\documentclass[pdflatex,sn-mathphys-num]{sn-jnl}

\usepackage{graphicx}%
\usepackage{multirow}%
\usepackage{amsmath,amssymb,amsfonts}%
\usepackage{amsthm}%
\usepackage{mathrsfs}%
\usepackage[title]{appendix}%
\usepackage{xcolor}%
\usepackage{textcomp}%
\usepackage{manyfoot}%
\usepackage{booktabs}%
\usepackage{longtable}%
\usepackage{array}%
\usepackage{algorithm}%
\usepackage{algorithmicx}%
\usepackage{algpseudocode}%
\usepackage{listings}%
\usepackage[T1]{fontenc}
\usepackage{lmodern}
\usepackage{amsmath,amssymb,amsthm,bm}
\usepackage{microtype}
\usepackage{tikz}
\usetikzlibrary{arrows.meta,positioning}

\theoremstyle{thmstyleone}%
\newtheorem{theorem}{Theorem}
\newtheorem{proposition}{Proposition}%
\newtheorem{lemma}{Lemma}%

\theoremstyle{thmstyletwo}%
\newtheorem{remark}{Remark}%

\theoremstyle{thmstylethree}%
\newtheorem{definition}{Definition}%
\newtheorem{assumption}{Assumption}%

\begin{document}

\title[Hankel-Koopman Finite-Horizon Energy Decomposition of Coupled Experimental Data: A Three-Phase Data-Driven Twin Forecasting  Framework]{Hankel-Koopman Finite-Horizon Energy Decomposition of Coupled Experimental Data: A Three-Phase Data-Driven Twin Forecasting Framework}


\author*[1]{\fnm{Diana A.} \sur{Bistrian}}\email{diana.bistrian@upt.ro}
\author[1]{\fnm{Marcel} \sur{Topor}}\email{marcel.topor@upt.ro}



\affil*[1]{\orgdiv{Department of Electrical Engineering and Industrial Informatics}, \orgname{University Politehnica Timisoara}, \orgaddress{\city{Timisoara}, \postcode{300006}, \state{Romania}}}



\abstract{This paper introduces a unified three-phase data-driven twin
framework for finite-horizon forecasting of physical quantities from coupled
experimental measurements. The framework combines a new Hankel--Koopman
finite-horizon energy decomposition with orthogonal modes and an
inverse-calibrated multi-output nonlinear autoregressive model with exogenous
inputs. The nonlinear dynamics are identified and simulated in the Hankel
coefficient space over a prescribed calibration window. Candidate models are
then evaluated after anti-diagonal recovery and reconstruction of the
corresponding trajectories in the physical measurement space. The resulting
simulated channel trajectories serve as exogenous inputs to a recursive
forecasting model for the quantity of interest. In this way, the proposed
framework integrates reduced-order representation, nonlinear dynamical
identification, measurement reconstruction, and explainable forecasting.
The mathematical analysis specializes Koopman delay-coordinate theory to the
serialized multichannel Hankel lift and establishes its compatibility with
the shifted Hankel representation. It also proves the orthogonality of the
modes and the finite-horizon modal energy decomposition, together with the
existence, uniqueness, inverse stability, and forward stability of the
identified models under explicitly stated hypotheses. In a solar-power-plant
case study, the framework yields consistently high correlations and low
relative errors across all considered forecast horizons, demonstrating its
ability to preserve the temporal evolution of the quantity of interest during
recursive forecasting.}

\keywords{Hankel-Koopman finite-horizon energy decomposition, Simulation of dynamical systems, Operator-theoretic methods,  Forecasting experimental data, Artificial neural networks and deep learning}


\pacs[MSC Classification]{37M10, 37M05, 93B28, 93B30, 68T07}

\maketitle

\section*{Mathematical Notation and Abbreviations}

The principal  mathematical symbols and abbreviations used throughout
this work are summarized in Table~\ref{tab:key_notations}.

\begingroup
\small
\renewcommand{\arraystretch}{1.12}
\setlength{\LTpre}{6pt}
\setlength{\LTpost}{12pt}
\begin{longtable}{@{}>{\raggedright\arraybackslash}p{0.28\textwidth}
                       >{\raggedright\arraybackslash}p{0.66\textwidth}@{}}
\caption{Key mathematical notation and abbreviations used throughout the article.}
\label{tab:key_notations}\\
\toprule
\textbf{Symbol or abbreviation} & \textbf{Description}\\
\midrule
\endfirsthead
\multicolumn{2}{@{}l}{\tablename~\thetable\ (continued)}\\[2pt]
\toprule
\textbf{Symbol or abbreviation} & \textbf{Description}\\
\midrule
\endhead
\midrule
\multicolumn{2}{r@{}}{\emph{Continued on next page}}\\
\endfoot
\bottomrule
\endlastfoot
$N$, $t_k$, $\Delta t$ &
Number of sampling instants on the finite modeling horizon, $k$th sampling
instant, and uniform sampling interval.\\
$\mathcal{T}_{\mathrm{fin}}$, $T_{\mathrm{fin}}$ &
Finite modeling horizon and its duration; the complete experimental record is
indexed by $\mathcal T_{\mathrm{fin}}$.\\
$N_Q$, $\mathcal T_{\mathrm{obs}}$, $T_{\mathrm{obs}}$ &
Number of paired quantity--channel identification samples, finite observation
horizon satisfying $\mathcal T_{\mathrm{obs}}\subseteq
\mathcal T_{\mathrm{fin}}$, and its duration.\\
$n_{\mathrm s}=N_Q+1$, $H$, $T_{\mathrm{FH}}=H\Delta t$,
$\mathcal T_H$ &
First calibration-sample index, number of calibration intervals,
calibration duration, and finite calibration window satisfying
$\mathcal T_H\subseteq\mathcal T_{\mathrm{fin}}$,
$\mathcal T_{\mathrm{obs}}\cap\mathcal T_H=\varnothing$, and
$\max\mathcal T_{\mathrm{obs}}<\min\mathcal T_H$.\\
$N_{\mathrm f}$, $T_{\mathrm f}=N_{\mathrm f}\Delta t$,
$\mathcal T_{\mathrm{for}}$ &
Number of recursively forecast samples, forecast lead time, and finite
forecast horizon satisfying
$\mathcal T_{\mathrm{for}}\subseteq\mathcal T_H$.\\
$\mathbb{F}=\mathbb{R}$ or $\mathbb{C}$, $m$ &
Scalar field of the experimental data and reduced representations, and number
of measured channels.\\
$\bm{y}_k$, $\bm{\varepsilon}_k$, $\bm{Y}$ &
Measured vector, measurement noise and unresolved effects, and snapshot matrix
$\bm{Y}=[\bm{y}_1,\ldots,\bm{y}_N]$.\\
$\bar{\bm{y}}$, $\bm{x}_k$, $\bm{X}$ &
Reference state, centered measurement
$\bm{x}_k=\bm{y}_k-\bar{\bm{y}}$, and centered snapshot matrix.\\
$\bm 1_N$, $\bm 1_{N_{\mathcal I}}$ &
All-ones column vectors of lengths $N$ and $N_{\mathcal I}$, respectively;
their products with $\bar{\bm y}$ repeat the reference state across the
modeling or calibration samples.\\
$(\cdot)^{\mathrm H}$, $(\cdot)^{\mathsf T}$, $(\cdot)^{\dagger}$ &
Conjugate (Hermitian) transpose, transpose, and Moore--Penrose pseudoinverse,
respectively.\\
$r$, $\bm\Psi_r$, $\bm\psi_j$ &
Retained lifted dimension, matrix of data-derived lifted modes, and its $j$th
mode.\\
$a_j(\ell)$, $\bm a_\ell$, $\bm A_r$ &
Scalar modal coefficient, reduced-coordinate vector at the $\ell$th
serialized-delay position, and coefficient matrix.\\
$\mathcal M_r$, $n_{\mathrm{mem}}$, $\bm\theta$, $\bm\eta_\ell$ &
Data-driven reduced evolution law, its memory length and identified
parameters, and reduced-model discrepancy.\\
$\mathcal{E}_r$, $\mathrm{CR}_r$ &
Lifted finite-horizon reconstruction error and nominal factor-storage
compression ratio of an $r$-dimensional model.\\
$\mathcal{S}$, $\bm{s}_k$, $\bm{F}_{\Delta t}$ &
Unknown state space, state at $t_k$, and one-step nonlinear evolution map.\\
$\mathcal S_{\mathrm{ser}}$, $\bm\sigma_\ell$,
$\bm F_{\mathrm{ser}}$ &
Augmented channel-state space, its $\ell$th serialized state, and the
one-entry serialized shift map.\\
$\bm g$, $g_{\mathrm{ser}}$, $\bm h_q$ &
Centered physical measurement observable, scalar serialized observable, and
$q$-component delay observable.\\
$\mathcal G_{\mathrm{ser}}$, $\mathcal K_{\mathrm{ser}}$,
$\mathcal K_{\mathrm{ser}}^{(q)}$ &
Invariant observable space of the serialized map, its scalar Koopman
composition operator, and componentwise action on $q$-vector observables.\\
$\mathscr D_q$, $\mathcal G_q^H$ &
Serialized Hankel delay-lifting operator on observables and the finite
delay-observable space generated by $g_{\mathrm{ser}}$.\\
$\varphi_j$, $\lambda_j$, $\bm{\Phi}_j$ &
Serialized Koopman eigenfunction, its eigenvalue, and the associated lifted
Koopman mode.\\
$N_H=mN$, $\bm x_H=\operatorname{vec}(\bm X)$ &
Length of the serialized centered record and its columnwise vectorization;
the $m$ channel values at each sampling instant remain consecutive.\\
$q$, $K=N_H-q+1$, $\mathscr L_{q,K}$,
$\mathscr H_{q,K}$ &
Imposed Hankel delay depth, number of Hankel columns, injective Hankel lifting
operator, and its $N_H$-dimensional range.\\
$\bm h_\ell$, $\bm H$, $\bm H_0$, $\bm H_1$ &
Delay vector of the serialized record, scalar-entry Hankel matrix, and the two
one-serialized-index-shifted Hankel matrices.\\
$p=\operatorname{rank}(\bm H_0)$ &
Data-supported lifted dimension returned by the economy-size SVD; reduced
order is defined by retaining $r<p$ modal triplets.\\
$\bm{U}_p$, $\bm{\Sigma}_p$, $\bm{V}_p$ &
Economy-size SVD factors of $\bm{H}_0$ restricted to its positive singular
spectrum.\\
$\widetilde{\bm{K}}_p$ &
$p\times p$ finite-data least-squares approximation of the Koopman operator
on the data-supported delay subspace.\\
$\widetilde\lambda$, $\bm v$ &
Eigenvalue and eigenvector of the finite-dimensional Koopman approximation
$\widetilde{\bm K}_p$.\\
$\bm K_H$ &
Matrix representation of the exact Koopman restriction in Hankel delay
coordinates.\\
$\bm{G}_p=\widetilde{\bm{K}}_p^{\mathrm H}\widetilde{\bm{K}}_p$ &
Self-adjoint positive-semidefinite Koopman energy operator.\\
$\mu_j$, $\bm{q}_j$, $\bm{Q}$, $\bm{\Lambda}_G$ &
Eigenvalues and orthonormal eigenvectors of $\bm{G}_p$, the eigenvector matrix,
and the diagonal eigenvalue matrix.\\
$\mathfrak{t}_j=(\mu_j,\bm{\phi}_j,\bm{c}_j^{\mathsf T})$ &
$j$th Koopman-energy modal triplet: energy-operator eigenvalue, associated
energy mode, and its temporal coefficient row.\\
$\bm{\Phi}$, $\bm{\phi}_j$, $\bm{C}$,
$\bm{c}_j^{\mathsf T}$ &
Hankel--Koopman energy-mode matrix, its $j$th mode, delay-space coefficient
matrix, and corresponding temporal coefficient row.\\
$L$ &
Finite operator horizon over which energetic persistence is accumulated.\\
$P_j^{(L)}$, $\alpha_j$, $E_j^{(L)}$ &
Persistence factor, coefficient energy, and finite-horizon modal
energy of the $j$th Hankel--Koopman energy mode.\\
$\mathscr{J}_L(\bm{C})$ &
Accumulated quadratic Hankel--Koopman energy over operator horizon $L$.\\
$\pi$, $\rho_r^{(L)}$, $\eta$ &
Permutation that orders modes by decreasing energy, cumulative energy
fraction, and prescribed energy-retention level.\\
$\bm{\Phi}_r$, $\bm{C}_r$, $\widehat{\bm{H}}_r$ &
First $r$ energy-ranked modes, their coefficient matrix, and the corresponding
reduced Hankel reconstruction.\\
$\bm{\Lambda}_{G,r}$, $\mathfrak{T}_r$ &
Selected energy-eigenvalue matrix and the rank-$r$ Koopman-energy candidate
$\mathfrak{T}_r=(\bm{\Lambda}_{G,r},\bm{\Phi}_r,\bm{C}_r)$.\\
$\varepsilon_r$, $\gamma_r$, $\chi_r=r/p$ &
Relative Hankel reconstruction error, matrix cosine similarity, and
dimension ratio for candidate order $r$.\\
$\mathcal{R}$, $\mathcal{R}_{\mathrm{adm}}$ &
Set of candidate triplet cardinalities (reduced dimensions) and its
computationally admissible subset.\\
$\bm{F}(r)$, $F_1(r)$, $F_2(r)$, $\beta_{\mathrm{dim}}$ &
Vector-valued Pareto objective, its two components, and the weak
dimensionality-penalty coefficient.\\
$\mathcal{P}$, $\mathcal{C}$ &
Pareto-optimal set and the final candidate set used in the scalar selection
problem.\\
$r_{\min}$, $r_{\max}$, $r_{\mathrm{tar}}$ &
Minimum, maximum, and preferred reduced dimensions used in the soft
dimension-selection rule.\\
$J(r)$, $\alpha_{\mathrm{dim}}$, $r^\star$ &
Final normalized selection score, dimension-preference weight, and cardinality
of the selected Koopman-energy candidate.\\
$\mathcal{J}_\ell$, $\nu_\ell$ &
Scalar Hankel anti-diagonal index set associated with serialized entry $\ell$
and its multiplicity $\nu_\ell=|\mathcal J_\ell|$.\\
$\widehat x_{H,\ell}$, $\widehat{\bm X}$,
$\widehat{\bm Y}$ &
Recovered serialized centered entry, centered channel matrix obtained by
inverse reshaping, and recovered measurement matrix after restoring the
reference state.\\
$T_{\mathrm{start}}$, $\mathcal I_H$, $N_{\mathcal I}$,
$N_{H,\mathcal I}$,
$K_{\mathcal I}$ &
First calibration time, global calibration-index set, number of physical
samples, length
$N_{H,\mathcal I}=mN_{\mathcal I}$ of the local serialized record, and number
$K_{\mathcal I}=N_{H,\mathcal I}-q+1$ of local Hankel columns, respectively.
The subscript $\mathcal I$ denotes
restriction to $\mathcal I_H$.\\
$\bm{Y}_{\mathcal I}$, $\bm{H}_{\mathcal I}$,
$\bm{C}_{\mathcal I}$, $\bm{c}_{\mathcal I,k}$ &
Measured data, serialized-data Hankel matrix, selected coefficient matrix, and temporal
coefficient vector associated with the finite calibration window.\\
$\mathscr C_{\mathcal I}$, $\mathscr H_{\mathcal I}$,
$\mathscr Y_{\mathcal I}$ &
Hankel coefficient space, local Hankel-matrix space, and physical measurement
space associated with the calibration window $\mathcal T_H$, respectively.\\
$\widetilde{\bm C}_{\mathcal I}(\bm B_{\bm\alpha})$,
$\widetilde{\bm c}_{\mathcal I,k}$ &
Free-run coefficient matrix
$[\widetilde{\bm c}_{\mathcal I,1},\ldots,
\widetilde{\bm c}_{\mathcal I,K_{\mathcal I}}]$ and its $k$th coefficient
vector, obtained by fixing the initial coefficient vectors to the prescribed
data-derived values and applying the coupled NLARX model recursively in
$\mathscr C_{\mathcal I}$.\\
$\bm{\alpha}=(n_a,n_b,n_k)$, $\ell_{\bm{\alpha}}$,
$d_{\bm{\alpha}}$, $p_{\bm{\alpha}}$, $M_{\bm\alpha}$ &
NLARX order triple, required initial-history length, stacked regressor-input
dimension, nonlinear feature dimension, and number of observation-driven
transitions, respectively.\\
$\bm{\phi}_{\bm{\alpha}}$, $\bm{\gamma}_{\bm{\alpha},k}$,
$\bm{B}_{\bm{\alpha}}$,
$\mathbb{K}_{\bm{\alpha}}$ &
Fixed nonlinear feature map, NLARX regressor, multi-output parameter matrix,
and its admissible parameter set.\\
$\mathscr{S}_{\mathcal I,\bm{\alpha}}$,
$\mathscr{A}_{\mathcal I}$,
$\mathscr{R}_{\mathcal I,\bm{\alpha}}$ &
Recursive coefficient simulator, scalar anti-diagonal recovery followed by
inverse reshaping, and complete finite-window reconstruction operator in the
experimental variables.\\
$\lambda_{\mathrm{reg}}$,
$\mathcal{J}_{\mathcal I,\lambda_{\mathrm{reg}}}$ &
Regularization parameter and physical-space inverse-calibration functional
evaluated after modal and anti-diagonal reconstruction.\\
$\mathcal J_{\mathcal I,0}$, $\rho_{\mathcal I}$, $L_{\mathcal I}$ &
Unregularized reconstruction-error term, bound on its possible negative
curvature, and sensitivity bound of its parameter gradient to perturbations
of the calibration data.\\
$\bm{\Gamma}_{\mathcal I,\bm{\alpha}}$,
$\bm{C}^{+}_{\mathcal I,\bm{\alpha}}$,
$\mathcal Q_{\mathcal I,\bm\alpha}$,
$\bm B_{\lambda,\bm\alpha}$ &
Observed-data-driven nonlinear regressor matrix and corresponding target coefficient matrix, fixed-structure Tikhonov functional, and its unique minimizer.\\
$\mathfrak{A}_{\mathcal I}$, $S_{\mathrm{Tik}}$,
$\bm f_{\mathcal I}$, $\mathfrak{P}_{\mathcal I}$,
$\{w_j\}_{j=1}^{4}$, $\Psi_{\mathcal I}$ &
Finite family of feasible NLARX structures, Tikhonov structure score, vector
of post-reconstruction physical-space selection objectives, Pareto-optimal subset, positive
decision weights, and final Pareto decision score.\\

$\widehat{\bm U}_{\mathrm{for}}$ &
Reconstructed physical experimental-channel inputs supplied to the recursive
quantity-of-interest model on $\mathcal T_{\mathrm{for}}$.\\

$Q_{\mathrm I}$, $q_k$, $\xi_k$ &
Channel-dependent scalar quantity-of-interest, its sampled value, and
unresolved quantity-response error.\\
$\mu_Q$, $\sigma_Q$, $\bm\mu_U$, $\bm\sigma_U$ &
Quantity-of-interest and channel normalization statistics on the finite
observation horizon.\\
$\bm\beta=(n_a^Q,n_b^Q,n_k^Q)$,
$\ell_{\bm\beta}$, $d_{\bm\beta}$, $p_{\bm\beta}$ &
Quantity-of-interest NLARX order triple, required history length, stacked
regressor dimension, and nonlinear feature dimension.\\
$\bm\theta_{\lambda_Q,\bm\beta}$, $\mathfrak B_Q$,
$\mathfrak P_Q$, $S_Q$ &
Regularized scalar NLARX parameter vector, finite feasible order family,
Pareto-optimal subset, and correlation-primary final decision score.\\
$\mathcal F_{Q,h}^{\star}$,
$\mathcal R_{Q/\mathrm{for}}=N_Q/N_{\mathrm f}$ &
Selected one-step numerical operator for the quantity-of-interest and the
identification-to-forecast ratio, respectively.\\

ArDMD & Adaptive Randomized Dynamic Mode Decomposition.\\
cDMD & Compressed Dynamic Mode Decomposition.\\
csDMD & Compressed-sensing Dynamic Mode Decomposition.\\
DMD & Dynamic Mode Decomposition.\\
DMDc & Dynamic Mode Decomposition with control.\\
DTM & Data-Twin Model.\\
EDMD & Extended Dynamic Mode Decomposition.\\
Hankel-DMD & Hankel Dynamic Mode Decomposition.\\
HAVOK & Hankel Alternative View of Koopman.\\
HKFED & Hankel--Koopman Finite-horizon Energy Decomposition.\\
KROD & Randomized Koopman Orthogonal Decomposition.\\
MAE & Mean absolute error.\\
mpEDMD & Measure-preserving Extended Dynamic Mode Decomposition.\\
mrDMD & Multiresolution Dynamic Mode Decomposition.\\
NLARX & Nonlinear autoregressive model with exogenous inputs.\\
optDMD & Optimized Dynamic Mode Decomposition.\\
piDMD & Physics-informed Dynamic Mode Decomposition.\\
PV & Photovoltaic.\\
rDMD & Randomized Dynamic Mode Decomposition.\\
ResDMD & Residual Dynamic Mode Decomposition.\\
RMSE & Root mean square error.\\
SVD & Singular value decomposition.\\
\end{longtable}
\endgroup

\section{Introduction}\label{sec:introduction}

An experimental record rarely presents its dynamics in a ready-made form. Each
sensor observes one part of the process, and the behavior of interest emerges
from the way these measurements evolve together. Across a finite experimental
record, that behavior may be shared among several channels, distributed across
different time scales, and influenced by effects that become visible only after
a delay. The central question is therefore how to turn a finite collection of
measurements into a compact dynamical description that can be interpreted,
advanced in time, returned to the measurement space, and ultimately used for
forecasting.

Several well-established ideas contribute to this task. Koopman operator theory \cite{Koopman1931,KoopmNeum1932,Koopman1936,MezicBanaszuk2004,Mezic2005,MezicKoop2021,Mezic2022}
describes nonlinear dynamics through the linear evolution of observable
quantities. Reduced-order modeling \cite{Ruan2026,Tsai2026,Codega2026,Iliescu2025,Quarteroni2024,Iliescu2022,Ahmed2020,Noack2013,Tissot2014}
provides a compact set of coordinates, and nonlinear system identification \cite{Gonnella2026,Silva2025,Caforio2025,Alla2017,Rowley2009}
supplies an evolution law for those coordinates.
Numerical techniques for modal decomposition like Proper Orthogonal Decomposition (POD) and Dynamic Mode Decomposition (DMD) \cite{Schmid2008,Schmid2010,Rowley2010,ProctorBruntonKutz2016,colbrook2026,Lee2013,Cueto2014,Ivagnes2026,Hajisharifi2026} extract coherent patterns and
their temporal signatures directly from data. Hankel representations \cite{Gregson1988,ArbabiMezic2017,BruntonEtAl2017} bring the
recent history of the measurements into the analysis,  while newly AI-driven methodologies for digital twins \cite{QuarteroniBook2025,Ren2026,DellaPia2026,Tang2026,Guo2026,Gan2026,Veneziani2026,Eshaghi2024}
may connect these elements to a measured physical process.

Situated at the intersection of these areas, the present work follows the complete path from experimental measurements to finite-horizon forecasting.
To place the novel construction in context, the following discussion traces the main ideas that led to it.
Following the organization proposed by Colbrook \cite{Colbrook2024}, the modal decomposition algorithms are viewed through
three broad perspectives: regression, Galerkin approximation, and preservation
of dynamical structure.

\subsection{Foundational Developments in Data-Driven Dynamical Modeling}
\label{subsec:introduction_state_of_art}

The modal decomposition research advanced with Koopman's 1931 observation \cite{Koopman1931} that a dynamical system can be
studied through the evolution of functions defined on its state space. These functions are called observables because they represent
quantities that can be evaluated, and often measured, along a trajectory. The
state dynamics may be nonlinear; instead the operator that advances the observables is
linear. Koopman and von Neumann \cite{KoopmNeum1932} soon extended this viewpoint to systems with
continuous spectra. The linear description lives in a
function space that is generally infinite dimensional, which makes its spectral
information both rich and computationally challenging.

In 2005, Mezi\'{c} \cite{Mezic2005} developed the modern spectral interpretation of this idea
and connected Koopman eigenvalues, eigenfunctions, and modes with nonlinear
dynamics and model reduction. Rowley, Mezi\'{c}, Bagheri,
Schlatter, and Henningson \cite{Rowley2009} showed how this perspective could reveal coherent
structures in nonlinear fluid flows. In the period 2008-2010, Schmid and Sesterhenn \cite{Schmid2008,Schmid2010} introduced 
Dynamic Mode Decomposition (DMD) as a systematic numerical algorithm for extracting
such structures from numerical or experimental snapshots.
The method associates each spatial mode with a temporal behavior, so that
oscillation, growth, and decay can be read directly from data.

The main idea of DMD consists in organizing data in two collections of measurements. The
first contains the centered snapshots up to time $t_{N-1}$, and the second
contains their one-step successors:
\begin{equation}
    \bm{X}_0=
    \begin{bmatrix}\bm{x}_1&\cdots&\bm{x}_{N-1}\end{bmatrix},
    \qquad
    \bm{X}_1=
    \begin{bmatrix}\bm{x}_2&\cdots&\bm{x}_{N}\end{bmatrix}.
    \label{eq:introduction_shifted_snapshot_pair}
\end{equation}
Classical DMD seeks a linear map that carries the earlier snapshots in
$\bm X_0$ to the later snapshots in $\bm X_1$. Its modes describe coherent
patterns in the measurements, and its eigenvalues describe how those patterns
change from one sampling instant to the next. This simple construction opened
several natural directions of development.

One direction asks how the entire time record can contribute to the estimated
dynamics. Chen, Tu, and Rowley \cite{Chen2012} answered this question in 2012 with Optimized
Dynamic Mode Decomposition (optDMD), which treats the snapshots as a global
exponential fitting problem. Askham and Kutz \cite{AskhamKutz2018} later made this
formulation computationally practical through variable projection. A second direction concerns data that are available only
through a smaller number of measurements. Tu, Rowley, Luchtenburg, Brunton, and
Kutz \cite{Tu2014} provided an early link between compressed measurements and DMD. Brunton, Proctor, Tu, and Kutz \cite{BruntonProctorTuKutz2016} subsequently developed Compressed
Dynamic Mode Decomposition (cDMD) and Compressed-Sensing Dynamic Mode
Decomposition (csDMD), showing how full-state modes can be recovered from
compressed or spatially subsampled data.

Large snapshot matrices also motivated the use of randomized linear algebra.
The singular value decomposition (SVD), which identifies the dominant
low-dimensional range of the data, is often the most expensive step in the
classical computation. Erichson and Donovan \cite{ErichsonDonovan2016} introduced the randomized SVD in their 2016
Randomized Low-Rank Dynamic Mode Decomposition for motion detection and video
background separation. In 2017, Bistrian and Navon \cite{BistrianNavon2017}
introduced Adaptive Randomized Dynamic Mode Decomposition (ArDMD) into fluid-dynamics
reduced-order modeling through a non-intrusive model for two-dimensional flows. Erichson, Mathelin, Kutz, and Brunton \cite{ErichsonEtAl2019} then developed
the fully sketched rDMD algorithm in 2019: the main computation takes place in a
reduced-dimensional range, and the resulting modes are finally returned to the
original measurement space.

Other extensions respond to the structure of the experiment itself. When slow
and fast processes coexist, Multiresolution Dynamic Mode Decomposition (mrDMD)
organizes their modal content over a hierarchy of temporal windows, as proposed
by Kutz, Fu, and Brunton \cite{KutzFuBrunton2016}. When the measurements are
influenced by known inputs, Dynamic Mode Decomposition with Control (DMDc),
introduced by Proctor, Brunton, and Kutz \cite{ProctorBruntonKutz2016}, identifies the autonomous evolution
together with the effect of actuation.
Table~\ref{tab:introduction_dmd_regression} gathers these regression-oriented
developments and the questions they were designed to address.

\begingroup
\small
\renewcommand{\arraystretch}{1.12}
\setlength{\LTpre}{6pt}
\setlength{\LTpost}{10pt}
\begin{longtable}{@{}>{\raggedright\arraybackslash}p{0.15\textwidth}
                    >{\raggedright\arraybackslash}p{0.24\textwidth}
                    >{\raggedright\arraybackslash}p{0.25\textwidth}
                    >{\raggedright\arraybackslash}p{0.25\textwidth}@{}}
\caption{Regression-, compression-, multiresolution-, and control-oriented
members of the DMD class.}
\label{tab:introduction_dmd_regression}\\
\toprule
\textbf{Method} & \textbf{Pioneering authors and chronology} &
\textbf{Defining operation} & \textbf{Principal role}\\
\midrule
\endfirsthead
\multicolumn{4}{@{}l}{\tablename~\thetable\ (continued)}\\[2pt]
\toprule
\textbf{Method} & \textbf{Pioneering authors and chronology} &
\textbf{Defining operation} & \textbf{Principal role}\\
\midrule
\endhead
Classical DMD &
Koopman interpretation by Rowley, Mezi\'{c}, Bagheri, Schlatter, and Henningson
(2009) \cite{Rowley2009}; classical DMD by Schmid (2010) \cite{Schmid2010}. &
Reduced SVD of $\bm X_0$, projection of $\bm X_1\bm X_0^{\dagger}$, and
eigendecomposition of the projected operator. &
Extraction of coherent modes, growth or decay rates, and oscillation
frequencies from consecutive snapshots.\\

Optimized DMD (optDMD) &
Chen, Tu, and Rowley (2012) \cite{Chen2012}; variable-projection formulation by Askham and Kutz
(2018) \cite{AskhamKutz2018}. &
Joint exponential fitting of the complete snapshot sequence through nonlinear
variable projection. &
Global estimation of modal parameters from the available temporal record.\\

Compressed DMD (cDMD) and compressed-sensing DMD (csDMD) &
Compressed-measurement connection by Tu et al. (2014) \cite{Tu2014}; cDMD and csDMD by
Brunton, Proctor, Tu, and Kutz (2016)
\cite{BruntonProctorTuKutz2016}. &
DMD on compressed measurements followed by full-state mode recovery from
available snapshots or a sparse representation. &
Modal identification from compressed or spatially subsampled measurements.\\

Randomized DMD (rDMD) and Adaptive Randomized DMD (ArDMD)&
Erichson and Donovan (2016) \cite{ErichsonDonovan2016}; Bistrian and Navon (2017) \cite{BistrianNavon2017}; fully sketched rDMD by
Erichson, Mathelin, Kutz, and Brunton (2019) \cite{ErichsonEtAl2019}. &
Randomized low-rank range construction, spectral computation in that range,
and recovery of modes in measurement space. &
Efficient low-rank DMD for video data, fluid-flow reduced-order models, and
high-dimensional snapshot records.\\

Multiresolution DMD (mrDMD) &
Kutz, Fu, and Brunton (2016) \cite{KutzFuBrunton2016}. &
Recursive partition of the observation interval and hierarchical extraction
of modal content across temporal scales. &
Localization and separation of multiscale dynamical behavior.\\

DMD with control (DMDc) &
Proctor, Brunton, and Kutz (2016) \cite{ProctorBruntonKutz2016}. &
Augmentation of state snapshots with input history and simultaneous
identification of state and actuation operators. &
Identification of autonomous evolution and measured actuation.\\
\bottomrule
\end{longtable}
\endgroup

Regression between successive state snapshots is the interpretation on which DMD algorithm is based.
Koopman theory suggests a broader question: which observable quantities should
be followed so that the dynamics become easier to describe? Williams,
Kevrekidis, and Rowley \cite{Williams2015} addressed this question in 2015 through Extended Dynamic
Mode Decomposition (EDMD). EDMD evaluates a chosen
dictionary of nonlinear observables on the measured state and builds a Galerkin
approximation of the Koopman operator in the space spanned by that dictionary.
The choice of observables therefore becomes part of the model.

The observables may also include the recent history of a measurement. A delay
coordinate records the present value together with selected past values, so it
can reveal information that is only partially visible in a single snapshot.
When these delayed measurements are arranged in a Hankel matrix, the shift from
one column to the next represents the advance of the observation window.
Arbabi and Mezi\'{c} \cite{ArbabiMezic2017} placed Hankel Dynamic Mode Decomposition (Hankel-DMD) on an
ergodic-theoretic foundation and connected delay embedding with Krylov
subspaces and Koopman spectral computation. In the same
year, Steven L. Brunton, Bingni W. Brunton, Proctor, Kaiser, and Kutz \cite{BruntonEtAl2017} introduced
the Hankel Alternative View of Koopman (HAVOK). HAVOK
uses the leading delay coordinates to form an approximately linear model and
interprets a low-energy coordinate as a forcing term for intermittent behavior.

A finite observable dictionary produces a finite matrix approximation of an
operator that acts on an infinite-dimensional space. This observation leads to
another practical question: how can a computed spectral quantity be verified?
Colbrook and Townsend \cite{ColbrookTownsend2023} answered it through a residual framework that measures
the error associated with a candidate Koopman eigenvalue and eigenfunction. Colbrook, Ayton, and Sz\H{o}ke \cite{ColbrookAytonSzoke2023} developed Residual
Dynamic Mode Decomposition (ResDMD) for robust spectral computation in
fluid-dynamical applications. The required
residual is obtained from additional quadratic information computed from the
snapshots and supports verified approximations of spectra, pseudospectra, and
spectral measures. Table~\ref{tab:introduction_dmd_galerkin} brings together
these observable-based Galerkin, Hankel delay-coordinate, and residual developments.

\begingroup
\small
\renewcommand{\arraystretch}{1.12}
\setlength{\LTpre}{6pt}
\setlength{\LTpost}{10pt}
\begin{longtable}{@{}>{\raggedright\arraybackslash}p{0.15\textwidth}
                    >{\raggedright\arraybackslash}p{0.24\textwidth}
                    >{\raggedright\arraybackslash}p{0.25\textwidth}
                    >{\raggedright\arraybackslash}p{0.25\textwidth}@{}}
\caption{Galerkin-, Hankel-, and residual-oriented members of the DMD class.}
\label{tab:introduction_dmd_galerkin}\\
\toprule
\textbf{Method} & \textbf{Pioneering authors and chronology} &
\textbf{Defining operation} & \textbf{Principal role}\\
\midrule
\endfirsthead
\multicolumn{4}{@{}l}{\tablename~\thetable\ (continued)}\\[2pt]
\toprule
\textbf{Method} & \textbf{Pioneering authors and chronology} &
\textbf{Defining operation} & \textbf{Principal role}\\
\midrule
\endhead
Extended DMD (EDMD) &
Williams, Kevrekidis, and Rowley (2015) \cite{Williams2015}. &
Evaluation of a nonlinear observable dictionary on paired snapshots and
Galerkin approximation through empirical Gram and cross-correlation matrices. &
Finite-dimensional approximation of Koopman action in a prescribed observable
space.\\

Hankel-DMD &
Arbabi and Mezi\'{c} (2017) \cite{ArbabiMezic2017}. &
Construction of time-delay observables and application of DMD or EDMD to a
shifted Hankel pair. &
Koopman spectral analysis in a delay-enriched Krylov-type observable space.\\

Hankel Alternative View of Koopman (HAVOK) &
Steven L. Brunton, Bingni W. Brunton, Proctor, Kaiser, and Kutz (2017)
\cite{BruntonEtAl2017}. &
Hankel SVD followed by identification of a linear model for the leading delay
coordinates, with a selected low-energy coordinate acting as forcing. &
Structured representation of intermittent and chaotic dynamics.\\

Residual DMD (ResDMD) &
Colbrook and Townsend (2023) \cite{ColbrookTownsend2023}; fluid-dynamical formulation by Colbrook, Ayton,
and Sz\H{o}ke (2023) \cite{ColbrookAytonSzoke2023}. &
Augmentation of EDMD with snapshot-derived quadratic information and evaluation
of residuals associated with the full Koopman operator. &
Verification of spectral candidates and approximation of spectra,
pseudospectra, and spectral measures.\\
\bottomrule
\end{longtable}
\endgroup

 Many physical systems come with valuable structural information. Conservation laws, symmetries,
local interactions, causality, and invariance under spatial shifts can all guide
the fitted evolution. Baddoo, Herrmann, McKeon, Kutz, and Brunton \cite{BaddooEtAl2023} incorporated
such information through Physics-Informed Dynamic Mode Decomposition (piDMD)
in 2023. Their formulation restricts the DMD regression to
a matrix manifold that represents the selected physical property.

Measure preservation provides a particularly important example. For a
measure-preserving system, the Koopman operator acts as an isometry in the
appropriate inner product. Colbrook \cite{Colbrook2023} built this property into
Measure-Preserving Extended Dynamic Mode Decomposition (mpEDMD), producing
finite-dimensional approximations with convergent spectral behavior. In 2025, Bistrian \cite{Bistrian2025} connected structure, reduction, and interpretability through the Randomized Koopman Orthogonal Decomposition (KROD). This
method combines randomized projection, innovative orthogonal Koopman modes, Pareto-based
selection of the reduced dimension, and explainable deep learning within a
data-driven twin methodology. These structure-preserving
and interpretable developments are summarized in
Table~\ref{tab:introduction_dmd_structure}.

\begingroup
\small
\renewcommand{\arraystretch}{1.12}
\setlength{\LTpre}{6pt}
\setlength{\LTpost}{10pt}
\begin{longtable}{@{}>{\raggedright\arraybackslash}p{0.15\textwidth}
                    >{\raggedright\arraybackslash}p{0.24\textwidth}
                    >{\raggedright\arraybackslash}p{0.25\textwidth}
                    >{\raggedright\arraybackslash}p{0.25\textwidth}@{}}
\caption{Structure-preserving and explainable data-driven developments related
to the DMD class.}
\label{tab:introduction_dmd_structure}\\
\toprule
\textbf{Method} & \textbf{Pioneering authors and chronology} &
\textbf{Defining operation} & \textbf{Principal role}\\
\midrule
\endfirsthead
\multicolumn{4}{@{}l}{\tablename~\thetable\ (continued)}\\[2pt]
\toprule
\textbf{Method} & \textbf{Pioneering authors and chronology} &
\textbf{Defining operation} & \textbf{Principal role}\\
\midrule
\endhead
Physics-informed DMD (piDMD) &
Baddoo, Herrmann, McKeon, Kutz, and Brunton (2023) \cite{BaddooEtAl2023}. &
Solution of the DMD regression on a matrix manifold encoding selected physical
properties. &
Preservation of conservation laws, symmetry, localization, causality, or shift
equivariance in the identified operator.\\

Measure-preserving EDMD (mpEDMD) &
Colbrook (2023) \cite{Colbrook2023}. &
Construction of the empirical Gram metric and enforcement of its preservation
by the observable-space approximation. &
Isometric Koopman approximation and convergent spectral computation for
measure-preserving systems.\\

Randomized Koopman Orthogonal Decomposition (KROD) and explainable deep learning &
Bistrian (2025) \cite{Bistrian2025}. &
Randomized orthogonal projection, construction of orthogonal Koopman modes,
Pareto selection of the reduced dimension, and nonlinear autoregressive
identification of modal dynamics with exogenous inputs. &
Interpretable nonlinear evolution and adaptive prediction of reduced-order
data-driven twin models\\
\bottomrule
\end{longtable}
\endgroup

Seen together, the three classes described above provide increasingly expressive
ways to read dynamics from data. Dynamic Mode Decomposition supplies modes and
their temporal signatures. Extended and Hankel formulations enrich the
quantities being observed. Residual and structure-preserving formulations add
verification and physical consistency. At this point, the data have acquired a
compact set of coordinates. The next advancement in modeling data concerns motion in those
coordinates: how the reduced state evolves, how external measurements influence
that evolution, and how the resulting model can support prediction.

Accordingly, we extend the organizational perspective of Colbrook \cite{Colbrook2024} beyond modal decomposition, to encompass nonlinear input--output identification, reduced-order prediction, Koopman-autoencoder architectures, and data-driven twin modeling.

Reduced-order modeling provides the computational setting to the domain of nonlinear system identification.
Seminal work of Ljung \cite{Ljung1999}
established the general framework for the prediction and identification of
dynamical input-output models. Nelles \cite{Nelles2001} developed nonlinear
identification architectures based on polynomial, neural-network, fuzzy, and
related representations, and Billings \cite{Billings2013} gave a comprehensive
treatment of nonlinear autoregressive moving-average models with exogenous
inputs. Within this family, a Nonlinear Autoregressive model
with Exogenous inputs (NLARX) expresses the current output as a nonlinear
function of past outputs and past external inputs. The delayed values provide
memory, the external inputs transmit information from coupled variables, and
recursive simulation advances the model across a finite horizon. These features
make NLARX models well suited to the evolution of reduced temporal coefficients
and to the prediction of quantities that depend on several measured channels.

Benner, Gugercin, and Willcox \cite{BennerGugercinWillcox2015} surveyed projection-based methods that use
low-dimensional trial and test spaces for repeated simulation, optimization,
control, and uncertainty quantification. In a
data-driven setting, the relevant subspace is learned directly from the
available snapshots, and Hankel lifting can enrich that subspace with temporal
history. The reduced representation supplies the model coordinates, and a predictive method then specifies how those coordinates evolve.

A complementary advancement in nonlinear system identification emerged from
the integration of deep learning with Koopman operator theory. Takeishi,
Kawahara, and Yairi \cite{Takeishi2017} introduced neural-network learning
of Koopman-invariant observables, while Lusch, Kutz, and Brunton
\cite{Lusch2018} and Otto and Rowley \cite{Otto2019} developed
autoencoder-based architectures that combine nonlinear coordinate
transformations with approximately linear latent dynamics. Azencot, Erichson,
Lin, and Mahoney \cite{Azencot2020} subsequently introduced the consistent
Koopman autoencoder, which couples the forward and backward latent evolution
operators to support robust multi-step forecasting.

The final connection brings the reduced description back to the physical
process represented by the measurements. This connection lies at the heart of digital-twin modeling. In a published AIAA contribution from January 2020, Kapteyn, Knezevic, and Willcox  \cite{KapteynKnezevicWillcox2020AIAA} constructed a predictive digital twin from a
library of component-based reduced-order models representing pristine and
damaged structural states. An optimal classification tree processed sensor
measurements and identified the reduced model that best represented the current
asset. The physics-based reduced models
supplied the predictive dynamics; interpretable machine learning supplied the
model-selection and updating rule.

Kapteyn, Knezevic, Huynh, Tran, and Willcox \cite{KapteynEtAl2022} developed the corresponding
Bayesian formulation in an article first published online on 20 May 2020 and
later assigned to a 2022 journal issue. Bayesian state estimation used the
observations to determine the most plausible candidates within the
component-based reduced-model library. In 2021, Kapteyn,
Pretorius, and Willcox \cite{KapteynPretoriusWillcox2021} represented the coupled digital-twin system through a
probabilistic graphical model that unified state evolution, observations,
updating, prediction, and decisions. In 2022,
Kapteyn and Willcox \cite{KapteynWillcox2022Sensing} extended this line to sensor placement and dynamic sensor
scheduling through predictive reduced-order models and interpretable
classification trees.
Across this sequence, the reduced physics-based models carry the predictive
dynamics, and data-driven inference estimates the physical-asset state, updates
the twin, and supports sensing decisions.

In contrast to the architecture of Kapteyn et al.~\cite{KapteynEtAl2022}, in which
observational data are used to estimate the state of a physical asset and to
update its digital twin by selecting appropriate physics-based reduced models
from a predefined library, Bistrian~\cite{Bistrian2022} introduced a distinct,
fully data-driven route in 2022. The Data-Twin Model (DTM) was defined as a
reduced-complexity representation constructed to mirror the behavior of the
original process. Within this non-intrusive framework, the reduced data-twin
representation was extracted directly from data snapshots through randomized
Dynamic Mode Decomposition. A neural-network Nonlinear AutoRegressive model
with eXogenous inputs (NLARX) was then identified to learn the nonlinear
evolution law of the retained temporal coefficients. Recursive simulation
advanced these reduced coordinates in time, after which modal reconstruction
recovered the corresponding high-fidelity response in the original data
space. Thus, machine learning did not merely estimate the state of an existing
physics-based twin; it supplied and recursively simulated the evolution law of
the reduced data-driven state itself. This work introduced an early integration of randomized modal reduction, 
neural-network identification, recursive reduced-coordinate simulation,
and full-state reconstruction within a non-intrusive reduced-order data-twin
framework for fluid dynamics.

 The resulting sequence
from reduced coordinates to nonlinear prediction and data-twin construction
forms a fourth methodological class, summarized in
Table~\ref{tab:introduction_prediction_identification}.

\begingroup
\small
\renewcommand{\arraystretch}{1.12}
\setlength{\LTpre}{6pt}
\setlength{\LTpost}{10pt}
\begin{longtable}{@{}>{\raggedright\arraybackslash}p{0.15\textwidth}
                    >{\raggedright\arraybackslash}p{0.24\textwidth}
                    >{\raggedright\arraybackslash}p{0.25\textwidth}
                    >{\raggedright\arraybackslash}p{0.25\textwidth}@{}}
\caption{Selected foundational developments in nonlinear input-output
identification, reduced-order prediction, Koopman-autoencoder architectures,
and data-driven twin modeling.}
\label{tab:introduction_prediction_identification}\\
\toprule
\textbf{Method} & \textbf{Foundational authors and chronology} &
\textbf{Defining operation} & \textbf{Principal role}\\
\midrule
\endfirsthead
\multicolumn{4}{@{}l}{\tablename~\thetable\ (continued)}\\[2pt]
\toprule
\textbf{Method} & \textbf{Foundational authors and chronology} &
\textbf{Defining operation} & \textbf{Principal role}\\
\midrule
\endhead
Nonlinear dynamical input-output identification &
Ljung (1999) \cite{Ljung1999}, Nelles (2001) \cite{Nelles2001}, Billings (2013)
\cite{Billings2013}. &
Representation of the current output by a nonlinear map of delayed outputs and
delayed exogenous inputs, followed by recursive simulation. &
Identification and finite-horizon prediction of coupled reduced coordinates.\\

Projection-based reduced-order modeling &
 Benner, Gugercin, and Willcox (2015)
\cite{BennerGugercinWillcox2015}. &
Projection of high-dimensional dynamics onto low-dimensional trial and test
spaces. &
Efficient repeated simulation, optimization, control, and uncertainty
quantification.\\

Koopman-autoencoder modeling &
Takeishi, Kawahara, and Yairi (2017) \cite{Takeishi2017};
Lusch, Kutz, and Brunton (2018) \cite{Lusch2018};
Otto and Rowley (2019) \cite{Otto2019};
Azencot, Erichson, Lin, and Mahoney (2020) \cite{Azencot2020}. &
Neural-network learning of Koopman observables and nonlinear encoding-decoding
maps with approximately linear latent dynamics, subsequently extended through
forward-backward consistency. &
Low-dimensional representation, reconstruction, and multi-step forecasting of
nonlinear dynamical systems.\\

Data-driven physics-based digital twins &
Kapteyn, Knezevic, and Willcox (2020) \cite{KapteynKnezevicWillcox2020AIAA}; Kapteyn, Pretorius, and Willcox (2021) \cite{KapteynPretoriusWillcox2021}; Kapteyn, Knezevic, Huynh, Tran, and
Willcox (2022) \cite{KapteynEtAl2022}; 
Kapteyn and Willcox (2022)
\cite{KapteynWillcox2022Sensing}. &
Interpretable classification or Bayesian state estimation maps sensor
observations to candidate component-based reduced models; a probabilistic
graphical representation organizes digital-twin evolution and updating. &
Physics-based prediction, structural-state estimation, digital-twin updating,
and design of sensing strategies.\\

Data-Twin Models (DTMs) with temporal deep learning &
Bistrian (2022) \cite{Bistrian2022}. &
Non-intrusive randomized modal reduction followed by neural-network-based
NLARX identification and recursive simulation of the temporal coefficients. &
Reduced-order data-twin modeling in which machine learning efficiently supplies
the temporal evolution of high-fidelity dynamics.\\
\bottomrule
\end{longtable}
\endgroup

The pioneering developments summarized above established the principal
foundations of advanced Dynamic Mode Decomposition: data-driven modal
representation, observable enrichment, preservation of dynamical structure,
reduced-coordinate identification, and data-twin construction. Together, they
lead to a more demanding question: how can these elements be assembled into a
single mathematically well-posed workflow that begins with coupled experimental
measurements, identifies their nonlinear reduced evolution, returns that
evolution to the measurement space, and ends with a finite-horizon forecast of
a physical quantity-of-interest? This question motivates the unresolved
methodological gap examined in the present work.

\subsection{The Unresolved Methodological Gap}

At the level of finite data, the common outcome of modal decomposition is a
low-rank factorization of the measurement matrix into a modal basis and a
matrix of temporal coefficients. Equivalently, each measured state is
represented as a linear combination of modes multiplied by time-dependent
coefficients \cite{Holmes1996}. This representation reveals coherent structures, dominant
directions, and characteristic temporal signatures. Several members of the
DMD class also associate these coordinates with a finite-dimensional linear or
Koopman-based evolution operator. These achievements provide the reduced
coordinates required for modeling.

The identification of a closed nonlinear
evolution law for those coordinates forms a distinct subsequent task, which is not often addressed by researchers.
This distinction becomes particularly important for strongly nonlinear,
multichannel experimental data.
Thus, the passage from decomposition to forecasting
requires both a reduced representation and an identified nonlinear evolution
mechanism.

A broad forecasting literature treats measured quantities directly as time series and constructs future values from their recent histories and available inputs \cite{Hegazy2026,Sousa2026,Zito2025,Beletskaya2025,Eichholz2023}. Such methods are effective for many short-term prediction problems. For strongly nonlinear multichannel processes, however, direct forecasting may propagate noise and redundant information, while the relevant cross-channel interactions remain implicit in a high-dimensional input space. The selection of time delays, forecast inputs, and recursive prediction rules  becomes increasingly demanding, and small local errors may accumulate across the forecast horizon. These bottlenecks motivate the need for a reduced dynamical representation that preserves the dominant coupled information before forecasting a physical quantity-of-interest.

A substantial body of forecasting research employs deep-learning
architectures that learn internal spatiotemporal representations directly from
historical physical-state variables and exogenous driver data \cite{Napoles2026,Zhou2026,Lyu2025,Kong2025,Mojtahedi2025}. These representations can provide strong predictive capability, while the dynamical path from observations to forecasts is often distributed across the network architecture, its parameters, and the training procedure. This structure limits physical interpretation, mathematical analysis, and scientifically informed intervention. An explainable reduced-order framework can instead employ deep learning as an identification tool while keeping the reduced variables, their nonlinear evolution, their cross-channel coupling, and their contribution to the final forecast accessible for examination.

The unresolved gap consequently concerns a unified finite-horizon methodology
that connects four operations within one mathematically justified construction:
reduced-order decomposition of measurements,
identification and recursive simulation of their nonlinear dynamics in reduced-coordinate space, inverse reconstruction and candidate assessment in
the physical measurement space, and finally, forecasting
 a measurement-dependent physical quantity-of-interest. Such a methodology also
requires explicit conditions for existence, uniqueness, stability, model
selection, and finite-horizon propagation. These requirements define the gap
addressed by the twin modeling and forecasting framework developed in the present paper.

\subsection{Motivation for the Unified Three-Phase Framework}

The unified three-phase framework proposed in this paper is needed because the
analysis continues beyond the decomposition of experimental data and develops
a complete path from measurements to prediction. The first stage constructs a
reduced representation of coupled multichannel observations and identifies
their dominant energetic structures. The second stage equips the retained
temporal coefficients with an identified nonlinear evolution law and returns
the lifted-space dynamics to the measurement space,
where the reconstructed physical channels determine the inverse-calibration
scores. The third stage
transfers this information to the finite-horizon forecast of a physical
quantity-of-interest. These three modeling phases  therefore form a connected computational
methodology in which the output of each phase provides a mathematically defined
input to the next one.


The principal novelty of this work is the unified finite-horizon coupling of
three mathematically defined phases. Phase~I introduces the
\emph{Hankel--Koopman finite-horizon energy decomposition (HKFED)}, which
constructs orthogonal Hankel--Koopman energy directions and ranks linked
spectral--spatial--temporal triplets according to finite-horizon energy and
persistence. Phase~II introduces an \emph{inverse-calibrated coupled
multi-output NLARX methodology}, in which the nonlinear dynamics are identified
and simulated in Hankel coefficient space, while candidate selection is
performed after inverse Hankel reconstruction in the physical measurement
space. Phase~III uses an established finite-horizon recursive NLARX forecasting
structure, but introduces within the present workflow the transfer mechanism
by which the physical channel trajectories reconstructed in Phase~II become
the exogenous drivers of the quantity-of-interest forecast. To the best of the
authors' knowledge, this complete path from serialized multichannel
measurements, through HKFED reduction and inverse-calibrated nonlinear
dynamics, to channel-driven forecasting has not previously been formulated.
The principal methodological contribution therefore lies in the
mathematically explicit interfaces between the three phases and in their
assembly into a unified, reproducible data-driven twin methodology.

Within the proposed framework, deep learning identifies nonlinear dynamics
inside a structured reduced-order model. The reduced variables, cross-channel
coupling, and contributions to the forecast remain accessible to physical
interpretation and mathematical analysis through explicit interfaces among
decomposition, reduced-order simulation, measurement reconstruction, and
forecasting.

The remainder of this paper is organized as follows.
Section~\ref{sec:Mathback} presents the mathematical background, encompassing the
Hankel-lifted reduced-order modeling of finite-horizon experimental data and
the Koopman operator framework in the Hankel-lifted space.
Section~\ref{sec:computational_aspects} develops the computational aspects of the proposed methodology,
describes its three phases and associated algorithms, and supports each phase
with a mathematical analysis addressing the relevant existence, uniqueness,
and finite-horizon stability properties of the resulting computational models.
Section~\ref{Num_exp} reports the numerical experiments and evaluates the performance of
the complete framework. 
Finally, Section~\ref{conclusion} summarizes the principal contributions and findings of the
proposed methodology and presents the main conclusions.

\section{Mathematical Background}\label{sec:Mathback}

\subsection{Hankel-Lifted Reduced-Order Modeling of Finite-Horizon Experimental Data}
\label{subsec:Data_ROM}

Experimental measurements frequently produce large,
multivariate data sets. High sampling rates, long finite records and multiple
sensors may lead to data matrices whose direct
 processing, model identification and forecasting are computationally demanding.
Moreover, measured channels are generally correlated because they reflect the
evolution of a comparatively small number of coherent physical mechanisms. These
 motivates the construction of a reduced-order representation that retains
the dominant information contained in the measurements while replacing the full
data-supported dynamics by a selected set of modal coordinates. In this setting,
reduced-order modeling is not used to approximate a prescribed governing equation;
rather, it identifies a spectrally truncated representation directly from the
available experimental observations.

\begin{definition}[Finite modeling horizon, calibration window, observation horizon, and forecast horizon]
\label{def:finite_observation_forecast_horizon}
Let the complete experimental record contain $N$ uniformly spaced sampling
instants
\begin{equation}
    t_k=t_0+(k-1)\Delta t,
    \qquad k=1,\ldots,N,
    \qquad \Delta t>0,
    \label{eq:finite_modeling_grid}
\end{equation}
and define the \emph{finite modeling horizon} by the finite sampled-time set
\begin{equation}
    \mathcal T_{\mathrm{fin}}
    =\{t_k:k=1,\ldots,N\},
    \qquad
    T_{\mathrm{fin}}=t_N-t_1=(N-1)\Delta t<\infty.
    \label{eq:finite_modeling_horizon}
\end{equation}
The complete experimental record is indexed by
$\mathcal T_{\mathrm{fin}}$.

Let $1\leq N_Q<N$ be the number of paired experimental
quantity--channel samples used for identification and define the
\emph{finite observation horizon}
\begin{equation}
    \mathcal T_{\mathrm{obs}}
    =\{t_k:k=1,\ldots,N_Q\},
    \qquad
    T_{\mathrm{obs}}=t_{N_Q}-t_1=(N_Q-1)\Delta t.
    \label{eq:finite_observation_horizon}
\end{equation}
The \emph{finite calibration window} begins at the next sampling
instant, $n_{\mathrm s}=N_Q+1$, and, for a prescribed number
$H\in\mathbb N_0$ of calibration intervals satisfying
$n_{\mathrm s}+H\leq N$, is
\begin{equation}
    \mathcal T_H
    =\{t_k:k=n_{\mathrm s},\ldots,n_{\mathrm s}+H\},
    \qquad
    T_{\mathrm{FH}}=H\Delta t.
    \label{eq:finite_calibration_window}
\end{equation}
For a prescribed $N_{\mathrm f}\in\mathbb N_0$ satisfying
$N_{\mathrm f}\leq H+1$, the \emph{finite forecast horizon} is
\begin{equation}
    \mathcal T_{\mathrm{for}}
    =\{t_{N_Q+h}:h=1,\ldots,N_{\mathrm f}\},
    \qquad
    T_{\mathrm f}=N_{\mathrm f}\Delta t,
    \label{eq:finite_forecast_horizon}
\end{equation}
with $\mathcal T_{\mathrm{for}}=\varnothing$ when $N_{\mathrm f}=0$.
Consequently, the four temporal sets satisfy
\begin{equation}
\begin{gathered}
    \mathcal T_{\mathrm{obs}}\cup\mathcal T_{\mathrm{for}}
    \subseteq\mathcal T_{\mathrm{fin}},
    \qquad
    \mathcal T_H\subseteq\mathcal T_{\mathrm{fin}},
    \qquad
    \mathcal T_{\mathrm{for}}\subseteq\mathcal T_H,\\
    \mathcal T_{\mathrm{obs}}\cap\mathcal T_H=\varnothing,
    \qquad
    \max\mathcal T_{\mathrm{obs}}<\min\mathcal T_H.
\end{gathered}
\label{eq:finite_temporal_set_relations}
\end{equation}
\end{definition}

Throughout this work, a finite horizon $\mathcal T_{\bullet}$ denotes the
finite temporal domain represented by its sampled-time set. Its duration is
denoted by $T_{\bullet}$, whereas the corresponding number of samples,
forecast steps, or calibration intervals is denoted by an integer such as
$N$, $N_Q$, $N_{\mathrm f}$, or $H$.

Accordingly, all decomposition, identification, inverse-calibration,
reconstruction, stability, and forecasting statements established in this
work apply to prescribed finite sets of sampled times contained in
$\mathcal T_{\mathrm{fin}}$; they do not imply
global or asymptotic validity as $t\to\infty$.

Let
\begin{equation}
    \bm{y}_k
    = \bm{y}(t_k) + \bm{\varepsilon}_k
    \in \mathbb{F}^{m},
    \qquad
    t_k \in \mathcal T_{\mathrm{fin}},
    \qquad k=1,\ldots,N,
    \label{eq:experimental_measurements}
\end{equation}
denote the measurement vector recorded at time $t_k$, where
$\mathbb{F}=\mathbb{R}$ or $\mathbb{C}$, $m$ is the number of measured variables,
and $\bm{\varepsilon}_k$ represents
measurement noise and unresolved effects. In the notation of
Definition~\ref{def:finite_observation_forecast_horizon}, these measurements
constitute the complete experimental record on the finite modeling horizon
$\mathcal T_{\mathrm{fin}}$. They are assembled columnwise in the
snapshot matrix
\begin{equation}
    \bm{Y}
    = \begin{bmatrix}
        \bm{y}_1 & \bm{y}_2 & \cdots & \bm{y}_N
      \end{bmatrix}
    \in \mathbb{F}^{m\times N}.
    \label{eq:experimental_snapshot_matrix}
\end{equation}

 The measurement space is endowed with the Euclidean inner product
\begin{equation}
    \langle \bm{v},\bm{w}\rangle
    = \bm{v}^{\mathrm H}\bm{w},
    \qquad
    \bm{v},\bm{w}\in\mathbb{F}^{m},
    \label{eq:euclidean_inner_product}
\end{equation}
where $(\cdot)^{\mathrm H}$ denotes the conjugate transpose. The induced norm is
\begin{equation}
    \|\bm{v}\|_2
    = \sqrt{\langle \bm{v},\bm{v}\rangle}.
    \label{eq:euclidean_norm}
\end{equation}
For a data matrix $\bm{Z}=[\bm{z}_1,\ldots,\bm{z}_N]$, define the
finite-horizon data norm by
\begin{equation}
    \|\bm{Z}\|_F^{2}
    = \sum_{k=1}^{N}\|\bm{z}_k\|_2^{2}.
    \label{eq:finite_horizon_data_norm}
\end{equation}
Consequently, the role played by a continuous $L^2$ norm in equation-based
modeling is assumed here by a discrete, finite-horizon Frobenius norm defined
directly on the experimental observations.

To separate the mean operating condition from the measured fluctuations, introduce
a reference vector $\bar{\bm{y}}\in\mathbb{F}^{m}$, for example the temporal mean,
and define
\begin{equation}
    \bm{x}_k=\bm{y}_k-\bar{\bm{y}},
    \qquad
    \bm{X}
    = \begin{bmatrix}\bm{x}_1&\cdots&\bm{x}_N\end{bmatrix}.
    \label{eq:centered_experimental_data}
\end{equation}
If centering is not required, one simply sets $\bar{\bm{y}}=\bm{0}$.

The centered multichannel record is serialized in form
\begin{equation}
    \bm x_H
    =\operatorname{vec}(\bm X)
    =
    \begin{bmatrix}
       \bm x_1^{\mathsf T}&\cdots&\bm x_N^{\mathsf T}
    \end{bmatrix}^{\mathsf T}
    =
    \begin{bmatrix}
       x_{H,1}&\cdots&x_{H,N_H}
    \end{bmatrix}^{\mathsf T},
    \qquad
    N_H=mN.
    \label{eq:hkfed_number_hankel_columns}
\end{equation}
Thus, the $m$ centered channel values at each sampling instant remain
consecutive in a fixed order, while successive sampling instants are
concatenated chronologically.

In case $m\ll N$, a decomposition performed directly on
$\bm X\in\mathbb F^{m\times N}$ is restricted by
$\operatorname{rank}(\bm X)\leq m$, irrespective of the length of the
available temporal record. It was proved in \cite{ArbabiMezic2017} that organizing data into block-Hankel matrices of multiple observables transfers the analysis to a Hankel-lifted space,
whose row dimension is determined by the imposed delay depth rather than
by the small number of measured channels.
The present work introduces the scalar-entry serialization of a multichannel snapshot matrix and shows that serializing $\bm X$ and organizing $\bm x_H$ into delay coordinates  preserves all
measured entries while enabling the representation of a richer set of
data-supported temporal directions. The full lifted dynamics can then be
replaced by selected  Hankel modal coordinates that suppress redundant or
weakly contributing directions while retaining the dominant
finite-horizon temporal--energetic content required for reconstruction
and forecasting.

Let $q\in\{1,\ldots,N_H-1\}$ be the imposed \emph{Hankel delay depth} and set
$K=N_H-q+1\geq2$. The serialized delay vectors are
\begin{equation}
    \bm h_\ell
    =
    \begin{bmatrix}
       x_{H,\ell}&x_{H,\ell+1}&\cdots&x_{H,\ell+q-1}
    \end{bmatrix}^{\mathsf T}
    \in\mathbb F^q,
    \qquad \ell=1,\ldots,K,
    \label{eq:multichannel_delay_vector}
\end{equation}
and the corresponding lifted matrix is
\begin{equation}
    \bm H
    =
    \begin{bmatrix}
       \bm h_1&\bm h_2&\cdots&\bm h_K
    \end{bmatrix}
    =
    \begin{bmatrix}
       x_{H,1} & x_{H,2} & \cdots & x_{H,K}\\
       x_{H,2} & x_{H,3} & \cdots & x_{H,K+1}\\
       \vdots  & \vdots  & \ddots & \vdots\\
       x_{H,q} & x_{H,q+1} & \cdots & x_{H,N_H}
    \end{bmatrix}
    \in\mathbb F^{q\times K}.
    \label{eq:multichannel_block_hankel}
\end{equation}

\begin{definition}[Finite-dimensional Hankel-lifted space]
\label{def:finite_dimensional_hankel_lifted_space}
Let $q,K\in\mathbb N$ and set $N_H=q+K-1$. The Hankel lifting
operator of delay depth $q$ is the linear map
\begin{equation}
    \mathscr L_{q,K}:\mathbb F^{N_H}\longrightarrow
    \mathbb F^{q\times K},
    \qquad
    \bigl[\mathscr L_{q,K}(\bm z)\bigr]_{ij}
    =z_{i+j-1},
    \quad
    i=1,\ldots,q,\quad j=1,\ldots,K,
    \label{eq:hankel_lifting_operator}
\end{equation}
where
\[
    \bm z=(z_1,\ldots,z_{N_H})^\top\in\mathbb F^{N_H}
\]
is a generic scalar sequence.

The corresponding \emph{finite-dimensional Hankel-lifted space} is
the range, or image, of $\mathscr L_{q,K}$:
\begin{equation}
\begin{aligned}
    \mathscr H_{q,K}
    &:=
    \operatorname{range}(\mathscr L_{q,K})\\
    &=
    \left\{
       \mathscr L_{q,K}(\bm z):
       \bm z\in\mathbb F^{N_H}
    \right\}\\
    &=
    \left\{
       \bm Z\in\mathbb F^{q\times K}:
       Z_{ij}=z_{i+j-1}
       \text{ for }\bm z\in\mathbb F^{N_H},\
       \substack{i=1,\ldots,q,\\ j=1,\ldots,K}
    \right\}.
\end{aligned}
\label{eq:hankel_lifted_space}
\end{equation}
Thus, $\mathscr H_{q,K}$ consists of all $q\times K$ matrices whose
entries are constant along each anti-diagonal.

The operator $\mathscr L_{q,K}$ is injective because every component
$z_\ell$, $\ell=1,\ldots,N_H$, occurs in at least one matrix entry
satisfying $i+j-1=\ell$. Since $\mathscr L_{q,K}$ is a linear
isomorphism from $\mathbb F^{N_H}$ onto its range,
$\mathscr H_{q,K}$ is a linear subspace of
$\mathbb F^{q\times K}$ with
\begin{equation}
    \dim\mathscr H_{q,K}=N_H=q+K-1.
    \label{eq:hankel_lifted_space_dimension}
\end{equation}

For the particular serialized experimental record $\bm x_H$ defined in
\eqref{eq:hkfed_number_hankel_columns}, setting $\bm z=\bm x_H$ gives
\[
    \bm H=\mathscr L_{q,K}(\bm x_H)\in\mathscr H_{q,K}.
\]
\end{definition}

The two shifted matrices associated with the particular lifted matrix
$\bm H$ are defined by
\begin{equation}
    \bm H_0
    =\begin{bmatrix}\bm h_1&\cdots&\bm h_{K-1}\end{bmatrix},
    \qquad
    \bm H_1
    =\begin{bmatrix}\bm h_2&\cdots&\bm h_K\end{bmatrix},
    \qquad
    \bm H_0,\bm H_1\in\mathbb F^{q\times(K-1)}.
    \label{eq:shifted_hankel_pair}
\end{equation}
Thus, each column pair $(\bm h_\ell,\bm h_{\ell+1})$ represents one observed
shift in the serialized delay-coordinate space.

Because vectorization and $\mathscr L_{q,K}$ are injective, the passage from
$\bm X$ to $\bm H$ loses no measured entry. 
The Hankel row space is endowed with the Euclidean inner product, and the
corresponding matrix space uses the Frobenius norm.

\begin{definition}[Hankel-lifted reduced-order model of finite-horizon experimental data]
\label{def:finite_horizon_experimental_rom}
Let $\bm H\in\mathscr H_{q,K}$ be the lifted representation of the centered
experimental record on $\mathcal T_{\mathrm{fin}}$, and let $p\leq
\min\{q,K\}$ be the dimension of the data-supported lifted subspace on which
the modal factorization is constructed. For $1\leq r\leq p$, an
$r$-dimensional modal representation is
\begin{equation}
\begin{aligned}
    \bm H\approx\widehat{\bm H}_r
    &=\bm\Psi_r\bm A_r,\\
    \bm\Psi_r
    &=\begin{bmatrix}\bm\psi_1&\cdots&\bm\psi_r\end{bmatrix}
      \in\mathbb F^{q\times r},\\
    \bm A_r
    &=\begin{bmatrix}\bm a_1&\cdots&\bm a_K\end{bmatrix}
      \in\mathbb F^{r\times K}.
\end{aligned}
    \label{eq:experimental_rom}
\end{equation}
where the lifted modes satisfy
\begin{equation}
    \bm\Psi_r^{\mathrm H}\bm\Psi_r=\bm I_r,
    \label{eq:orthogonal_modes}
\end{equation}
and $\bm a_\ell=[a_1(\ell),\ldots,a_r(\ell)]^{\mathsf T}\in\mathbb F^r$
is the reduced-coordinate vector associated with the $\ell$th Hankel column.
For an orthogonal basis, the projected coefficients are
\begin{equation}
    \bm a_\ell=\bm\Psi_r^{\mathrm H}\bm h_\ell,
    \qquad \ell=1,\ldots,K.
    \label{eq:reduced_coordinates}
\end{equation}
Their evolution along the serialized Hankel-column shifts may subsequently be
represented by a data-driven model of the form
\begin{equation}
    \bm a_{\ell+1}
    = \mathcal{M}_r\!\left(
        \bm a_\ell,\bm a_{\ell-1},\ldots,
        \bm a_{\ell-n_{\mathrm{mem}}+1};\bm\theta
      \right)+\bm\eta_\ell,
    \label{eq:reduced_discrete_dynamics}
\end{equation}
where $n_{\mathrm{mem}}$ is the model memory length, $\bm\theta$ contains the
identified parameters, and $\bm\eta_\ell$ represents model discrepancy. 

The representation \eqref{eq:experimental_rom} is required to achieve an acceptably small lifted-space
relative reconstruction error
\begin{equation}
    \mathcal{E}_r
    =\frac{\|\bm H-\bm\Psi_r\bm A_r\|_F}
           {\|\bm H\|_F},
    \label{eq:relative_rom_error}
\end{equation}
while preserving the dynamical and statistical features relevant to
identification and forecasting.

 The representation \eqref{eq:experimental_rom} is called
\emph{reduced order} when $r<p$; hence the reduction is relative to the
data-supported lifted subspace, not to the $m$ measured physical channels.
\end{definition}

The vectors $\bm\psi_j$ are data-derived structures in the $q$-dimensional
Hankel row space, while the coefficients $a_j(\ell)$ quantify their activation
over the $K$ serialized-delay positions. Modal decomposition therefore
separates recurrent lifted structures from their coefficients and recasts the
modeling task in $\mathbb F^r$. Although $r$ may exceed the number $m$ of
measured channels, it remains a spectral truncation whenever $r<p$.

The full lifted matrix contains $qK$ scalar entries, whereas storing its two
rank-$r$ factors requires $qr+rK=r(q+K)$ entries. The corresponding nominal
factor-storage compression ratio is
\begin{equation}
    \mathrm{CR}_r
    =\frac{qK}{r(q+K)}.
    \label{eq:compression_ratio}
\end{equation}
Here $\mathrm{CR}_r>1$ indicates entrywise storage compression of the lifted
matrix by the two factors. This storage condition is distinct from, and is not
required for, reduced-order modeling in the present framework: the latter is
defined by the spectral truncation $r<p$. Computational efficiency arises
primarily from evolving the selected coordinates and their finite-dimensional
dynamics rather than from storing the factorization alone.

\begin{remark}
No initial or boundary conditions are imposed in this data-driven formulation,
because the governing differential equations and their boundary operators are not
assumed to be available. The initial reduced history is obtained directly by
projecting the required first Hankel columns, whereas all admissibility,
stability, and predictive properties are established from the finite-horizon
data and from the identified reduced dynamics.
\end{remark}

\subsection{Koopman Operator Framework in the Hankel-Lifted Space}
\label{subsec:koopman_framework}

Definition~\ref{def:finite_horizon_experimental_rom} formulates the
reduced-order model directly in the Hankel-lifted space as
\begin{equation}
    \widehat{\bm H}_r
    =\sum_{j=1}^{r}\bm\psi_j\bm a_j^{\mathsf T}
    =\bm\Psi_r\bm A_r,
    \label{eq:rom_recalled}
\end{equation}
where $\bm a_j^{\mathsf T}$ is the $j$th row of $\bm A_r$. 
The theoretical question relevant to the present framework is therefore
whether the serialized shift underlying $\bm H$ admits an operator representation capable of providing precisely such a separation
between modal structures and their time-dependent amplitudes.

Introduced by Koopman in 1931, Koopman Operator Theory
\cite{Koopman1931,KoopmNeum1932,Koopman1936} supplies a suitable framework
for this purpose and provides a rigorous mathematical
framework for the modal decomposition of nonlinear dynamical systems by
representing their evolution through linear, generally infinite-dimensional
operators acting on observables. Despite its early formulation, the theory
remained largely theoretical for several decades.

In 1985, Lasota and Mackey \cite{LasotaMackey1985} renewed scientific interest
in the original approach proposed by Koopman. In particular, they explicitly
introduced and formalized the term ``Koopman operator'' in the literature,
thereby contributing to the standardized nomenclature of this concept. This
terminology was further reinforced in the second edition of their book
\cite{LasotaMackey94}, where the Koopman operator was studied as the adjoint of
the Frobenius--Perron operator within a functional-analytic framework.

At that time, however, computational resources were still insufficient for the
numerical treatment of large-scale complex systems. The practical potential of
Koopman Operator Theory began to be realized in the early 2000s. Its modern
scientific revival was advanced by the pioneering work of Mezi\'{c} and Banaszuk
\cite{MezicBanaszuk2004,Mezic2005}, which established a foundation for
data-driven modal decomposition. The seminal work of Mezi\'{c}
\cite{Mezic2005} introduced a rigorous spectral framework for analyzing
nonlinear dynamical systems through Koopman operator theory, formalizing the
spectral decomposition of observables into Koopman eigenfunctions and modes.
The spectral properties of the Koopman operator have subsequently been the
subject of extensive investigation, as exemplified in
\cite{Rowley2009,Chen2012,Brunton2016}.

In the present setting, no governing differential equation is assumed to be
known. Let $\bm s_k\in\mathcal S$ denote the unknown physical state at $t_k$ and
represent its one-sample evolution abstractly by
\begin{equation}
    \bm s_{k+1}=\bm F_{\Delta t}(\bm s_k),
    \qquad k=1,\ldots,N-1,
    \label{eq:experimental_evolution_map}
\end{equation}
and let the centered measurement observable be
\begin{equation}
    \bm g:\mathcal S\rightarrow\mathbb F^m,
    \qquad
    \bm x_k=\bm g(\bm s_k).
    \label{eq:experimental_observable}
\end{equation}
The physical map advances time by $\Delta t$, whereas one position in the
serialized vector \eqref{eq:hkfed_number_hankel_columns} advances first through
the channel index and advances physical time only after the $m$th channel.
To represent this distinction rigorously, define the augmented serialized state
space
\begin{equation}
    \mathcal S_{\mathrm{ser}}=\mathcal S\times\{1,\ldots,m\}
    \label{eq:serialized_augmented_state_space}
\end{equation}
and the serialized shift map $\bm F_{\mathrm{ser}}:\mathcal S_{\mathrm{ser}}
\rightarrow\mathcal S_{\mathrm{ser}}$ by
\begin{equation}
    \bm F_{\mathrm{ser}}(\bm s,i)
    =
    \begin{cases}
       (\bm s,i+1), & 1\leq i<m,\\
       (\bm F_{\Delta t}(\bm s),1), & i=m.
    \end{cases}
    \label{eq:serialized_shift_map}
\end{equation}
For $\ell=m(k-1)+i$, set $\bm\sigma_\ell=(\bm s_k,i)$ and introduce the
scalar serialized observable
\begin{equation}
    g_{\mathrm{ser}}(\bm s,i)
    =\bm e_i^{\mathsf T}\bm g(\bm s),
    \qquad
    x_{H,\ell}=g_{\mathrm{ser}}(\bm\sigma_\ell),
    \qquad
    \bm\sigma_{\ell+1}=\bm F_{\mathrm{ser}}(\bm\sigma_\ell),
    \label{eq:serialized_scalar_observable}
\end{equation}
where $\bm e_i$ is the $i$th canonical basis vector of $\mathbb F^m$.
The $q$-component delay observable associated with the Hankel lift is therefore
\begin{equation}
    \bm h_q(\bm\sigma)
    =
    \begin{bmatrix}
       g_{\mathrm{ser}}(\bm\sigma) &
       g_{\mathrm{ser}}(\bm F_{\mathrm{ser}}(\bm\sigma)) &
       \cdots &
       g_{\mathrm{ser}}(\bm F_{\mathrm{ser}}^{q-1}(\bm\sigma))
    \end{bmatrix}^{\mathsf T}\in\mathbb F^q.
    \label{eq:delay_observable}
\end{equation}
Equations~\eqref{eq:multichannel_delay_vector} and
\eqref{eq:delay_observable} give
$\bm h_\ell=\bm h_q(\bm\sigma_\ell)$. Thus the columns of $\bm H$ are
samples of one vector-valued observable along the augmented serialized
trajectory. This construction prevents a one-entry channel shift from being
incorrectly identified with a physical-time advance of length $\Delta t$.

The classical Koopman construction requires a well-defined evolution map and
an observable space invariant under composition
\cite{Koopman1931,LasotaMackey1985,Mezic2005,Budisic2012,Williams2015}. 
The organization of time-shifted observable data into Hankel matrices constitutes
an advancement in data-driven Koopman spectral analysis with delay coordinates.
In 2017, Arbabi and
Mezi\'{c} \cite{ArbabiMezic2017} interpreted Hankel columns as finite samples
of a Koopman--Krylov sequence and introduced Hankel--DMD algorithm on invariant observable
subspaces. In the same year, Susuki, Sako, and Hikihara
\cite{SusukiSakoHikihara2017} established spectral equivalence between the
Koopman operators of an original system and its reconstruction through delay
embedding, while Brunton et al. \cite{BruntonEtAl2017} introduced the HAVOK (Hankel Alternative View of Koopman)
analysis of delay-embedded data. In 2019, Das and Giannakis
\cite{DasGiannakis2019} developed spectral results for Koopman operators using
delay-coordinate maps. In 2020, Kamb et al. \cite{KambEtAl2020} derived
operator representations in time-delay coordinates and characterized
finite-dimensional delay-observable bases. More recently, Koltai and Kunde
\cite{KoltaiKunde2024} formulated Koopman prediction through Krylov spaces of
time-delayed observables in 2024, and Colchero et al.
\cite{ColcheroEtAl2026} extended HAVOK to multichannel time series by means of
a block-Hankel construction in 2026.

Building on this established theoretical foundation, the present work
develops a distinct modeling and forecasting framework for coupled data in
the regime $m\ll N$: the multichannel snapshots are serialized in a fixed
channel--time order and lifted into a single scalar-entry Hankel space.
The methodological novelty developed below lies in integrating this
construction with a finite-horizon energy decomposition of Koopman operator and the associated
selection of data-supported modal triplets, which provide the reduced
coordinates for the unified three-phase framework.

Accordingly, the following
proposition specializes the established operator framework to the
channel-augmented serialized shift and makes explicit how its action
generates the shifted Hankel pair used by the proposed methodology. This
explicit connection to the scalar-entry Hankel construction constitutes
the application-specific theoretical contribution of the following proposition,
whereas general operator existence and generic delay-coordinate
compatibility follow from the established theory summarized above.

\begin{proposition}[Koopman compatibility of the channel-augmented
serialized Hankel lift]
\label{prop:koopman_existence}
Let $\mathcal S_{\mathrm{ser}}$ be the augmented state space defined in
\eqref{eq:serialized_augmented_state_space}, let
\[
    \bm F_{\mathrm{ser}}:
    \mathcal S_{\mathrm{ser}}
    \longrightarrow
    \mathcal S_{\mathrm{ser}}
\]
be the serialized shift map in \eqref{eq:serialized_shift_map}, and let
$\mathcal G_{\mathrm{ser}}$ be a linear space of
$\mathbb F$-valued observables containing $g_{\mathrm{ser}}$ and invariant
under composition with $\bm F_{\mathrm{ser}}$:
\begin{equation}
    g\in\mathcal G_{\mathrm{ser}}
    \quad\Longrightarrow\quad
    g\circ\bm F_{\mathrm{ser}}
    \in\mathcal G_{\mathrm{ser}}.
    \label{eq:observable_space_invariance}
\end{equation}
Then the Koopman composition operator
\begin{equation}
    \mathcal K_{\mathrm{ser}}:
    \mathcal G_{\mathrm{ser}}
    \longrightarrow
    \mathcal G_{\mathrm{ser}},
    \qquad
    (\mathcal K_{\mathrm{ser}}g)(\bm\sigma)
    =
    g\!\left(\bm F_{\mathrm{ser}}(\bm\sigma)\right),
    \label{eq:koopman_operator}
\end{equation}
is well defined and linear, even when the underlying physical evolution
map $\bm F_{\Delta t}$ is nonlinear.

For $q\geq1$, define the serialized Hankel delay-lifting operator on
observables by
\begin{equation}
    \mathscr D_q g
    =
    \begin{bmatrix}
        g &
        \mathcal K_{\mathrm{ser}}g &
        \cdots &
        \mathcal K_{\mathrm{ser}}^{q-1}g
    \end{bmatrix}^{\mathsf T}
    \in(\mathcal G_{\mathrm{ser}})^q,
    \label{eq:hankel_delay_lifting_operator}
\end{equation}
and let $\mathcal K_{\mathrm{ser}}^{(q)}$ denote the componentwise
Koopman action on vector-valued observables. Then
\begin{equation}
    \mathcal K_{\mathrm{ser}}^{(q)}\mathscr D_q g
    =
    \mathscr D_q\mathcal K_{\mathrm{ser}}g,
    \qquad
    g\in\mathcal G_{\mathrm{ser}}.
    \label{eq:hankel_koopman_intertwining}
\end{equation}

In particular,
$\bm h_q=\mathscr D_qg_{\mathrm{ser}}$, where
$\bm h_q:\mathcal S_{\mathrm{ser}}\rightarrow\mathbb F^q$ is the
$q$-component delay observable. For the shifted Hankel pair defined in
\eqref{eq:shifted_hankel_pair}, its evaluations along the serialized
trajectory satisfy
\begin{equation}
\begin{aligned}
    \bm h_\ell
    &=
    \bm h_q(\bm\sigma_\ell),\\
    \bm h_{\ell+1}
    &=
    \bigl(
        \mathcal K_{\mathrm{ser}}^{(q)}\bm h_q
    \bigr)(\bm\sigma_\ell),
    \qquad \ell=1,\ldots,K-1,\\
    \bm H_0
    &=
    \begin{bmatrix}
        \bm h_q(\bm\sigma_1)&
        \cdots&
        \bm h_q(\bm\sigma_{K-1})
    \end{bmatrix},\\
    \bm H_1
    &=
    \begin{bmatrix}
        \bigl(
            \mathcal K_{\mathrm{ser}}^{(q)}\bm h_q
        \bigr)(\bm\sigma_1)&
        \cdots&
        \bigl(
            \mathcal K_{\mathrm{ser}}^{(q)}\bm h_q
        \bigr)(\bm\sigma_{K-1})
    \end{bmatrix}.
\end{aligned}
\label{eq:serialized_hankel_sampled_action}
\end{equation}
Thus, $\bm h_\ell\in\mathbb F^q$ is the evaluation of the delay
observable $\bm h_q$ at the $\ell$th serialized state and constitutes
the $\ell$th column of $\bm H$.

Define the finite delay-observable space
\begin{equation}
    \mathcal G_q^{H}
    =
    \operatorname{span}
    \left\{
        g_{\mathrm{ser}},
        \mathcal K_{\mathrm{ser}}g_{\mathrm{ser}},
        \ldots,
        \mathcal K_{\mathrm{ser}}^{q-1}g_{\mathrm{ser}}
    \right\}.
    \label{eq:finite_delay_observable_space}
\end{equation}
The components of $\bm h_q$ are precisely the delay observables spanning
$\mathcal G_q^{H}$. Therefore, if $\mathcal G_q^{H}$ is invariant under
$\mathcal K_{\mathrm{ser}}$, the componentwise Koopman image of
$\bm h_q$ can be represented as a linear combination of its components.
Consequently, there exists a matrix
$\bm K_H\in\mathbb F^{q\times q}$ such that
\begin{equation}
\begin{aligned}
    \mathcal K_{\mathrm{ser}}^{(q)}\bm h_q
    &=
    \bm K_H\bm h_q,\\
    \bm h_{\ell+1}
    &=
    \bm K_H\bm h_\ell,
    \qquad \ell=1,\ldots,K-1,\\
    \bm H_1
    &=
    \bm K_H\bm H_0.
\end{aligned}
\label{eq:exact_hankel_koopman_relation}
\end{equation}
The second identity follows by evaluating the first identity at
$\bm\sigma_\ell$ and using
\eqref{eq:serialized_hankel_sampled_action}; stacking these sampled
identities for $\ell=1,\ldots,K-1$ gives the third identity.

If the component observables of $\bm h_q$ are linearly independent, then
$\bm K_H$ is unique. If they are linearly dependent, the representing
matrix need not be unique; nevertheless, every admissible representation
induces the same Koopman action on $\mathcal G_q^{H}$ and the same sampled
relation \eqref{eq:exact_hankel_koopman_relation}.
\end{proposition}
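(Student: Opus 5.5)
The argument is a sequence of direct verifications from the definitions, carried out in the order in which the statement lists its claims.

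\textbf{Well-definedness and linearity of $\mathcal K_{\mathrm{ser}}$.} First I check that $\bm F_{\mathrm{ser}}$ in \eqref{eq:serialized_shift_map} is a genuine single-valued map $\mathcal S_{\mathrm{ser}}\to\mathcal S_{\mathrm{ser}}$. The two cases $1\le i<m$ and $i=m$ are disjoint and exhaustive, and both outputs lie in $\mathcal S\times\{1,\ldots,m\}$. Given that, $g\circ\bm F_{\mathrm{ser}}$ is a well-defined $\mathbb F$-valued function, and by \eqref{eq:observable_space_invariance} it lies in $\mathcal G_{\mathrm{ser}}$. Linearity follows pointwise: $((\alpha g_1+\beta g_2)\circ\bm F_{\mathrm{ser}})(\bm\sigma)=\alpha g_1(\bm F_{\mathrm{ser}}(\bm\sigma))+\beta g_2(\bm F_{\mathrm{ser}}(\bm\sigma))$. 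This identity does not use any linearity of $\bm F_{\Delta t}$, which settles the remark about nonlinear dynamics.

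\textbf{Intertwining relation and the pair $\bm H_0,\bm H_1$.} Applying $\mathcal K_{\mathrm{ser}}$ componentwise to $\mathscr D_q g$ gives the vector with entries $\mathcal K_{\mathrm{ser}}^{i}g$ for $i=1,\ldots,q$. This is exactly $\mathscr D_q(\mathcal K_{\mathrm{ser}}g)$, because $\mathcal K_{\mathrm{ser}}^{i-1}\mathcal K_{\mathrm{ser}}=\mathcal K_{\mathrm{ser}}^{i}$, which proves \eqref{eq:hankel_koopman_intertwining}.

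Next, iterating the definition gives $(\mathcal K_{\mathrm{ser}}^{i}g)(\bm\sigma)=g(\bm F_{\mathrm{ser}}^{i}(\bm\sigma))$, so $\mathscr D_q g_{\mathrm{ser}}=\bm h_q$ by \eqref{eq:delay_observable}. To evaluate along the trajectory, I would prove by a short induction that $\bm F_{\mathrm{ser}}(\bm\sigma_\ell)=\bm\sigma_{\ell+1}$ for $\ell=m(k-1)+i$. There are two cases:
\begin{itemize}
\item if $i<m$, then $\bm F_{\mathrm{ser}}(\bm s_k,i)=(\bm s_k,i+1)=\bm\sigma_{\ell+1}$;
\item if $i=m$, then $\bm F_{\mathrm{ser}}(\bm s_k,m)=(\bm F_{\Delta t}(\bm s_k),1)=(\bm s_{k+1},1)=\bm\sigma_{\ell+1}$, using \eqref{eq:experimental_evolution_map}.
\end{itemize}
The index range needs care here. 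The successor $\bm\sigma_{\ell+q}$ must exist for every $\ell\le K$, that is, up to $\ell+q-1\le N_H$. This holds since $K=N_H-q+1$. In the case $i=m$, $k<N$ whenever $\ell<N_H$, so $\bm s_{k+1}$ is defined.

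Therefore
\[
g_{\mathrm{ser}}(\bm F_{\mathrm{ser}}^{i-1}(\bm\sigma_\ell))=g_{\mathrm{ser}}(\bm\sigma_{\ell+i-1})=x_{H,\ell+i-1},
\]
which gives $\bm h_\ell=\bm h_q(\bm\sigma_\ell)$. Combining this with $(\mathcal K^{(q)}_{\mathrm{ser}}\bm h_q)(\bm\sigma_\ell)=\bm h_q(\bm F_{\mathrm{ser}}(\bm\sigma_\ell))=\bm h_q(\bm\sigma_{\ell+1})=\bm h_{\ell+1}$ and stacking columns yields \eqref{eq:serialized_hankel_sampled_action}.

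\textbf{Existence of $\bm K_H$ under invariance.} Assume $\mathcal G_q^H$ is invariant under $\mathcal K_{\mathrm{ser}}$. Each $\mathcal K_{\mathrm{ser}}\big(\mathcal K_{\mathrm{ser}}^{i-1}g_{\mathrm{ser}}\big)$ then lies in the span of the components of $\bm h_q$. Choosing one set of coefficients per component defines the rows of a matrix $\bm K_H$ with $\mathcal K^{(q)}_{\mathrm{ser}}\bm h_q=\bm K_H\bm h_q$ as an identity of functions. Note that rows $1,\ldots,q-1$ can be taken as shift rows, and only the last row needs invariance. Evaluating this identity at $\bm\sigma_\ell$ and applying the previous step gives $\bm h_{\ell+1}=\bm K_H\bm h_\ell$, and stacking gives $\bm H_1=\bm K_H\bm H_0$.

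\textbf{Uniqueness.} If the components of $\bm h_q$ are independent, the coefficients in each row are unique, so $\bm K_H$ is unique. Otherwise, two representations $\bm K_H,\bm K_H'$ satisfy $(\bm K_H-\bm K_H')\bm h_q=0$ as functions. Hence both act identically on every element $\bm c^{\mathsf T}\bm h_q$ of $\mathcal G_q^H$, and evaluating at $\bm\sigma_\ell$ gives identical sampled relations.

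\textbf{Main obstacle.} I expect the only delicate point to be the bookkeeping in the trajectory identity $\bm F_{\mathrm{ser}}(\bm\sigma_\ell)=\bm\sigma_{\ell+1}$, specifically the channel wrap-around at $i=m$ and the check that every index used stays within $1,\ldots,N_H$. The rest is unwinding definitions.
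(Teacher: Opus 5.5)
Your proposal is correct and follows the paper's proof essentially step for step. It obtains well-definedness and linearity from invariance and pointwise composition, the intertwining identity by componentwise application of $\mathcal K_{\mathrm{ser}}$, the sampled relations from $\bm F_{\mathrm{ser}}^j(\bm\sigma_\ell)=\bm\sigma_{\ell+j}$, and $\bm K_H$ and its (non)uniqueness by collecting coefficients rowwise. You are simply more explicit than the paper about the channel wrap-around at $i=m$ and the index range, where the precise requirement is that $\bm\sigma_{\ell+q-1}$ exist for $\ell\le K$, equivalently $\bm\sigma_{\ell+q}$ for $\ell\le K-1$; you also add the correct observation that $\bm K_H$ can be taken in companion form, with only its last row depending on invariance.
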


\begin{proof}
For every $g\in\mathcal G_{\mathrm{ser}}$, the composition
$g\circ\bm F_{\mathrm{ser}}$ is well defined on
$\mathcal S_{\mathrm{ser}}$ and belongs to $\mathcal G_{\mathrm{ser}}$ by
\eqref{eq:observable_space_invariance}. Therefore,
\eqref{eq:koopman_operator} defines an operator from
$\mathcal G_{\mathrm{ser}}$ into itself,
which proves its existence. Furthermore, for
$g_1,g_2\in\mathcal G_{\mathrm{ser}}$ and
$\alpha,\beta\in\mathbb F$,
\begin{align}
    \mathcal K_{\mathrm{ser}}(\alpha g_1+\beta g_2)
    &=(\alpha g_1+\beta g_2)\circ\bm F_{\mathrm{ser}} \notag\\
    &=\alpha(g_1\circ\bm F_{\mathrm{ser}})
      +\beta(g_2\circ\bm F_{\mathrm{ser}}) \notag\\
    &=\alpha\mathcal K_{\mathrm{ser}}g_1
      +\beta\mathcal K_{\mathrm{ser}}g_2,
    \label{eq:koopman_linearity}
\end{align}
and hence $\mathcal K_{\mathrm{ser}}$ is linear.

Because $\mathcal G_{\mathrm{ser}}$ is invariant, all iterates in
\eqref{eq:hankel_delay_lifting_operator} belong to
$\mathcal G_{\mathrm{ser}}$. Componentwise application gives
\begin{equation}
\begin{aligned}
    \mathcal K_{\mathrm{ser}}^{(q)}\mathscr D_q g
    &=
    \begin{bmatrix}
       \mathcal K_{\mathrm{ser}}g &
       \mathcal K_{\mathrm{ser}}^2g & \cdots &
       \mathcal K_{\mathrm{ser}}^qg
    \end{bmatrix}^{\mathsf T}\\
    &=\mathscr D_q\mathcal K_{\mathrm{ser}}g,
\end{aligned}
    \label{eq:proof_hankel_koopman_intertwining}
\end{equation}
which proves \eqref{eq:hankel_koopman_intertwining}.
For $g=g_{\mathrm{ser}}$, repeated application of
\eqref{eq:serialized_shift_map} yields
$\bm F_{\mathrm{ser}}^j(\bm\sigma_\ell)=\bm\sigma_{\ell+j}$; hence
\eqref{eq:delay_observable} gives
$\mathscr D_qg_{\mathrm{ser}}=\bm h_q$ and
\eqref{eq:serialized_hankel_sampled_action} follows.

Finally, invariance of $\mathcal G_q^{H}$ implies that the Koopman image of
every component of $\bm h_q$ is a linear combination of its $q$ components.
Collecting these coefficients rowwise defines $\bm K_H$ and gives the first
identity in \eqref{eq:exact_hankel_koopman_relation}. Evaluation at
$\bm\sigma_1,\ldots,\bm\sigma_{K-1}$ gives the second identity. Linear
independence makes the coordinate representation unique; under dependence,
different coefficient matrices represent the same restricted operator action.
\end{proof}

The suitability of the Koopman framework for the objective stated in
Definition~\ref{def:finite_horizon_experimental_rom} follows from its spectral
representation. Let $\varphi_j\in\mathcal G_{\mathrm{ser}}$ be a Koopman
eigenfunction satisfying
\begin{equation}
    \mathcal K_{\mathrm{ser}}\varphi_j
    =\lambda_j\varphi_j,
    \label{eq:koopman_eigenfunction}
\end{equation}
and suppose that the delay observable belongs, exactly or approximately on the
finite serialized trajectory, to the span of the selected eigenfunctions. It
then admits the expansion
\begin{equation}
    \bm h_q(\bm\sigma)
    =\sum_{j=1}^{\infty}\varphi_j(\bm\sigma)\bm\Phi_j,
    \label{eq:observable_spectral_expansion}
\end{equation}
where $\bm\Phi_j\in\mathbb C^q$ are the associated lifted Koopman modes.
Evaluating \eqref{eq:observable_spectral_expansion} at
$\bm\sigma_\ell$, $\ell=1,\ldots,K$, gives
\begin{equation}
    \bm h_\ell
    =\sum_{j=1}^{\infty}a_j(\ell)\bm\Phi_j,
    \qquad
    a_j(\ell)=\varphi_j(\bm\sigma_\ell).
    \label{eq:koopman_modal_decomposition}
\end{equation}
Consequently, retaining $r$ relevant modes gives the finite-horizon lifted
factorization
\begin{equation}
    \bm H\approx\widehat{\bm H}_r
    =\bm{\Phi}_r\bm{A}_r,
    \qquad
    \bm{\Phi}_r=
    \begin{bmatrix}\bm{\Phi}_1&\cdots&\bm{\Phi}_r\end{bmatrix}.
    \label{eq:koopman_data_factorization}
\end{equation}
Expression~\eqref{eq:koopman_data_factorization} has exactly the form required
by Definition~\ref{def:finite_horizon_experimental_rom}, with
\begin{equation}
    \bm{\psi}_j=\bm{\Phi}_j,\qquad
    a_j(\ell)=\varphi_j(\bm\sigma_\ell),
    \qquad j=1,\ldots,r.
    \label{eq:definition_koopman_identification}
\end{equation}
Thus, the Koopman modes supply structures in the lifted Hankel row space, and
their coefficients quantify activation along serialized-delay positions. The
factorization is subsequently returned to the $m$ physical channels by the
inverse Hankel construction. 
%

The present subsection establishes the mathematical foundation needed here:
the serialized shift has a Koopman composition operator, and its spectral
representation is compatible with the Hankel-lifted reduced-order model.

A distinctive methodological feature developed in this paper is that the selected
reduced coordinate subspace is determined by finite-horizon energetic
relevance and is not required to be Koopman invariant. Rather than
seeking exact linear closure of the retained coordinates, the proposed method
constructs orthogonal energy directions of the Koopman energy operator, retains the
associated modes, coefficients, and energy eigenvalues as linked modal
triplets, and identifies their nonlinear memory-dependent evolution. This separation between energy-based coordinate selection and
nonlinear reduced-dynamics identification extends the methodology to
finite experimental settings in which an accurate low-dimensional
Koopman-invariant subspace may not be available.

A novel method for numerically identifying a
finite-dimensional approximation of the Koopman operator from finite-horizon
experimental data is developed in the subsequent
section. 
The purpose of that approximation is to determine the lifted modes
$\bm\Phi_j$ and the associated coefficients $a_j(\ell)$ required by
Definition~\ref{def:finite_horizon_experimental_rom}.

\section{Computational Aspects}\label{sec:computational_aspects}

\subsection{Overview of the Proposed Data-Driven Twin Framework}

This section introduces a novel unified three-phase methodology for constructing a
fully data-driven twin model of finite-horizon coupled experimental data, by means of
Koopman mode decomposition theory. 

 \emph{Phase~I} introduces the
Hankel--Koopman finite-horizon energy decomposition of multichannel
data, which constructs an energy-ranked modal representation specifically
adapted to closely related or coupled data channels. 

\emph{Phase~II} introduces a novel two-space inverse-calibration technique.
A nonlinear dynamical governing model for the vector of
temporal coefficients provided by Phase I is constructed and recursively simulated in the Hankel
coefficient space associated with the calibration window $\mathcal T_H$.
The resulting coefficient dynamics are mapped through the selected
Hankel--Koopman energy modes and anti-diagonal recovery to the physical
measurement space on $\mathcal T_H$, where they define an inverse-calibrated
multi-output model of the experimental data. This innovative
Hankel-to-physical construction combines reduced dynamical identification
with calibration governed by the reconstructed physical channels on the same
finite window. 

\emph{Phase~III} formulates a finite-horizon recursive
forecasting procedure for a coupled quantity-of-interest. The physical
channel trajectories reconstructed in Phase~II from the simulated
Hankel-space coefficient dynamics are used as
exogenous inputs to a separate correlation-based, Pareto-selected 
model, so the quantity-of-interest is advanced recursively over
the prescribed forecast horizon.

Thus, the three phases define an integrated decomposition--identification--forecasting
pipeline. The phase-specific originality lies in the Hankel--Koopman energy construction
of Phase~I and the two-space inverse-calibration methodology of Phase~II.
Phase~III uses a recursive forecasting model, while introducing
within the unified framework the transfer of Phase~II-reconstructed physical
channels as its exogenous forecast inputs. To the best of the authors'
knowledge, the complete three-phase integration and its explicitly defined
finite-horizon interfaces constitute a new methodological contribution.

\subsection{Phase I: Hankel--Koopman Finite-Horizon Energy Decomposition}
\label{subsec:phase_hkfed}

Phase~I transforms the coupled experimental observations into a selected set of
orthogonal modal structures whose ordering reflects both their contribution to
the measured data and their persistence over a prescribed operator horizon. The
construction of the reduced-order model is performed entirely from the available samples. The resulting objects in Phase~I are the linked Koopman-energy modal triplets $\mathfrak t_j=(\mu_j,\bm\phi_j,\bm c_j^{\mathsf T})$, representing energy-operator eigenvalues, energy modes, and their temporal coefficients as one consistent modal unit.

\subsubsection{Hankel Representation of the Multichannel Data}

This section introduces a novel Hankel--Koopman energy decomposition, in which
Hankel lifting is used to enhance numerical robustness when the number of
measured channels is smaller than the number of temporal snapshots. A
methodological novelty introduced in the present work is the serialized Hankel
lifting of multichannel finite-horizon data.

Let $\bm X\in\mathbb F^{m\times N}$, $\bm x_H\in\mathbb F^{N_H}$, and
$\bm H\in\mathscr H_{q,K}$ be the centered measurement matrix, serialized
record, and Hankel lift defined in
\eqref{eq:centered_experimental_data}--\eqref{eq:hankel_lifted_space}.
Phase~I imposes the delay depth as
$q\approx\lfloor N/2\rfloor$; hence $q$ is both the delay depth and the exact
number of Hankel rows. The fixed columnwise channel--time ordering enables the
lift to encode within-channel evolution and cross-channel dependence in one
representation, so the decomposition identifies coherent structures of the
complete multichannel record.

Channelwise normalization or scaling may be applied to $\bm X$ before
serialization and then inverted after recovery and reshaping to the original
$m\times N$ channel arrangement.

Phase~I uses the serialized shifted Hankel pair
$(\bm H_0,\bm H_1)$ defined in \eqref{eq:shifted_hankel_pair}.

\subsubsection{Finite-Dimensional Approximation of the Koopman Operator}

Proposition \ref{prop:koopman_existence} establishes that the shifted
Hankel pair
$\bm{H}_0$ and $\bm{H}_1$ are paired samples of the Koopman action, with the augmented serialized state $\bm\sigma$ and delay observable
$\bm h_q$ defined in \eqref{eq:serialized_scalar_observable}. 
The
finite-dimensional approximation is now obtained directly from these two
matrices by projecting the sampled relation onto the data-supported subspace.

Let
\begin{equation}
    p=\operatorname{rank}(\bm{H}_0)
    \leq \min\{q,K-1\}
    \label{eq:hkfed_economy_rank}
\end{equation}
denote the rank determined from the economy-size singular value decomposition
of the input matrix. In the full-rank case,
$p=\min\{q,K-1\}$.
Restricting the economy-size factors to the
positive singular spectrum gives
\begin{equation}
    \bm{H}_0
    =
    \bm{U}_{p}\bm{\Sigma}_{p}\bm{V}_{p}^{\mathrm H},
    \qquad
    \bm{U}_{p}^{\mathrm H}\bm{U}_{p}=\bm{I}_{p},
    \qquad
    \bm{V}_{p}^{\mathrm H}\bm{V}_{p}=\bm{I}_{p},
    \label{eq:hkfed_economy_svd}
\end{equation}
where $\bm{\Sigma}_{p}$ contains the positive singular values. 

The following lemma gives the reduced least-squares result required in the
proof of the finite-dimensional approximation.

\begin{lemma}[Unique reduced least-squares solution]
\label{lem:unique_reduced_least_squares}
Let $\bm{C}\in\mathbb{F}^{p\times n}$,
$\bm{V}\in\mathbb{F}^{n\times p}$ with
$\bm{V}^{\mathrm H}\bm{V}=\bm{I}_p$, and let
$\bm{\Sigma}\in\mathbb{R}^{p\times p}$ be diagonal with strictly positive
diagonal entries. Then the problem
\begin{equation}
    \underset{\widetilde{\bm{K}}\in\mathbb{F}^{p\times p}}
    {\operatorname{minimize}}
    \;
    \left\|
        \bm{C}-\widetilde{\bm{K}}\bm{\Sigma}\bm{V}^{\mathrm H}
    \right\|_F^2
    \label{eq:reduced_least_squares_lemma}
\end{equation}
has the unique solution
\begin{equation}
    \widetilde{\bm{K}}
    =\bm{C}\bm{V}\bm{\Sigma}^{-1}.
    \label{eq:reduced_least_squares_solution}
\end{equation}
\end{lemma}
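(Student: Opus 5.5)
The plan is to complete the orthogonal decomposition of the residual using the unitary structure supplied by $\bm V$. Set $\bm M=\bm\Sigma\bm V^{\mathrm H}\in\mathbb F^{p\times n}$. Because $\bm\Sigma$ is invertible and $\bm V^{\mathrm H}\bm V=\bm I_p$, the product $\bm M\bm M^{\mathrm H}=\bm\Sigma^2$ is Hermitian positive definite. Hence $\bm M$ has full row rank $p$.

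The objective is separable by rows. Writing $\widetilde{\bm K}$ by its rows $\bm k_i^{\mathrm H}$ and $\bm C$ by its rows $\bm c_i^{\mathrm H}$, the Frobenius objective becomes
\[
\sum_{i=1}^{p}\bigl\|\bm c_i^{\mathrm H}-\bm k_i^{\mathrm H}\bm M\bigr\|_2^2 .
\]
Each row term is an ordinary full-column-rank least-squares problem in $\bm k_i$.

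The cleanest route is a completing-the-square identity. Set $\widetilde{\bm K}_\star=\bm C\bm V\bm\Sigma^{-1}$ and $\bm P=\bm V\bm V^{\mathrm H}$, the orthogonal projector onto $\operatorname{range}(\bm V)$. Observe that
\[
\widetilde{\bm K}_\star\bm M=\bm C\bm V\bm\Sigma^{-1}\bm\Sigma\bm V^{\mathrm H}=\bm C\bm P.
\]
For an arbitrary $\widetilde{\bm K}$, split the residual as
\[
\bm C-\widetilde{\bm K}\bm M=\bm C(\bm I-\bm P)+(\widetilde{\bm K}_\star-\widetilde{\bm K})\bm\Sigma\bm V^{\mathrm H}.
\]
The two summands are Frobenius-orthogonal, because
\[
(\bm I-\bm P)\bm V=\bm V-\bm V\bm V^{\mathrm H}\bm V=\bm 0 .
\]
By Pythagoras, and using $\|\bm Z\bm V^{\mathrm H}\|_F=\|\bm Z\|_F$ (which follows from $\bm V^{\mathrm H}\bm V=\bm I_p$), this gives
\[
\|\bm C-\widetilde{\bm K}\bm M\|_F^2=\|\bm C(\bm I-\bm P)\|_F^2+\|(\widetilde{\bm K}_\star-\widetilde{\bm K})\bm\Sigma\|_F^2 .
\]

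The first term does not depend on $\widetilde{\bm K}$. The second term is nonnegative, and it vanishes if and only if $(\widetilde{\bm K}_\star-\widetilde{\bm K})\bm\Sigma=\bm 0$. Since $\bm\Sigma$ has strictly positive diagonal and is therefore invertible, this holds if and only if $\widetilde{\bm K}=\widetilde{\bm K}_\star$. This yields both existence and uniqueness of the minimizer \eqref{eq:reduced_least_squares_solution}.

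The only delicate point is verifying the orthogonality of the cross term, namely
\[
\operatorname{Re}\operatorname{tr}\bigl[(\bm I-\bm P)\bm C^{\mathrm H}(\widetilde{\bm K}_\star-\widetilde{\bm K})\bm\Sigma\bm V^{\mathrm H}\bigr]=0 .
\]
This should be handled with care in the complex case $\mathbb F=\mathbb C$, using cyclicity of the trace together with $\bm V^{\mathrm H}(\bm I-\bm P)=\bm 0$. As a cross-check, one could instead set the Wirtinger gradient to zero, which gives the normal equations $\widetilde{\bm K}\bm M\bm M^{\mathrm H}=\bm C\bm M^{\mathrm H}$, that is, $\widetilde{\bm K}\bm\Sigma^2=\bm C\bm V\bm\Sigma$. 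These recover the same unique solution; strict convexity follows from $\bm M\bm M^{\mathrm H}\succ0$.
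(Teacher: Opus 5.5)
Your proof is correct but takes a different route from the paper. The paper argues variationally. It writes the normal equation $(\widetilde{\bm K}\bm\Sigma\bm V^{\mathrm H}-\bm C)\bm V\bm\Sigma=\bm 0$ and reduces it, via $\bm V^{\mathrm H}\bm V=\bm I_p$, to $\widetilde{\bm K}\bm\Sigma^2=\bm C\bm V\bm\Sigma$. It solves this by invertibility of $\bm\Sigma$, then invokes strict convexity (from $\bm\Sigma^2$ positive definite) to conclude the stationary point is the unique global minimizer. That is essentially your closing cross-check.

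You instead complete the square. The Pythagorean identity
\[
\|\bm C-\widetilde{\bm K}\bm\Sigma\bm V^{\mathrm H}\|_F^2=\|\bm C(\bm I-\bm V\bm V^{\mathrm H})\|_F^2+\|(\widetilde{\bm K}_\star-\widetilde{\bm K})\bm\Sigma\|_F^2
\]
gives existence, uniqueness, and the optimal residual value in one algebraic step. Because no differentiation is involved, the case $\mathbb F=\mathbb C$ needs no Wirtinger calculus or convexity argument over real and imaginary parts. The cross term is also not actually delicate: cyclicity of the trace and $\bm V^{\mathrm H}(\bm I-\bm P)=\bm 0$ make the whole trace vanish, not just its real part.

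Your splitting is the column-side counterpart of the one the paper itself uses with $\bm U_p\bm U_p^{\mathrm H}$ in the proof of Proposition~\ref{prop:finite_koopman_approximation}. So your argument could absorb the lemma into that proof. The paper's route is shorter but leaves the derivative and convexity steps implicit; yours is self-contained and states the minimum value explicitly.
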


\begin{proof}
The normal equation associated with
\eqref{eq:reduced_least_squares_lemma} is
\[
    \bigl(
        \widetilde{\bm{K}}\bm{\Sigma}\bm{V}^{\mathrm H}-\bm{C}
    \bigr)
    \bm{V}\bm{\Sigma}=\bm{0}.
\]
Since $\bm{V}^{\mathrm H}\bm{V}=\bm{I}_p$, this equation reduces to
\[
    \widetilde{\bm{K}}\bm{\Sigma}^{2}
    =\bm{C}\bm{V}\bm{\Sigma}.
\]
The matrix $\bm{\Sigma}$ is invertible; hence
\eqref{eq:reduced_least_squares_solution} follows. Moreover,
$\bm{\Sigma}^{2}$ is positive definite, so the quadratic objective in
\eqref{eq:reduced_least_squares_lemma} is strictly convex with respect to
$\widetilde{\bm{K}}$. Therefore, the stationary point is the unique global
minimizer.
\end{proof}

\begin{proposition}[Finite-dimensional approximation of the Koopman operator]
\label{prop:finite_koopman_approximation}
Let $\bm{H}_0,\bm{H}_1\in\mathbb{F}^{q\times(K-1)}$ be the
serialized-shift Hankel matrices defined by
\eqref{eq:shifted_hankel_pair}, and let
$\bm{H}_0=\bm{U}_p\bm{\Sigma}_p\bm{V}_p^{\mathrm H}$ be the
rank-$p$ economy-size SVD in \eqref{eq:hkfed_economy_svd}, with $p\geq1$.
The finite-dimensional approximation of the Koopman operator on the
data-supported subspace $\operatorname{range}(\bm{U}_p)$ is defined as the
solution of the finite-data least-squares problem
\begin{equation}
    \widetilde{\bm{K}}_p
    =
    \underset{\widetilde{\bm{K}}\in\mathbb{F}^{p\times p}}
    {\operatorname{arg\,min}}
    \left\|
        \bm{H}_1
        -\bm{U}_p\widetilde{\bm{K}}
         \bm{U}_p^{\mathrm H}\bm{H}_0
    \right\|_F^2.
    \label{eq:projected_koopman_least_squares}
\end{equation}
This problem has a unique solution, explicitly given by
\begin{equation}
    \widetilde{\bm{K}}_{p}
    =
    \bm{U}_{p}^{\mathrm H}
    \bm{H}_{1}
    \bm{V}_{p}\bm{\Sigma}_{p}^{-1}
    \in\mathbb{F}^{p\times p}.
    \label{eq:finite_koopman_approximation}
\end{equation}
\end{proposition}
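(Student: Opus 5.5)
The plan is to reduce the projected problem \eqref{eq:projected_koopman_least_squares} to the reduced least-squares problem of Lemma~\ref{lem:unique_reduced_least_squares}. The reduction uses the orthogonal splitting of $\mathbb F^q$ into $\operatorname{range}(\bm U_p)$ and its orthogonal complement, together with the SVD \eqref{eq:hkfed_economy_svd}.

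\textbf{Step 1: simplify the projected input.} Substituting $\bm H_0=\bm U_p\bm\Sigma_p\bm V_p^{\mathrm H}$ and using $\bm U_p^{\mathrm H}\bm U_p=\bm I_p$ gives
\[
\bm U_p^{\mathrm H}\bm H_0=\bm\Sigma_p\bm V_p^{\mathrm H}.
\]
Hence the residual becomes $\bm H_1-\bm U_p\widetilde{\bm K}\bm\Sigma_p\bm V_p^{\mathrm H}$.

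\textbf{Step 2: split the residual.} Let $\bm P=\bm U_p\bm U_p^{\mathrm H}$ be the orthogonal projector onto $\operatorname{range}(\bm U_p)$. Write $\bm H_1=\bm P\bm H_1+(\bm I-\bm P)\bm H_1$. The second term is Frobenius-orthogonal to every matrix whose columns lie in $\operatorname{range}(\bm U_p)$, and $\bm U_p\widetilde{\bm K}\bm\Sigma_p\bm V_p^{\mathrm H}$ is such a matrix. Pythagoras in the Frobenius inner product then gives
\[
\|\bm H_1-\bm U_p\widetilde{\bm K}\bm\Sigma_p\bm V_p^{\mathrm H}\|_F^2
=\|(\bm I-\bm P)\bm H_1\|_F^2+\|\bm U_p(\bm U_p^{\mathrm H}\bm H_1-\widetilde{\bm K}\bm\Sigma_p\bm V_p^{\mathrm H})\|_F^2 .
\]
Because $\bm U_p$ has orthonormal columns, left multiplication by it preserves the Frobenius norm. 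The second term therefore equals $\|\bm U_p^{\mathrm H}\bm H_1-\widetilde{\bm K}\bm\Sigma_p\bm V_p^{\mathrm H}\|_F^2$.

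\textbf{Step 3: apply the lemma.} The first term does not depend on $\widetilde{\bm K}$. Minimizing \eqref{eq:projected_koopman_least_squares} is therefore equivalent, with the same set of minimizers, to minimizing $\|\bm C-\widetilde{\bm K}\bm\Sigma\bm V^{\mathrm H}\|_F^2$. Here
\[
\bm C=\bm U_p^{\mathrm H}\bm H_1\in\mathbb F^{p\times(K-1)},\qquad \bm V=\bm V_p,\qquad \bm\Sigma=\bm\Sigma_p,\qquad n=K-1 .
\]
The hypotheses of Lemma~\ref{lem:unique_reduced_least_squares} hold: $\bm V_p^{\mathrm H}\bm V_p=\bm I_p$, and $\bm\Sigma_p$ is real diagonal with positive entries because only the positive singular spectrum is retained and $p\ge1$. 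The lemma then yields existence, uniqueness and the formula $\widetilde{\bm K}_p=\bm U_p^{\mathrm H}\bm H_1\bm V_p\bm\Sigma_p^{-1}$, which is \eqref{eq:finite_koopman_approximation}.

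The only step needing real care is the orthogonality in Step 2: the cross term $\operatorname{Re}\operatorname{tr}\bigl(((\bm I-\bm P)\bm H_1)^{\mathrm H}\bm U_p\bm M\bigr)$ must vanish for every $\bm M$. It does, since $(\bm I-\bm P)\bm U_p=\bm 0$; this has to hold for both $\mathbb F=\mathbb R$ and $\mathbb F=\mathbb C$. Everything else is direct substitution.
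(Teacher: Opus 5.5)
Your proposal is correct and follows the paper's proof essentially step for step: the Pythagorean splitting with $\bm P_p=\bm U_p\bm U_p^{\mathrm H}$, the identity $\bm U_p^{\mathrm H}\bm H_0=\bm\Sigma_p\bm V_p^{\mathrm H}$, and an application of Lemma~\ref{lem:unique_reduced_least_squares} with $\bm C=\bm U_p^{\mathrm H}\bm H_1$, $\bm V=\bm V_p$, $\bm\Sigma=\bm\Sigma_p$. The differences are cosmetic (you substitute the SVD before splitting), and you make explicit two points the paper leaves implicit: the cross term vanishes because $(\bm I-\bm P)\bm U_p=\bm 0$, and the Frobenius norm is invariant under left multiplication by $\bm U_p$.
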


\begin{proof}
Let $\bm{P}_p=\bm{U}_p\bm{U}_p^{\mathrm H}$ be the orthogonal projector
onto $\operatorname{range}(\bm{U}_p)$. For every
$\widetilde{\bm{K}}\in\mathbb{F}^{p\times p}$,
\begin{align*}
    \bm{H}_1-\bm{U}_p\widetilde{\bm{K}}
        \bm{U}_p^{\mathrm H}\bm{H}_0
    ={}&
    (\bm{I}-\bm{P}_p)\bm{H}_1 \\
    &+\bm{U}_p
    \left(
        \bm{U}_p^{\mathrm H}\bm{H}_1
        -\widetilde{\bm{K}}\bm{U}_p^{\mathrm H}\bm{H}_0
    \right).
\end{align*}
The two terms on the right-hand side are orthogonal in the Frobenius inner
product. Therefore,
\begin{align*}
    \left\|
        \bm{H}_1-\bm{U}_p\widetilde{\bm{K}}
        \bm{U}_p^{\mathrm H}\bm{H}_0
    \right\|_F^2
    ={}&
    \left\|(\bm{I}-\bm{P}_p)\bm{H}_1\right\|_F^2 \\
    &+\left\|
        \bm{U}_p^{\mathrm H}\bm{H}_1
        -\widetilde{\bm{K}}\bm{U}_p^{\mathrm H}\bm{H}_0
    \right\|_F^2,
\end{align*}
where the first term is independent of $\widetilde{\bm{K}}$. Moreover,
premultiplication of \eqref{eq:hkfed_economy_svd} by
$\bm{U}_p^{\mathrm H}$ yields
\[
    \bm{U}_p^{\mathrm H}\bm{H}_0
    =\bm{\Sigma}_p\bm{V}_p^{\mathrm H}.
\]
Consequently, minimizing \eqref{eq:projected_koopman_least_squares} is
equivalent to solving
\[
    \underset{\widetilde{\bm{K}}\in\mathbb{F}^{p\times p}}
    {\operatorname{minimize}}
    \;
    \left\|
        \bm{U}_p^{\mathrm H}\bm{H}_1
        -\widetilde{\bm{K}}\bm{\Sigma}_p\bm{V}_p^{\mathrm H}
    \right\|_F^2.
\]
Apply Lemma~\ref{lem:unique_reduced_least_squares} with
$\bm{C}=\bm{U}_p^{\mathrm H}\bm{H}_1$,
$\bm{V}=\bm{V}_p$, and $\bm{\Sigma}=\bm{\Sigma}_p$. The unique minimizer
is
\[
    \widetilde{\bm{K}}_p
    =\bm{U}_p^{\mathrm H}\bm{H}_1
     \bm{V}_p\bm{\Sigma}_p^{-1},
\]
which proves \eqref{eq:finite_koopman_approximation}.
\end{proof}

The exact and finite-data matrices are now related explicitly. Under the
invariance assumption of Proposition~\ref{prop:koopman_existence}, relation \eqref{eq:exact_hankel_koopman_relation}
gives
$\bm H_1=\bm K_H\bm H_0$. Substitution of this identity and the SVD
\eqref{eq:hkfed_economy_svd} into
\eqref{eq:finite_koopman_approximation} yields
\begin{equation}
    \widetilde{\bm K}_p
    =\bm U_p^{\mathrm H}\bm K_H\bm U_p
    \in\mathbb F^{p\times p}.
    \label{eq:exact_to_projected_koopman_relation}
\end{equation}
Thus, whenever the exact invariant representation exists,
$\widetilde{\bm K}_p$ is its orthogonal projection in the Hankel row coordinates
onto the data-supported subspace $\operatorname{range}(\bm U_p)$. For general
finite experimental data, the same notation denotes the unique projected
least-squares approximation defined by
\eqref{eq:projected_koopman_least_squares}.
This formulation
makes explicit the roles of the delay-observable space, the sampled Koopman
relation, and the orthogonal projection onto the data-supported finite-rank
subspace. It is consistent with established Hankel--DMD theory \cite{ArbabiMezic2017,SusukiSakoHikihara2017,BruntonEtAl2017,DasGiannakis2019,KambEtAl2020,KoltaiKunde2024,ColcheroEtAl2026}, while its role in the present framework is to connect that theory explicitly to the
channel-augmented scalar-entry serialization of the multichannel experimental
data.

Proposition~\ref{prop:finite_koopman_approximation} therefore identifies
$\widetilde{\bm{K}}_p$ as the unique least-squares representation of the
sampled Koopman action on the data-supported subspace. 
For general experimental data, $\widetilde{\bm{K}}_p$ may be non-normal.
Consequently, its eigenvectors need not be orthogonal and their individual
contributions need not possess an additive energy interpretation.

 The goal of the present approach is to obtain a stable orthogonal decomposition while retaining the action of the identified
Koopman approximation. Define the self-adjoint positive-semidefinite operator
\begin{equation}
    \bm{G}_{p}
    =
    \widetilde{\bm{K}}_{p}^{\mathrm H}\widetilde{\bm{K}}_{p},
    \qquad
    \bm{G}_{p}^{\mathrm H}=\bm{G}_{p},
    \qquad
    \bm{z}^{\mathrm H}\bm{G}_{p}\bm{z}
    =
    \|\widetilde{\bm{K}}_{p}\bm{z}\|_2^2\geq0 .
    \label{eq:koopman_energy_operator}
\end{equation}
By the spectral theory \cite{Davies1996}, there exist
$\mu_j\geq0$ and an orthonormal eigenvector matrix
$\bm{Q}=[\bm{q}_1,\ldots,\bm{q}_p]$ such that
\begin{equation}
\begin{aligned}
    \bm{G}_{p}\bm{Q}
    &=\bm{Q}\bm{\Lambda}_{G},
    &\qquad
    \bm{\Lambda}_{G}
    &=\operatorname{diag}(\mu_1,\ldots,\mu_p),\\
    \mu_1&\geq\mu_2\geq\cdots\geq\mu_p\geq0,
    &
    \bm{Q}^{\mathrm H}\bm{Q}&=\bm{I}_{p}.
\end{aligned}
    \label{eq:koopman_energy_eigendecomposition}
\end{equation}
The values $\mu_j$ are the squared singular values of
$\widetilde{\bm{K}}_p$ and measure its one-step energetic action along the
directions $\bm{q}_j$. In this original approach, Phase~I orders the spectrum of the
self-adjoint Koopman energy operator $\bm{G}_{p}$ rather than directly ordering the
generally complex eigenvalues of $\widetilde{\bm K}_p$.

The energy spectrum is then linked intrinsically to the Koopman
spectrum because $\bm G_p$ is constructed from the same finite-dimensional
Koopman approximation. More precisely, if
$\widetilde{\bm K}_p\bm v=\widetilde\lambda\bm v$ and
$\widetilde{\bm K}_p$ is normal, then
\begin{equation}
    \bm G_p\bm v
    =\widetilde{\bm K}_p^{\mathrm H}
      \widetilde{\bm K}_p\bm v
    =|\widetilde\lambda|^2\bm v.
    \label{eq:normal_koopman_energy_spectral_link}
\end{equation}

\begin{remark}
In the normal case, the Koopman and energy directions can be chosen
identically and their eigenvalues satisfy
$\mu_j=|\widetilde\lambda_j|^2$. For a
non-normal $\widetilde{\bm K}_p$, the $\mu_j$ remain the squared singular
values of the Koopman approximation and the $\bm q_j$ are its right singular
directions.
\end{remark}

\begin{definition}[Hankel--Koopman energy modes and coefficient matrix]
\label{def:hkfed_energy_modes}
Let $\widetilde{\bm{K}}_p\in\mathbb{C}^{p\times p}$ be the reduced
finite-dimensional approximation of the Koopman operator, and let
$\bm{Q}\in\mathbb{C}^{p\times p}$ contain an orthonormal set of eigenvectors
of the positive semidefinite operator
$\bm{G}_{p}=\widetilde{\bm{K}}_p^{\mathrm H}\widetilde{\bm{K}}_p$. The
\emph{Hankel--Koopman energy modes} and their associated
finite-horizon coefficient matrix are defined by
\begin{equation}
    \bm{\Phi}
    =
    \bm{U}_{p}\bm{Q}
    =
    \begin{bmatrix}
        \bm{\phi}_1 & \cdots & \bm{\phi}_p
    \end{bmatrix}
    \in\mathbb F^{q\times p},
    \qquad
    \bm{C}
    =
    \bm{\Phi}^{\mathrm H}\bm{H}
    =
    \begin{bmatrix}
        \bm{c}_1^{\mathsf T}\\[-1mm]
        \vdots\\[-1mm]
        \bm{c}_p^{\mathsf T}
    \end{bmatrix}
    \in\mathbb F^{p\times K}.
    \label{eq:hkfed_modes_and_coefficients}
\end{equation}
For every $j=1,\ldots,p$, the associated \emph{Koopman-energy modal triplet}
is
\begin{equation}
    \mathfrak{t}_j
    =
    \left(
       \mu_j,\bm{\phi}_j,\bm{c}_j^{\mathsf T}
    \right).
    \label{eq:hkfed_modal_triplet}
\end{equation}
\end{definition}

\begin{remark}
For each $j=1,\ldots,p$, the row $\bm{c}_j^{\mathsf T}$ represents the
complete finite-horizon evolution of the coefficient associated with the
mode $\bm{\phi}_j$. Equivalently, every column of $\bm{C}$ is the
reduced-coordinate vector of the corresponding Hankel snapshot and therefore
plays the same role as $\bm a_\ell$ in
\eqref{eq:experimental_rom}.
The column index is the serialized-delay position $\ell$, rather than the
physical sample index $k$. Because the channel--time serialization is fixed
and invertible, these ordered coefficient rows encode the progression of the
complete multichannel record and are termed temporal coefficient series below.

The three entries of $\mathfrak t_j$ are inseparable: the eigenvector
$\bm q_j$ associated with $\mu_j$ generates $\bm\phi_j$, and projection onto
$\bm\phi_j$ generates $\bm c_j^{\mathsf T}$. Any sorting or selection must
therefore permute and retain the energy eigenvalue, energy mode, and
coefficient row together. These modes are induced by the identified Koopman
approximation through $\bm G_p$. They coincide with orthogonal Koopman
eigendirections in the normal case described by
\eqref{eq:normal_koopman_energy_spectral_link}; in the general non-normal
case, they are the corresponding orthogonal Koopman-energy directions.
\end{remark}

A central innovation of the proposed decomposition is the construction of orthogonal modes, unlike classical DMD modes, which are generally nonorthogonal. Orthogonality allows the contribution and energy of each mode to be clearly identified and supports numerically stable coefficient recovery.

\begin{proposition}[Orthogonality of the Hankel--Koopman energy modes]
\label{prop:hkfed_mode_orthogonality}
The modes defined by \eqref{eq:hkfed_modes_and_coefficients} are orthonormal
with respect to the Hankel-space inner product and satisfy
\begin{equation}
\langle \bm{x},\bm{y}\rangle
    :=
    \bm{x}^{\mathrm H}\bm{y},
    \qquad
    \bm{\Phi}^{\mathrm H}\bm{\Phi}
    =
    \bm{I}_p.
    \label{eq:hkfed_orthogonality}
\end{equation}
Consequently, $\bm{C}$ is the unique coefficient matrix of the orthogonal
projection of $\bm{H}$ onto
$\operatorname{Range}(\bm{\Phi})$, and
\begin{equation}
    \widehat{\bm{H}}_{p}
    =\bm{\Phi}\bm{C}
    =\bm{\Phi}\bm{\Phi}^{\mathrm H}\bm{H}.
    \label{eq:full_hkfed_projection}
\end{equation}
\end{proposition}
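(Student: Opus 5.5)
The plan is to compute $\bm\Phi^{\mathrm H}\bm\Phi$ directly from the factorization $\bm\Phi=\bm U_p\bm Q$ in \eqref{eq:hkfed_modes_and_coefficients}. The economy-size SVD \eqref{eq:hkfed_economy_svd} gives $\bm U_p^{\mathrm H}\bm U_p=\bm I_p$, and the spectral decomposition \eqref{eq:koopman_energy_eigendecomposition} of the self-adjoint operator $\bm G_p$ gives $\bm Q^{\mathrm H}\bm Q=\bm I_p$. Hence
\[
    \bm\Phi^{\mathrm H}\bm\Phi=\bm Q^{\mathrm H}\bm U_p^{\mathrm H}\bm U_p\bm Q=\bm Q^{\mathrm H}\bm Q=\bm I_p,
\]
which is \eqref{eq:hkfed_orthogonality}. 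Entrywise this reads $\langle\bm\phi_i,\bm\phi_j\rangle=\delta_{ij}$ in the Euclidean Hankel-space inner product.

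For the projection statement, I would first note that, since $\bm Q$ is unitary, $\operatorname{Range}(\bm\Phi)=\operatorname{Range}(\bm U_p)$ and $\bm\Phi\bm\Phi^{\mathrm H}=\bm U_p\bm Q\bm Q^{\mathrm H}\bm U_p^{\mathrm H}=\bm U_p\bm U_p^{\mathrm H}$. Thus $\bm P=\bm\Phi\bm\Phi^{\mathrm H}$ is Hermitian and idempotent (idempotence uses $\bm\Phi^{\mathrm H}\bm\Phi=\bm I_p$), so it is the orthogonal projector onto $\operatorname{Range}(\bm\Phi)$.

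Next, I would characterize the coefficient matrix variationally. The orthogonal projection of $\bm H$ columnwise onto $\operatorname{Range}(\bm\Phi)$ is the matrix $\bm\Phi\bm Z$ minimizing $\|\bm H-\bm\Phi\bm Z\|_F^2$ over $\bm Z\in\mathbb F^{p\times K}$. I would split
\[
    \bm H-\bm\Phi\bm Z=(\bm I-\bm P)\bm H+\bm\Phi\bigl(\bm\Phi^{\mathrm H}\bm H-\bm Z\bigr),
\]
with the two terms Frobenius-orthogonal, exactly as in the proof of Proposition~\ref{prop:finite_koopman_approximation}. Using $\bm\Phi^{\mathrm H}\bm\Phi=\bm I_p$, the second term has norm $\|\bm\Phi^{\mathrm H}\bm H-\bm Z\|_F$. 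The minimizer is therefore $\bm Z=\bm\Phi^{\mathrm H}\bm H=\bm C$.

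Uniqueness of the coefficients follows because $\bm\Phi$ has full column rank. If $\bm\Phi\bm Z_1=\bm\Phi\bm Z_2$, left-multiplying by $\bm\Phi^{\mathrm H}$ gives $\bm Z_1=\bm Z_2$. Finally, \eqref{eq:full_hkfed_projection} is immediate: $\widehat{\bm H}_p=\bm\Phi\bm C=\bm\Phi\bm\Phi^{\mathrm H}\bm H$.

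There is no real obstacle here. The only points needing care are the following:
\begin{itemize}
\item $\bm Q$ is square, hence unitary, so $\bm Q\bm Q^{\mathrm H}=\bm I_p$ also holds. This is what yields $\operatorname{Range}(\bm\Phi)=\operatorname{Range}(\bm U_p)$.
\item The scalar field: when $\mathbb F=\mathbb R$, $\bm G_p$ is real symmetric and $\bm Q$ may be taken real orthogonal, so all adjoints reduce to transposes.
\end{itemize}
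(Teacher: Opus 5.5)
Your proposal is correct and follows the paper's proof: the paper also computes $\bm\Phi^{\mathrm H}\bm\Phi=\bm Q^{\mathrm H}\bm U_p^{\mathrm H}\bm U_p\bm Q=\bm Q^{\mathrm H}\bm Q=\bm I_p$, and for the coefficient and projection formulas it simply cites the standard orthogonal projection theorem in the delay space. Your Frobenius-orthogonal splitting, the minimization giving $\bm Z=\bm\Phi^{\mathrm H}\bm H$, and the full-column-rank uniqueness argument write out that cited step explicitly.
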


\begin{proof}
Using \eqref{eq:hkfed_modes_and_coefficients}, the orthogonality of
$\bm{U}_p$ in \eqref{eq:hkfed_economy_svd}, and the orthogonality of $\bm{Q}$ in
\eqref{eq:koopman_energy_eigendecomposition}, one obtains
\begin{align}
    \bm{\Phi}^{\mathrm H}\bm{\Phi}
    &=
    \bm{Q}^{\mathrm H}\bm{U}_p^{\mathrm H}\bm{U}_p\bm{Q}
    =\bm{Q}^{\mathrm H}\bm{Q}
    =\bm{I}_p .
    \label{eq:proof_hkfed_orthogonality}
\end{align}
The coefficient and projection formulas then follow from the standard
orthogonal projection theorem in the delay space.
\end{proof}

\subsubsection{Finite-Horizon Modal Energy Criterion}

This subsection introduces a novel finite-horizon modal energy criterion for identifying and selecting the dominant Hankel--Koopman energy modes and their associated temporal coefficients for the construction of the reduced-order model.

Instantaneous coefficient magnitude alone does not distinguish a large but
rapidly attenuated component from a moderately excited component that persists
under the identified evolution. The proposed criterion combines these two
effects. Let $L\in\mathbb{N}_0$ denote the \emph{operator horizon}, namely the
number of energy-operator applications over which persistence is accumulated.
This integer is conceptually distinct from both the duration
$T_{\mathrm{fin}}$ of the finite modeling horizon and an energy-retention
percentage.

\begin{definition}[Finite-horizon persistence factor]
\label{def:finite_horizon_persistence}
Let $\mu_j\geq 0$ be the eigenvalue of the Koopman energy operator
$\bm{G}_p$ associated with the energy direction $\bm{q}_j$, and let
$L\in\mathbb{N}_0$ be a prescribed operator horizon. The
\emph{finite-horizon persistence factor} of the $j$th energy direction is
defined by
\begin{equation}
    P_j^{(L)}
    =
    \sum_{\ell=0}^{L}\mu_j^{\ell}
    =
    \begin{cases}
        \displaystyle
        \frac{1-\mu_j^{L+1}}{1-\mu_j},
        & \mu_j\neq1,\\[3mm]
        L+1,
        & \mu_j=1 .
    \end{cases}
    \label{eq:finite_horizon_persistence}
\end{equation}
It quantifies the accumulated energetic action of that direction over the
$L+1$ operator levels $\ell=0,\ldots,L$.
\end{definition}

\begin{definition}[Finite-horizon modal energy]
\label{def:finite_horizon_modal_energy}
Let $\bm{c}_j^{\mathsf T}$ be the temporal coefficient row associated with the
$j$th orthogonal Hankel--Koopman energy mode. The \emph{coefficient energy} of
this mode over the recorded experiment is
\begin{equation}
    \alpha_j
    =
    \|\bm{c}_j^{\mathsf T}\|_2^2,
    \label{eq:hkfed_coefficient_energy}
\end{equation}
and its \emph{finite-horizon modal energy} over operator horizon $L$ is defined
as
\begin{equation}
    E_j^{(L)}
    =
    \alpha_jP_j^{(L)}
    =
    \|\bm{c}_j^{\mathsf T}\|_2^2
    \sum_{\ell=0}^{L}\mu_j^{\ell}.
    \label{eq:hkfed_modal_energy}
\end{equation}
Thus, $\alpha_j$ measures the contribution of the mode to the recorded
experiment, whereas $P_j^{(L)}$ measures its persistence under the reduced
Koopman energy operator.
\end{definition}

\begin{theorem}[Finite-horizon Hankel--Koopman energy decomposition]
\label{thm:finite_horizon_hk_energy}
Let $\bm{G}_p$, $\bm{Q}$, $\bm{\Lambda}_G$, and $\bm{C}$ be defined by
\eqref{eq:koopman_energy_operator}--\eqref{eq:hkfed_modes_and_coefficients}.
Then the accumulated quadratic energy
\begin{equation}
    \mathscr{J}_{L}(\bm{C})
    =
    \sum_{\ell=0}^{L}
    \operatorname{trace}\!\left(
       \bm{C}^{\mathrm H}\bm{\Lambda}_{G}^{\ell}\bm{C}
    \right)
    \label{eq:accumulated_hkfed_energy}
\end{equation}
admits the additive modal decomposition
\begin{equation}
    \mathscr{J}_{L}(\bm{C})
    =
    \sum_{j=1}^{p}E_j^{(L)} .
    \label{eq:additive_hkfed_energy}
\end{equation}
Therefore, sorting the modes by decreasing $E_j^{(L)}$ orders the orthogonal
delay-coordinate structures jointly by measured-data relevance and
finite-horizon energetic persistence.
\end{theorem}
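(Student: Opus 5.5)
The plan is to compute the trace in \eqref{eq:accumulated_hkfed_energy} one operator level at a time, using the diagonal structure of $\bm\Lambda_G$ from \eqref{eq:koopman_energy_eigendecomposition}. After that, the sums over $\ell$ and $j$ are exchanged and the geometric factor of Definition~\ref{def:finite_horizon_persistence} is recognized.

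\textbf{Trace at a fixed level.} For fixed $\ell\in\{0,\ldots,L\}$, $\bm\Lambda_G^{\ell}=\operatorname{diag}(\mu_1^{\ell},\ldots,\mu_p^{\ell})$, with the convention $\mu_j^0=1$ even when $\mu_j=0$, so that $\bm\Lambda_G^0=\bm I_p$. Writing $\bm C$ by its rows $\bm c_j^{\mathsf T}$ as in \eqref{eq:hkfed_modes_and_coefficients}, we have
\[
\bm C^{\mathrm H}\bm\Lambda_G^{\ell}\bm C=\sum_{j=1}^{p}\mu_j^{\ell}\,\overline{\bm c_j}\,\bm c_j^{\mathsf T},
\]
which is a sum of weighted rank-one outer products. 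Because $\mu_j^\ell$ is real and nonnegative, $\bm\Lambda_G^{\ell}$ is Hermitian and no conjugation issue arises. By linearity of the trace and the identity $\operatorname{trace}(\overline{\bm c_j}\bm c_j^{\mathsf T})=\bm c_j^{\mathsf T}\overline{\bm c_j}=\|\bm c_j^{\mathsf T}\|_2^2=\alpha_j$, this gives
\[
\operatorname{trace}(\bm C^{\mathrm H}\bm\Lambda_G^{\ell}\bm C)=\sum_{j=1}^{p}\mu_j^{\ell}\alpha_j .
\]
Equivalently, cyclicity of the trace gives $\operatorname{trace}(\bm\Lambda_G^{\ell}\bm C\bm C^{\mathrm H})$, and the diagonal entries of $\bm C\bm C^{\mathrm H}$ are exactly $\alpha_j$.

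\textbf{Summation over levels.} Summing over $\ell=0,\ldots,L$ and interchanging the two finite sums gives
\[
\mathscr J_L(\bm C)=\sum_{j=1}^{p}\alpha_j\sum_{\ell=0}^{L}\mu_j^{\ell}=\sum_{j=1}^{p}\alpha_jP_j^{(L)}=\sum_{j=1}^{p}E_j^{(L)},
\]
using \eqref{eq:finite_horizon_persistence} and \eqref{eq:hkfed_modal_energy}. This is \eqref{eq:additive_hkfed_energy}. The closed form of $P_j^{(L)}$ plays no role here; only the defining sum is used.

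\textbf{Ordering claim.} Every $E_j^{(L)}\geq0$, since $\alpha_j\geq0$ and $\mu_j\geq0$. Let $\pi$ be a permutation that sorts the $E_j^{(L)}$ in decreasing order. Applying $\pi$ simultaneously to $\bm Q$, $\bm\Lambda_G$ and the rows of $\bm C$ leaves $\bm\Phi^{\mathrm H}\bm\Phi=\bm I_p$ unchanged, by Proposition~\ref{prop:hkfed_mode_orthogonality}. It also leaves $\mathscr J_L$ unchanged, since it amounts to replacing $\bm C$ by $\bm P\bm C$ and $\bm\Lambda_G$ by $\bm P\bm\Lambda_G\bm P^{\mathsf T}$ for a permutation matrix $\bm P$. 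The sorted list is therefore an additive ranking of the total energy, and each term combines the data factor $\alpha_j$ with the persistence factor $P_j^{(L)}$.

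The main point of care, rather than a real obstacle, is the index bookkeeping in the first step. The row-versus-column and conjugation conventions for $\bm C$, whose rows are written $\bm c_j^{\mathsf T}$, must be handled so that the diagonal entries come out as $\|\bm c_j^{\mathsf T}\|_2^2$. The convention $0^0=1$ must also be fixed at level $\ell=0$. The final sentence of the theorem is interpretive, and is justified by exhibiting this factorization rather than by any further inequality.
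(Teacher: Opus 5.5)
Your proposal is correct and matches the paper's proof: the paper also uses the diagonality of $\bm\Lambda_G$ to get $\operatorname{trace}(\bm C^{\mathrm H}\bm\Lambda_G^{\ell}\bm C)=\sum_{j=1}^{p}\mu_j^{\ell}\|\bm c_j^{\mathsf T}\|_2^2$ at each level $\ell$, and then interchanges the two finite sums. Your outer-product bookkeeping, the convention $\mu_j^{0}=1$, and the permutation-invariance remark on the ordering sentence are valid elaborations of points the paper leaves implicit.
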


\begin{proof}
Since $\bm{\Lambda}_{G}$ is diagonal, for every
$\ell\in\{0,\ldots,L\}$,
\begin{equation}
    \operatorname{trace}\!\left(
       \bm{C}^{\mathrm H}\bm{\Lambda}_{G}^{\ell}\bm{C}
    \right)
    =
    \sum_{j=1}^{p}
       \mu_j^{\ell}
       \|\bm{c}_j^{\mathsf T}\|_2^2 .
    \label{eq:hkfed_energy_trace_expansion}
\end{equation}
Summing \eqref{eq:hkfed_energy_trace_expansion} over
$\ell=0,\ldots,L$ and interchanging the two finite sums yields
\eqref{eq:additive_hkfed_energy}.
\end{proof}

\begin{remark}(Importance of eigenvalues near unity)
The energy interpretation is particularly transparent for eigenvalues near
unity. If $0\leq\mu_j\leq1$, then
$P_j^{(L)}\leq L+1$, with equality if and only if $\mu_j=1$. Hence a unit
eigenvalue identifies a direction whose norm is preserved by
$\widetilde{\bm{K}}_p$, and its accumulated energy increases linearly with the
horizon. Values $\mu_j\approx1$ describe nearly persistent directions and are
increasingly favored as $L$ grows. If $\mu_j<1$, the persistence factor remains
bounded as $L\rightarrow\infty$, whereas $\mu_j>1$ indicates an amplifying
direction. Such a direction is not discarded automatically; its relevance is
assessed over the prescribed finite horizon, which is the appropriate setting
for experimental records of finite duration. Multiple eigenvalues equal or
close to one may represent several mutually orthogonal delay structures
whose energy is preserved over the observed dynamics.
\end{remark}

\subsubsection{Koopman-Energy Triplet Selection and Reduced Reconstruction}

Let $\pi$ be a permutation such that
\begin{equation}
    E_{\pi_1}^{(L)}
    \geq E_{\pi_2}^{(L)}
    \geq\cdots\geq
    E_{\pi_p}^{(L)} .
    \label{eq:hkfed_energy_ordering}
\end{equation}
For a candidate dimension $r\in\{1,\ldots,p\}$, define
\begin{equation}
    \bm{\Lambda}_{G,r}
    =
    \operatorname{diag}
    \left(
        \mu_{\pi_1},\ldots,\mu_{\pi_r}
    \right),
    \quad
    \bm{\Phi}_r
    =
    \begin{bmatrix}
        \bm{\phi}_{\pi_1}&\cdots&\bm{\phi}_{\pi_r}
    \end{bmatrix},
    \quad
    \bm{C}_r
    =
    \begin{bmatrix}
        \bm{c}_{\pi_1}^{\mathsf T}\\
        \vdots\\
        \bm{c}_{\pi_r}^{\mathsf T}
    \end{bmatrix},
    \quad
    \widehat{\bm{H}}_r=\bm{\Phi}_r\bm{C}_r .
    \label{eq:reduced_hankel_reconstruction}
\end{equation}
Equivalently, the rank-$r$ candidate is the ordered collection of the first
$r$ modal triplets,
\begin{equation}
    \mathfrak T_r
    =
    \left(
       \bm{\Lambda}_{G,r},\bm{\Phi}_r,\bm C_r
    \right)
    =
    \left(
       \mathfrak t_{\pi_1},\ldots,\mathfrak t_{\pi_r}
    \right).
    \label{eq:reduced_hkfed_triplet}
\end{equation}

Thus, $r$ is the cardinality of a complete Koopman-energy candidate.
The cumulative energy fraction
\begin{equation}
    \rho_r^{(L)}
    =
    \frac{\sum_{\ell=1}^{r}E_{\pi_\ell}^{(L)}}
         {\sum_{j=1}^{p}E_j^{(L)}}
    \label{eq:cumulative_hkfed_energy}
\end{equation}
quantifies the proportion of finite-horizon energy captured by the first
$r$ modes in the prescribed ordering.  A principal role of Phase~I is to determine the
smallest number of Koopman-energy modes consistent with the required
reconstruction quality. When several $\mu_j$ lie close to unity, the
corresponding modes remain persistent over the finite horizon, and a high
energy-retention level may be associated with a large modal set. Phase~I
therefore integrates the finite-horizon energy ordering with a Pareto-based
selection that jointly evaluates reconstruction quality and reduced
dimension to identify the retained candidate.

For matrices of compatible size, introduce the Frobenius inner product
\begin{equation}
    \langle\bm{A},\bm{B}\rangle_F
    =
    \operatorname{trace}\!\left(\bm{A}^{\mathrm H}\bm{B}\right).
    \label{eq:hankel_frobenius_product}
\end{equation}
Its induced norm is
$\|\bm{A}\|_F=\langle\bm{A},\bm{A}\rangle_F^{1/2}$.
The reconstruction error and matrix cosine similarity of candidate $r$ are, respectively
\begin{align}
    \varepsilon_r
    &=
    \frac{
      \|\bm{H}-\widehat{\bm{H}}_r\|_F
    }{
      \|\bm{H}\|_F
    },
    \label{eq:hkfed_reconstruction_error}\\
    \gamma_r
    &=
    \frac{
      \operatorname{Re}
      \langle\bm{H},\widehat{\bm{H}}_r\rangle_F
    }{
      \|\bm{H}\|_F
      \|\widehat{\bm{H}}_r\|_F
    } .
    \label{eq:hkfed_cosine_similarity}
\end{align}

Let
\begin{equation}
    \mathcal{R}=\{1,\ldots,p\}
    \label{eq:hkfed_candidate_dimensions}
\end{equation}
denote the finite set of candidate reduced dimensions, and let $\chi_r=r/p$ be
the corresponding dimension ratio. For a weak dimensionality penalty
$\beta_{\mathrm{dim}}\geq0$, define the vector-valued objective
\begin{equation}
    \bm{F}(r)
    =
    \begin{bmatrix}
        F_1(r)\\[1mm]
        F_2(r)
    \end{bmatrix}
    =
    \begin{bmatrix}
        \varepsilon_r+\beta_{\mathrm{dim}}\chi_r\\[1mm]
        1-\gamma_r+\beta_{\mathrm{dim}}\chi_r
    \end{bmatrix}.
    \label{eq:hkfed_pareto_objectives}
\end{equation}
Thus, selection of the reduced Koopman-energy candidate $\mathfrak T_r$ is
first posed as the discrete bi-objective optimization problem
\begin{equation}
    \underset{r\in\mathcal{R}}{\operatorname{minimize}}
    \;
    \bm{F}(r)
    =
    \underset{r\in\mathcal{R}}{\operatorname{minimize}}
    \;
    \begin{bmatrix}
        F_1(r)\\
        F_2(r)
    \end{bmatrix}.
    \label{eq:hkfed_multiobjective_problem}
\end{equation}
The first objective favors accurate Hankel reconstruction, whereas the second
favors structural alignment with the experimental Hankel matrix. The common
term $\beta_{\mathrm{dim}}\chi_r$ weakly favors lower-dimensional
representations.

\begin{definition}[Pareto-optimal reduced Koopman-energy candidate]
\label{def:pareto_optimal_dimension}
For $r_a,r_b\in\mathcal{R}$, the candidate $\mathfrak T_{r_a}$ is said to
\emph{Pareto-dominate} $\mathfrak T_{r_b}$, written $r_a\prec r_b$, if
\begin{equation}
    F_i(r_a)\leq F_i(r_b)
    \quad\text{for every }i\in\{1,2\},
    \qquad
    F_{i_0}(r_a)<F_{i_0}(r_b)
    \quad\text{for at least one }i_0\in\{1,2\}.
    \label{eq:hkfed_pareto_dominance}
\end{equation}
A candidate $\mathfrak T_{r^\circ}$, indexed by
$r^\circ\in\mathcal{R}$, is \emph{Pareto-optimal} if no candidate
$\mathfrak T_s$ with $s\in\mathcal{R}$ Pareto-dominates it. Consequently, the
index set of all Pareto-optimal Koopman-energy candidates is
\begin{equation}
    \mathcal{P}
    =
    \left\{
        r\in\mathcal{R}
        \;\middle|\;
        \nexists\,s\in\mathcal{R}
        \text{ such that }s\prec r
    \right\}.
    \label{eq:hkfed_pareto_set}
\end{equation}
\end{definition}

Problem~\eqref{eq:hkfed_multiobjective_problem} generally produces a set of
non-dominated Koopman-energy candidates rather than a unique modal
representation. A second, explicitly stated decision rule is therefore used
to determine the selected candidate $\mathfrak T_{r^\star}$; the integer
$r^\star$ denotes its retained dimension.

We specify an admissible interval
$r_{\min}\leq r\leq r_{\max}$ and a preferred dimension
$r_{\mathrm{tar}}$ within this interval, and define the admissible set
\begin{equation}
    \mathcal{R}_{\mathrm{adm}}
    =
    \left\{
        r\in\mathcal{R}
        \;\middle|\;
        r_{\min}\leq r\leq r_{\max}
    \right\},
    \label{eq:hkfed_admissible_dimensions}
\end{equation}
and the final candidate set
\begin{equation}
    \mathcal{C}
    =
    \begin{cases}
        \mathcal{P}\cap\mathcal{R}_{\mathrm{adm}},
        & \mathcal{P}\cap\mathcal{R}_{\mathrm{adm}}\neq\varnothing,\\[1mm]
        \mathcal{R}_{\mathrm{adm}},
        & \mathcal{P}\cap\mathcal{R}_{\mathrm{adm}}=\varnothing.
    \end{cases}
    \label{eq:hkfed_pruned_pareto_set}
\end{equation}
For $r\in\mathcal{C}$, let
\begin{equation}
\begin{aligned}
    \widehat{\varepsilon}_r
    &=
    \frac{\varepsilon_r}
         {\max_{s\in\mathcal{C}}\varepsilon_s+\epsilon_{\mathrm{mach}}},
    \qquad
    \widehat{\delta}_r
    =
    \frac{1-\gamma_r}
         {\max_{s\in\mathcal{C}}(1-\gamma_s)+\epsilon_{\mathrm{mach}}},
    \\[2mm]
    \widehat{\chi}_r
    &=
    \begin{cases}
        \displaystyle
        \frac{|r-r_{\mathrm{tar}}|}{r_{\mathrm{tar}}},
        & \text{if } r_{\min}\leq r_{\mathrm{tar}}\leq r_{\max}\\[2mm]
        0,
        & \text{otherwise},
    \end{cases}
\end{aligned}
\label{eq:hkfed_normalized_selection_terms}
\end{equation}
where $\epsilon_{\mathrm{mach}}$ prevents division by zero.

Thus, $r_{\mathrm{tar}}\in\{r_{\min},\ldots,r_{\max}\}$ denotes the preferred
number of Hankel--Koopman energy modes. It may be chosen according to the
desired degree of spectral truncation, the available computational budget, or prior
knowledge of the effective dimension of the experimental dynamics. The
quantity $\widehat{\chi}_r$ measures the relative deviation from this target:
it vanishes at $r=r_{\mathrm{tar}}$ and increases symmetrically away from the
preferred dimension. The target acts only as a soft preference; consequently,
$r^\star$ may differ from $r_{\mathrm{tar}}$ whenever the gain in
reconstruction accuracy or matrix similarity justifies the deviation. 

With $\alpha_{\mathrm{dim}}\in[0,1]$, define the scalar decision problem
\begin{equation}
    J(r)
    =
    \frac{1-\alpha_{\mathrm{dim}}}{2}
       \bigl(\widehat{\varepsilon}_r+\widehat{\delta}_r\bigr)
    +\alpha_{\mathrm{dim}}\widehat{\chi}_r,
    \qquad
    r^{\star}
    =
    \min\!\left(
        \operatorname*{arg\,min}_{r\in\mathcal{C}}J(r)
    \right).
    \label{eq:hkfed_final_selection_score}
\end{equation}
The outer minimum makes the tie-breaking rule mathematically explicit: among
all minimizers of $J$, the candidate with the smallest cardinality is selected.
This rule avoids choosing an arbitrary point on a nearly flat Pareto front.

The output of the modal stage is the selected Koopman-energy candidate
\begin{equation}
\begin{aligned}
    \mathfrak T_{r^\star}
    &=
    \left(
       \bm\Lambda_{G,r^\star},
       \bm\Phi_{r^\star},
       \bm C_{r^\star}
    \right),
    &\quad
    \widehat{\bm{H}}_{r^\star}
    &=\bm{\Phi}_{r^\star}\bm{C}_{r^\star},
    &\quad
    \bm{\Phi}_{r^\star}^{\mathrm H}
    \bm{\Phi}_{r^\star}
    &=\bm{I}_{r^\star}.
\end{aligned}
    \label{eq:selected_hkfed_reconstruction}
\end{equation}
The rows of $\bm{C}_{r^\star}$ are the reduced coefficient time series passed
to Phase~II, whereas each column is the reduced coordinate vector at one
serialized-delay position. Since they are obtained from the Hankel matrix of
the complete serialized record, these coordinates carry information from the
interleaved measured channels and their local delay history.

\subsubsection{Anti-Diagonal Recovery of the Experimental Channels}

Modal truncation generally yields a matrix
$\widehat{\bm{H}}_{r^\star}$ that aim to approximate the exact Hankel structure.
Consequently, the scalar positions representing a common entry of the serialized record
may contain distinct approximations. A unique serialized record is obtained
by averaging these estimates along the corresponding scalar anti-diagonals.

For each serialized index $\ell\in\{1,\ldots,N_H\}$, we define
\begin{equation}
    \mathcal J_\ell
    =
    \left\{
       (i,k):
       1\leq i\leq q,\;
       1\leq k\leq K,\;
       i+k-1=\ell
    \right\}.
    \label{eq:block_antidiagonal_index_set}
\end{equation}

\begin{definition}[Uniform anti-diagonal averaging]
\label{def:uniform_antidiagonal_averaging}
Let $\widehat{\bm{H}}_{r^\star}
=\bigl[\widehat H_{i,k}\bigr]\in\mathbb F^{q\times K}$ be the reduced
Hankel reconstruction, and let $\mathcal J_\ell$ be the scalar
anti-diagonal index set in
\eqref{eq:block_antidiagonal_index_set}. The \emph{anti-diagonal multiplicity}
of serialized entry $\ell$ is
\begin{equation}
    \nu_\ell
    =
    |\mathcal J_\ell|
    =
    \min\{\ell,q,K,N_H-\ell+1\}.
    \label{eq:block_antidiagonal_multiplicity}
\end{equation}
The \emph{uniform anti-diagonal average} of
$\widehat{\bm H}_{r^\star}$ is the centered serialized sequence
$\{\widehat x_{H,\ell}\}_{\ell=1}^{N_H}$ defined by
\begin{equation}
    \widehat x_{H,\ell}
    =
    \frac{1}{\nu_\ell}
    \sum_{(i,k)\in\mathcal J_\ell}
    \widehat H_{i,k},
    \qquad \ell=1,\ldots,N_H.
    \label{eq:uniform_antidiagonal_recovery}
\end{equation}
Thus, all reconstructed occurrences of the same serialized record contribute
equally.
\end{definition}

Consequently, uniform anti-diagonal averaging is the least-squares projection
of the reduced matrix onto the set of Hankel-consistent serialized sequences.

Let
$\widehat{\bm x}_H=[\widehat x_{H,1},\ldots,
\widehat x_{H,N_H}]^{\mathsf T}$. The inverse of the serialization in
\eqref{eq:hkfed_number_hankel_columns} gives
\begin{equation}
    \widehat{\bm X}
    =\operatorname{reshape}(\widehat{\bm x}_H,m,N),
    \qquad
    \widehat{\bm Y}
    =\bar{\bm y}\bm 1_N^{\mathsf T}+\widehat{\bm X}
    =
    \begin{bmatrix}
       \widehat{\bm y}_1&\cdots&\widehat{\bm y}_N
    \end{bmatrix}.
    \label{eq:recovered_experimental_channels}
\end{equation}
Thus, the anti-diagonal projection reconstructs a vector of length $N_H=mN$,
after which inverse reshaping restores the original channel ordering and the
reference state.

\subsubsection{Algorithmic Formulation and Computational Complexity}

Algorithm \ref{alg:hkfed} presents the Hankel–Koopman finite-horizon energy decomposition (HKFED) introduced in this paper.

\begin{algorithm}[!htbp]
\caption{Hankel--Koopman finite-horizon energy decomposition (HKFED)}
\label{alg:hkfed}
\begin{algorithmic}[1]
\Require Centered or raw measurements $\bm{Y}$ on
         $\mathcal T_{\mathrm{fin}}$; mean vector  
         $\bar{\bm{y}}$; imposed Hankel delay depth $q$;
         operator horizon $L$; Pareto and dimension-selection parameters.
\Ensure Selected modes $\bm{\Phi}_{r^\star}$, reduced coefficients
        $\bm{C}_{r^\star}$, associated energy eigenvalues
        $\bm\Lambda_{G,r^\star}$, reconstructed Hankel matrix
        $\widehat{\bm{H}}_{r^\star}$, and recovered channel matrix
        $\widehat{\bm Y}$.
\State Center the measurements according to
       \eqref{eq:centered_experimental_data}.
\State Serialize the centered matrix as
       $\bm x_H=\operatorname{vec}(\bm X)$, set $N_H=mN$, and set
       $K=N_H-q+1$ according to
       \eqref{eq:hkfed_number_hankel_columns}.
\State Construct $\bm{H}$, $\bm{H}_0$, and $\bm{H}_1$ from
       \eqref{eq:multichannel_block_hankel} and
       \eqref{eq:shifted_hankel_pair}.
\State Compute the economy-size SVD of $\bm{H}_0$, determine
       $p=\operatorname{rank}(\bm{H}_0)$, and retain its positive singular
       triplets
       $\bm{H}_0=\bm{U}_p\bm{\Sigma}_p\bm{V}_p^{\mathrm H}$.
\State Form $\widetilde{\bm{K}}_p$ from
       \eqref{eq:finite_koopman_approximation}.
\State Form $\bm{G}_p=\widetilde{\bm{K}}_p^{\mathrm H}
       \widetilde{\bm{K}}_p$ and compute
       $\bm{G}_p\bm{Q}=\bm{Q}\bm{\Lambda}_G$.
\State Compute $\bm{\Phi}$ and $\bm{C}$ from
       \eqref{eq:hkfed_modes_and_coefficients}.
\For{$j=1,\ldots,p$}
    \State Compute $P_j^{(L)}$, $\alpha_j$, and $E_j^{(L)}$ from
           \eqref{eq:finite_horizon_persistence}--\eqref{eq:hkfed_modal_energy}.
\EndFor
\State Form the modal triplets
       $\mathfrak t_j=(\mu_j,\bm\phi_j,\bm c_j^{\mathsf T})$ and sort the
       complete triplets by decreasing $E_j^{(L)}$.
\For{$r=1,\ldots,p$}
    \State Update $\widehat{\bm{H}}_r$ by adding the $r$th ranked
           modal outer product.
    \State Evaluate $\varepsilon_r$, $\gamma_r$, $F_1(r)$, and $F_2(r)$.
\EndFor
\State Extract $\mathcal{P}$ from \eqref{eq:hkfed_pareto_set}, form
       $\mathcal{C}$ from \eqref{eq:hkfed_pruned_pareto_set}, and select
       the complete candidate $\mathfrak T_{r^\star}$, with its cardinality
       $r^\star$ determined by \eqref{eq:hkfed_final_selection_score}.
\State Set
       $\widehat{\bm{H}}_{r^\star}
       =\bm{\Phi}_{r^\star}\bm{C}_{r^\star}$.
\State Recover $\widehat{\bm x}_H$ by uniform scalar anti-diagonal averaging
       \eqref{eq:uniform_antidiagonal_recovery}, reshape it to
       $\widehat{\bm X}\in\mathbb F^{m\times N}$, and restore the reference
       state using \eqref{eq:recovered_experimental_channels}.
\State \Return
       $\bm{\Phi}_{r^\star}$, $\bm{C}_{r^\star}$,
       $\bm{\Lambda}_{G,r^\star}$, $\{E_j^{(L)}\}_{j=1}^{p}$,
       $r^\star$, $\widehat{\bm{H}}_{r^\star}$, and $\widehat{\bm Y}$.
\end{algorithmic}
\end{algorithm}

 Let
\begin{equation}
    s=\min\{q,K-1\},\qquad
    p=\operatorname{rank}(\bm{H}_0)\leq s .
    \label{eq:hkfed_complexity_parameters}
\end{equation}
Forming the Hankel representation requires $\mathcal O(qK)$ operations and
storage. For a dense matrix $\bm H_0\in\mathbb F^{q\times(K-1)}$, its
deterministic economy-size SVD requires
\begin{equation}
    \mathcal{O}\!\left(q(K-1)s\right)
    =
    \mathcal{O}\!\left(
      q(K-1)\min\{q,K-1\}
    \right)
    \label{eq:hkfed_economy_svd_complexity}
\end{equation}
operations. This cost depends only on the dimensions of the measured
Hankel pair.

After $p$ has been determined, forming
\eqref{eq:finite_koopman_approximation} costs
\begin{equation}
    \mathcal{O}\!\left(
       q(K-1)p+p^2\min\{q,K-1\}
    \right),
    \label{eq:hkfed_operator_complexity}
\end{equation}
where the smaller cost is obtained by choosing the more favorable order of the
two matrix products. Forming the $p\times p$ Gram operator and diagonalizing it
each require $\mathcal{O}(p^3)$ operations. Computing the modes and
coefficients, and incrementally constructing all candidate reconstructions,
requires $\mathcal{O}(qp^2+pqK)$ operations. The finite-horizon energies and
their ordering require $\mathcal{O}(pL+p\log p)$ operations, whereas direct
pairwise Pareto comparison and score evaluation require at most
$\mathcal{O}(p^2)$. Anti-diagonal recovery is linear in the number of Hankel
entries, namely $\mathcal{O}(qK)$.

The peak storage required by the Hankel data and the economy-size SVD is
\begin{equation}
    \mathcal{O}\!\left(
       qK+(q+K-1)s+s^2
    \right).
    \label{eq:hkfed_memory_complexity}
\end{equation}

 Most importantly, neither
the full $q\times q$ least-squares Koopman matrix
$\bm{H}_1\bm{H}_0^\dagger$ nor its $q\times q$ Gram operator is formed.
Algorithm~\ref{alg:hkfed} therefore uses the complete data-supported subspace
identified by the economy-size SVD; spectral truncation is introduced only after the
finite-horizon energies have been evaluated, through the selected candidate
$\mathfrak T_{r^\star}$ of cardinality $r^\star$.

\subsection{Phase II: Inverse-Calibrated Multi-Output NLARX Model}
\label{subsec:phase_nlarx}

Nonlinear autoregressive models with exogenous inputs provide flexible
data-driven representations in which the current output is expressed as a
nonlinear function of delayed outputs and present or delayed external inputs.
Such models have been employed for interpretable system identification,
nonlinear dynamical modelling, condition monitoring \cite{Gu2023,Zhao2024,Fatima2023}.

Phase~II of the proposed data-driven twin framework identifies the reduced evolution map $\mathcal{M}_r$ for the modal coefficients, introduced in \eqref{eq:reduced_discrete_dynamics}. Its state is the selected Hankel-space
coefficient vector produced by Phase~I, and its output is a coupled,
multi-output nonlinear autoregressive model, obtained by deep learning. The recursively simulated modal
coefficients are multiplied by the selected modes, returned to the
experimental channels by uniform anti-diagonal averaging, and the reduced order twin model is compared with
the measurements on a prescribed finite calibration window. 
This post-reconstruction assessment in the original measurement variables is the
defining meaning of \emph{inverse calibration} in the present work.

Let
\begin{equation}
    T_{\mathrm{start}}=t_{n_{\mathrm s}},
    \quad
    \mathcal T_H
    =
    \{t_k:k=n_{\mathrm s},\ldots,n_{\mathrm s}+H\},
    \quad
    T_{\mathrm{FH}}=H\Delta t,
    \label{eq:nlarx_finite_physical_horizon}
\end{equation}
where $H\in\mathbb{N}_0$ is the prescribed number of calibration intervals and
$n_{\mathrm s}$ is the global index of its first sampling instant. The
associated global sampling-index set is
\begin{equation}
\begin{aligned}
    \mathcal I_H
    &=
    \{n_{\mathrm s},n_{\mathrm s}+1,\ldots,n_{\mathrm s}+H\},
    &\quad N_{\mathcal I}&=H+1,
    &\quad N_{H,\mathcal I}&=mN_{\mathcal I},\\
    K_{\mathcal I}
    &=N_{H,\mathcal I}-q+1,
    &\quad \ell_{\mathrm s}&=m(n_{\mathrm s}-1)+1.
\end{aligned}
\label{eq:nlarx_finite_index_horizon}
\end{equation}
In the terminology of
Definition~\ref{def:finite_observation_forecast_horizon}, $\mathcal T_H$ is a
finite Phase~II calibration window, required to
satisfy
\begin{equation}
    \mathcal T_H\subseteq\mathcal T_{\mathrm{fin}},
    \qquad
    n_{\mathrm s}=N_Q+1,
    \qquad
    n_{\mathrm s}+H\leq N.
    \label{eq:nlarx_calibration_window_inclusion}
\end{equation}
Thus, in the coupled framework, the calibration window begins at the first
sampling instant after observation window $\mathcal T_{\mathrm{obs}}$. The window is called
\emph{admissible} when
$N_{H,\mathcal I}\geq q+1$, equivalently $K_{\mathcal I}\geq2$. Throughout
Phase~II, the subscript $\mathcal I$ denotes restriction to the index set
$\mathcal I_H$. The corresponding measurement matrix, locally serialized
centered vector, Hankel matrix, and selected coefficient matrix are
\begin{equation}
\begin{aligned}
    \bm{Y}_{\mathcal I}
    &=
    \begin{bmatrix}
      \bm y_{n_{\mathrm s}}&\cdots&\bm y_{n_{\mathrm s}+H}
    \end{bmatrix}
    \in\mathbb{F}^{m\times N_{\mathcal I}},
    \\
    \bm x_{H,\mathcal I}
    &=
    \operatorname{vec}\!\left(
       \bm Y_{\mathcal I}
       -\bar{\bm y}\bm 1_{N_{\mathcal I}}^{\mathsf T}
    \right)
    \in\mathbb F^{N_{H,\mathcal I}},
    \\
    \bm{H}_{\mathcal I}
    &=
    \begin{bmatrix}
      \bm h_{\ell_{\mathrm s}}&\cdots&
      \bm h_{\ell_{\mathrm s}+K_{\mathcal I}-1}
    \end{bmatrix}
    \in\mathbb{F}^{q\times K_{\mathcal I}},
    \\
    \bm{C}_{\mathcal I}
    &=
    \bm{C}_{r^\star}
    (:,\ell_{\mathrm s}:\ell_{\mathrm s}+K_{\mathcal I}-1)
    =
    \begin{bmatrix}
      \bm c_{\mathcal I,1}&\cdots&
      \bm c_{\mathcal I,K_{\mathcal I}}
    \end{bmatrix}
    \in\mathbb{F}^{r^\star\times K_{\mathcal I}}.
\end{aligned}
\label{eq:nlarx_local_data_blocks}
\end{equation}
Here the local coefficient index $k=1,\ldots,K_{\mathcal I}$ corresponds to
the global serialized Hankel-column index $\ell_{\mathrm s}+k-1$. Thus, the $N_{\mathcal I}$ physical
samples on $\mathcal T_H$ yield $N_{H,\mathcal I}=mN_{\mathcal I}$ serialized
entries and $K_{\mathcal I}$ local delay-coordinate states. The local
one-step shift is performed directly along the columns of the serialized-data
Hankel representation.

The following notation  distinguish the sampled-time set from the spaces associated
with it. 
We define
\begin{equation}
\begin{aligned}
    \mathscr C_{\mathcal I}
    &=\mathbb F^{r^\star\times K_{\mathcal I}},
    &\quad
    \mathscr H_{\mathcal I}
    &=\mathbb F^{q\times K_{\mathcal I}},
    &\quad
    \mathscr Y_{\mathcal I}
    &=\mathbb F^{m\times N_{\mathcal I}},
\end{aligned}
\label{eq:nlarx_associated_spaces}
\end{equation}
reppresenting the Hankel coefficient space in which the
coefficient dynamics are identified and simulated,  the
local Hankel-matrix space, and  the physical
measurement space on $\mathcal T_H$, respectively. The complete Phase~II direction is
therefore
\begin{equation}
    \mathscr C_{\mathcal I}
    \xrightarrow{\;\bm\Phi_{r^\star}\;}
    \mathscr H_{\mathcal I}
    \xrightarrow{\;\mathscr A_{\mathcal I}\;}
    \mathscr Y_{\mathcal I},
    \label{eq:nlarx_two_space_direction}
\end{equation}
and both selection procedures assess their candidates at the final,
physical-space end of this map.

\subsubsection{NLARX Dynamics in the Hankel Coefficient Space}

The rows of $\bm C_{\mathcal I}$ are temporal coefficient series, whereas its
columns are the coupled reduced states to be advanced by the model in
$\mathscr C_{\mathcal I}$. For
$\bm{\alpha}=(n_a,n_b,n_k)$, define the required history length
\begin{equation}
    \ell_{\bm{\alpha}}
    =
    \max\{n_a,n_k+n_b-1\}.
    \label{eq:nlarx_history_length}
\end{equation}
The $n_a$ block describes autoregressive memory, while the delayed
$(n_b,n_k)$ block describes inter-coefficient coupling. Relative to each
scalar coefficient equation, the remaining coefficient components are
exogenous regressors, hence the stacked vector model is a coupled
multi-output Nonlinear AutoRegressive with Exogenous inputs (NLARX) model. 

\begin{definition}[Coupled multi-output NLARX realization of
$\mathcal{M}_{r^\star}$]
\label{def:coupled_multioutput_nlarx}
Let
$\bm{\phi}_{\bm{\alpha}}:\mathbb{F}^{d_{\bm{\alpha}}}
\rightarrow\mathbb{F}^{p_{\bm{\alpha}}}$ be a fixed continuous nonlinear
feature map. For $k\geq\ell_{\bm{\alpha}}$, define
\begin{equation}
\begin{aligned}
    \bm{\gamma}_{\bm{\alpha},k}
    =
    \bm{\phi}_{\bm{\alpha}}\!\bigl(
       &\bm c_{\mathcal I,k},
        \ldots,
        \bm c_{\mathcal I,k-n_a+1};
        \\
       &\bm c_{\mathcal I,k-n_k+1},
        \ldots,
        \bm c_{\mathcal I,k-n_k-n_b+2}
    \bigr).
\end{aligned}
\label{eq:nlarx_nonlinear_regressor}
\end{equation}
A feature-linear coupled NLARX model with parameter matrix
$\bm B_{\bm{\alpha}}\in
\mathbb{F}^{r^\star\times p_{\bm{\alpha}}}$ is
\begin{equation}
    \bm c_{\mathcal I,k+1}
    =
    \bm B_{\bm{\alpha}}
    \bm{\gamma}_{\bm{\alpha},k}
    +\bm\eta_k,
    \qquad
    k=\ell_{\bm{\alpha}},\ldots,K_{\mathcal I}-1.
    \label{eq:coupled_multioutput_nlarx}
\end{equation}
The mapping
\begin{equation}
    \mathcal M_{r^\star}
    \!\left(
       \bm c_{\mathcal I,k},\ldots;
       \bm B_{\bm{\alpha}}
    \right)
    =
    \bm B_{\bm{\alpha}}
    \bm{\gamma}_{\bm{\alpha},k}
    \label{eq:nlarx_realization_of_Mr}
\end{equation}
is the NLARX realization of the selected reduced evolution map
$\mathcal M_{r^\star}$ in \eqref{eq:reduced_discrete_dynamics}.
\end{definition}

\begin{remark}
Representation (\ref{eq:nlarx_realization_of_Mr}) is linear in $\bm B_{\bm{\alpha}}$, but nonlinear in the
delayed coefficient vector. Typical components of
$\bm{\phi}_{\bm{\alpha}}$ include a constant, delayed coefficients,
componentwise powers, hyperbolic tangent features, cross-products, and
squared Euclidean norms. This separation is important: it permits a strictly
convex Tikhonov identification in the Hankel coefficient space for every fixed
structure while retaining nonlinear coupled coefficient dynamics. Inverse
calibration enters subsequently, when the freely simulated candidate is
reconstructed and scored in $\mathscr Y_{\mathcal I}$.
\end{remark}

Given the initial history
$\bm c_{\mathcal I,1},\ldots,
\bm c_{\mathcal I,\ell_{\bm{\alpha}}}$, let
\begin{equation}
    \widetilde{\bm C}_{\mathcal I}(\bm B_{\bm{\alpha}})
    =
    \mathscr S_{\mathcal I,\bm{\alpha}}
    (\bm B_{\bm{\alpha}})
    =
    \begin{bmatrix}
       \widetilde{\bm c}_{\mathcal I,1}&\cdots&
       \widetilde{\bm c}_{\mathcal I,K_{\mathcal I}}
    \end{bmatrix}
    \label{eq:nlarx_recursive_simulator}
\end{equation}
denote the free-run coefficient sequence obtained by fixing the initial
history to its observed Phase~I values and applying
\eqref{eq:coupled_multioutput_nlarx} recursively thereafter. 
The recursion is identified and simulated in $\mathscr C_{\mathcal I}$, whose
columns correspond to the serialized Hankel representation associated with calibration window
$\mathcal T_H\subseteq\mathcal T_{\mathrm{fin}}$.

\subsubsection{Inverse Reconstruction and Physical-Space Calibration}

After coefficient-space identification and free-run simulation, each
candidate is mapped back to the data channels through the same modes and
recovery rule used in Phase~I:
\begin{equation}
    \widetilde{\bm H}_{\mathcal I}
    (\bm B_{\bm{\alpha}})
    =
    \bm{\Phi}_{r^\star}
    \widetilde{\bm C}_{\mathcal I}
    (\bm B_{\bm{\alpha}})
    \in\mathbb{F}^{q\times K_{\mathcal I}}.
    \label{eq:nlarx_simulated_hankel_reconstruction}
\end{equation}
Let $\mathscr A_{\mathcal I}$ denote uniform scalar anti-diagonal averaging
on the local Hankel matrix, followed by reshaping the recovered vector of
length $N_{H,\mathcal I}$ into an $m\times N_{\mathcal I}$ centered channel
matrix, as in \eqref{eq:uniform_antidiagonal_recovery} and
\eqref{eq:recovered_experimental_channels}. The complete finite-horizon
reconstruction operator is
\begin{equation}
\begin{aligned}
    \mathscr R_{\mathcal I,\bm{\alpha}}
    (\bm B_{\bm{\alpha}})
    &=
    \bar{\bm y}\bm 1_{N_{\mathcal I}}^{\mathsf T}
    +
    \mathscr A_{\mathcal I}\!\left(
       \bm{\Phi}_{r^\star}
       \mathscr S_{\mathcal I,\bm{\alpha}}
       (\bm B_{\bm{\alpha}})
    \right)
    \\
    &=
    \widehat{\bm Y}_{\mathcal I}
    (\bm B_{\bm{\alpha}})
    \in\mathbb{F}^{m\times N_{\mathcal I}}.
\end{aligned}
\label{eq:nlarx_complete_reconstruction_operator}
\end{equation}

Accordingly, $\mathscr S_{\mathcal I,\bm\alpha}$ acts in
$\mathscr C_{\mathcal I}$, multiplication by $\bm\Phi_{r^\star}$ maps the
simulated coefficient trajectory to $\mathscr H_{\mathcal I}$, and
$\mathscr A_{\mathcal I}$ maps the resulting Hankel matrix to
$\mathscr Y_{\mathcal I}$. The comparison with $\bm Y_{\mathcal I}$ is made
only after all three operations have been completed.

\begin{definition}[Physical-space inverse-calibration score on a finite
calibration window]
\label{def:finite_horizon_inverse_calibrated_nlarx}
Let $\mathbb K_{\bm{\alpha}}\subset
\mathbb F^{r^\star\times p_{\bm{\alpha}}}$ be a nonempty admissible parameter
set and let $\lambda_{\mathrm{reg}}>0$. Define
\begin{equation}
\begin{aligned}
    \mathcal J_{\mathcal I,\lambda_{\mathrm{reg}}}
    (\bm B)
    &=
    \frac{
      \left\|
        \bm Y_{\mathcal I}
        -\mathscr R_{\mathcal I,\bm{\alpha}}(\bm B)
      \right\|_F^2
    }{
      \left\|
        \bm Y_{\mathcal I}
        -\bar{\bm y}\bm 1_{N_{\mathcal I}}^{\mathsf T}
      \right\|_F^2
      +\epsilon_{\mathrm{mach}}
    }
    +
    \lambda_{\mathrm{reg}}\|\bm B\|_{\mathrm F}^2.
\end{aligned}
\label{eq:nlarx_inverse_calibration_functional}
\end{equation}
For any coefficient-space candidate $\bm B$, the data-misfit term in
\eqref{eq:nlarx_inverse_calibration_functional} is therefore a physical-space
quantity: it compares two elements of $\mathscr Y_{\mathcal I}$ after modal
reconstruction and anti-diagonal recovery. Consequently,
$\mathcal J_{\mathcal I,\lambda_{\mathrm{reg}}}$ is called the
\emph{physical-space inverse-calibration score}. Its associated continuous
parameter-level refinement problem is
\begin{equation}
    \bm B_{\mathcal I,\bm{\alpha}}^\star
    \in
    \operatorname*{arg\,min}_{\bm B\in\mathbb K_{\bm{\alpha}}}
    \mathcal J_{\mathcal I,\lambda_{\mathrm{reg}}}(\bm B)
    \label{eq:nlarx_inverse_calibrated_model}
\end{equation}
and any minimizer is called a \emph{regularized parameter-level
inverse-calibrated refinement} on the finite calibration window
$\mathcal T_H$ in \eqref{eq:nlarx_finite_physical_horizon}.
\end{definition}

Definition~\ref{def:finite_horizon_inverse_calibrated_nlarx} provides both an
auxiliary continuous parameter-level inverse problem and the physical-space
scoring map used by the two implemented recovery procedures. In those
procedures, for every fixed structure,
$\bm B_{\lambda,\bm\alpha}$ is first obtained from the coefficient-space
Tikhonov problem \eqref{eq:nlarx_tikhonov_functional}, and
$\mathcal J_{\mathcal I,\lambda_{\mathrm{reg}}}$ is subsequently evaluated at
that candidate after complete physical reconstruction. Hence physical
measurements govern candidate selection without becoming states of the NLARX
recursion. The implemented inverse-calibrated model is the candidate selected
by one of the finite Tikhonov or Pareto rules defined below.

Thus, inverse calibration does not place the NLARX recursion in physical
space. It evaluates a coefficient-space dynamical candidate through the
physical reconstruction produced by the inverse map
\eqref{eq:nlarx_two_space_direction}. The finite calibration window is the
finite sampled-time set on which the coefficient trajectory, its modal
reconstruction, and the physical-channel error are linked. Across experiments, changing
$n_{\mathrm s}$ together with the terminal observation index
$N_Q=n_{\mathrm s}-1$ translates $\mathcal T_H$ within
$\mathcal T_{\mathrm{fin}}$ while preserving the temporal relations in
\eqref{eq:finite_temporal_set_relations}; each such window produces a locally
calibrated model governed by the same theory.

\subsubsection{Existence, Uniqueness, and Finite-Horizon Stability of the
Inverse-Calibrated NLARX Model}

Before introducing the Phase~II algorithms, we first
determine whether the 
inverse-calibration problem admits a solution and
under which additional conditions that solution is uniquely identifiable and
stable with respect to perturbations of the calibration data. This analysis is
essential because the free-run reconstruction operator in
\eqref{eq:nlarx_complete_reconstruction_operator} is generally nonlinear in
$\bm B$; consequently, the functional
\eqref{eq:nlarx_inverse_calibration_functional} need not be globally strictly
convex and need not possess a unique global minimizer without additional
assumptions.

This section presents a theoretical foundation for existence, uniqueness and inverse stability of a NLARX model in Hankel coefficient space with respect to finite-horizon calibration window.
The existence of a regularized solution on every admissible finite calibration window is proved by means of continuity and coercivity.  Uniqueness and inverse stability for a fixed NLARX
structure is established under finite-window smoothness and curvature-controlled
regularization. To the authors' knowledge, this explicit separation
of regularized existence from curvature-controlled uniqueness and inverse stability
constitutes a novel and original theoretical discussion for an
inverse-calibration score whose NLARX recursion remains in Hankel coefficient
space while its recursively simulated coefficients are reconstructed and
assessed in the original measurement space.

\begin{assumption}[Finite-window admissibility and continuity]
\label{ass:nlarx_admissibility_continuity}
The parameter set $\mathbb K_{\bm{\alpha}}$ is nonempty and closed, the feature
map $\bm\phi_{\bm{\alpha}}$ is continuous, and, for every
$\bm B\in\mathbb K_{\bm{\alpha}}$, the recursive simulation remains finite at
all local coefficient indices $k=1,\ldots,K_{\mathcal I}$ associated with
$\mathcal T_H$.
\end{assumption}

\begin{proposition}[Continuity and coercivity]
\label{prop:nlarx_continuity_coercivity}
Under Assumption~\ref{ass:nlarx_admissibility_continuity},
$\mathscr S_{\mathcal I,\bm\alpha}:\mathbb K_{\bm\alpha}
\rightarrow\mathscr C_{\mathcal I}$ and
$\mathscr R_{\mathcal I,\bm{\alpha}}:\mathbb K_{\bm\alpha}
\rightarrow\mathscr Y_{\mathcal I}$ are continuous. Moreover, for
$\lambda_{\mathrm{reg}}>0$,
$\mathcal J_{\mathcal I,\lambda_{\mathrm{reg}}}$ is continuous and coercive:
\begin{equation}
    \|\bm B\|_{\mathrm F}\rightarrow\infty
    \quad\Longrightarrow\quad
    \mathcal J_{\mathcal I,\lambda_{\mathrm{reg}}}(\bm B)
    \rightarrow\infty.
    \label{eq:nlarx_coercivity}
\end{equation}
\end{proposition}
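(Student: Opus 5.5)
The argument proceeds in three layers: continuity of the simulator $\mathscr S_{\mathcal I,\bm\alpha}$, continuity of $\mathscr R_{\mathcal I,\bm\alpha}$ via composition with linear maps, and continuity plus coercivity of $\mathcal J_{\mathcal I,\lambda_{\mathrm{reg}}}$.

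\emph{Continuity of the simulator.} We prove by finite induction on the local index $k$ that each free-run vector $\widetilde{\bm c}_{\mathcal I,k}$ is a continuous function of $\bm B\in\mathbb K_{\bm\alpha}$.

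For $k=1,\ldots,\ell_{\bm\alpha}$ the vectors equal the fixed Phase~I values, hence are constant in $\bm B$.

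For $k\geq\ell_{\bm\alpha}$, the recursion \eqref{eq:coupled_multioutput_nlarx} gives
\[
\widetilde{\bm c}_{\mathcal I,k+1}=\bm B\,\bm\phi_{\bm\alpha}\bigl(\widetilde{\bm c}_{\mathcal I,k},\ldots,\widetilde{\bm c}_{\mathcal I,k-n_k-n_b+2}\bigr).
\]
The arguments of the feature map have indices at most $k$. By the induction hypothesis they are continuous in $\bm B$, and by Assumption~\ref{ass:nlarx_admissibility_continuity} they are finite. The feature map $\bm\phi_{\bm\alpha}$ is continuous, so the regressor $\bm\gamma_{\bm\alpha,k}$ is continuous in $\bm B$. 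The product $(\bm B,\bm\gamma)\mapsto\bm B\bm\gamma$ is bilinear on finite-dimensional spaces, hence continuous, and so $\widetilde{\bm c}_{\mathcal I,k+1}$ is continuous.

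Since $K_{\mathcal I}$ is finite, the induction terminates. Stacking the $K_{\mathcal I}$ columns shows that $\mathscr S_{\mathcal I,\bm\alpha}$ is continuous into $\mathscr C_{\mathcal I}$.

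\emph{Continuity of the reconstruction operator.} Left multiplication by $\bm\Phi_{r^\star}$ is a linear map between finite-dimensional spaces. The map $\mathscr A_{\mathcal I}$ is also linear: each recovered entry is an average of matrix entries over $\mathcal J_\ell$ with the fixed weights $1/\nu_\ell$, followed by a reshape. Adding the constant matrix $\bar{\bm y}\bm 1^{\mathsf T}_{N_{\mathcal I}}$ preserves continuity. Therefore $\mathscr R_{\mathcal I,\bm\alpha}$ is continuous as a composition of continuous maps.

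\emph{Continuity and coercivity of the score.} The denominator in \eqref{eq:nlarx_inverse_calibration_functional} is a fixed constant, strictly positive thanks to $\epsilon_{\mathrm{mach}}>0$. The misfit is a squared Frobenius norm of a continuous map, and $\|\bm B\|_F^2$ is continuous. Hence $\mathcal J_{\mathcal I,\lambda_{\mathrm{reg}}}$ is continuous on $\mathbb K_{\bm\alpha}$.

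For coercivity, the misfit term is nonnegative, so
\[
\mathcal J_{\mathcal I,\lambda_{\mathrm{reg}}}(\bm B)\geq\lambda_{\mathrm{reg}}\|\bm B\|_F^2.
\]
Since $\lambda_{\mathrm{reg}}>0$, the right-hand side tends to infinity as $\|\bm B\|_F\to\infty$, which gives \eqref{eq:nlarx_coercivity}.

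\emph{Main obstacle.} The only delicate step is the induction. The simulator is defined only where the recursion stays finite, so we must use Assumption~\ref{ass:nlarx_admissibility_continuity} to guarantee this on all of $\mathbb K_{\bm\alpha}$. Continuity is then meant relative to the closed set $\mathbb K_{\bm\alpha}$. If $\bm\phi_{\bm\alpha}$ is defined on all of $\mathbb F^{d_{\bm\alpha}}$, finiteness is in fact automatic, and the assumption simply records it. Coercivity needs no control of the nonlinear dynamics at all, because the Tikhonov term alone dominates.
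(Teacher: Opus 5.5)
Your proof is correct and follows the paper's own argument. Continuity of $\mathscr S_{\mathcal I,\bm\alpha}$ comes from finite induction over the recursion, using the fixed initial history and continuity of $\bm\phi_{\bm\alpha}$ and matrix multiplication; continuity of $\mathscr R_{\mathcal I,\bm\alpha}$ from composition with $\bm\Phi_{r^\star}$, $\mathscr A_{\mathcal I}$ and the restoration of $\bar{\bm y}$; and coercivity from $\mathcal J_{\mathcal I,\lambda_{\mathrm{reg}}}(\bm B)\geq\lambda_{\mathrm{reg}}\|\bm B\|_{\mathrm F}^2$. Your remark that the regressor involves only indices at most $k$ tacitly uses $n_k\geq1$, which holds for the Phase~II order family and which the paper also assumes implicitly.
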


\begin{proof}
The prescribed initial coefficient history is independent of $\bm B$. At the
first simulated step, \eqref{eq:coupled_multioutput_nlarx} is a composition of
continuous maps of $\bm B$. If the simulated coefficients are continuous
through step $k$, continuity of $\bm\phi_{\bm{\alpha}}$ and matrix
multiplication implies continuity at step $k+1$. Induction over the finite
number $K_{\mathcal I}-\ell_{\bm{\alpha}}$ of recursive steps proves
continuity of $\mathscr S_{\mathcal I,\bm{\alpha}}$ as a map into
$\mathscr C_{\mathcal I}$. Multiplication by $\bm\Phi_{r^\star}$ maps this
trajectory into $\mathscr H_{\mathcal I}$; uniform anti-diagonal averaging
and restoration of $\bar{\bm y}$ are affine continuous operations from
$\mathscr H_{\mathcal I}$ into $\mathscr Y_{\mathcal I}$. Hence
$\mathscr R_{\mathcal I,\bm{\alpha}}$ is continuous. The functional in
\eqref{eq:nlarx_inverse_calibration_functional} is therefore continuous.
Finally,
\[
    \mathcal J_{\mathcal I,\lambda_{\mathrm{reg}}}(\bm B)
    \geq
    \lambda_{\mathrm{reg}}\|\bm B\|_{\mathrm F}^2,
\]
which proves \eqref{eq:nlarx_coercivity}.
\end{proof}

\begin{remark}
Continuity ensures that small perturbations of the NLARX parameter matrix
$\bm B$ produce small perturbations first in the recursively simulated
Hankel-space coefficients, then in the reconstructed experimental data, and,
consequently, in the physical-space objective-function value. This property
follows from the continuity of every stage of the proposed
finite-horizon reconstruction procedure. In particular, continuity of the
recursive simulation follows inductively at each time step from the continuity
of the NLARX feature map and the prescribed initial coefficient history. Since
the calibration window contains finitely many time steps, the complete simulated
trajectory depends continuously on $\bm B$.
Coercivity ensures that the objective-function value becomes arbitrarily large
as the norm of the parameter matrix increases without bound. Thus, the regularization term penalizes unbounded growth of the NLARX
coefficients and guarantees that any minimizing sequence of parameters remains
bounded.
Together, continuity and coercivity provide the
essential properties required to establish the existence of a minimizer on the
finite calibration window.
\end{remark}

\begin{assumption}[Finite-window smoothness and curvature control]
\label{ass:nlarx_finite_horizon_identifiability}
Let $\mathbb K_{\bm\alpha}$ be nonempty, compact, and convex, and keep the
selected modes, prescribed initial coefficient history, and reference vector
fixed. Assume that
$\mathscr R_{\mathcal I,\bm\alpha}$ is twice continuously differentiable
with respect to $\bm B$ on a neighborhood of $\mathbb K_{\bm\alpha}$. For
a calibration matrix $\bm Y$ near the measured matrix $\bm Y_{\mathcal I}$,
the existing reconstruction-error term in
\eqref{eq:nlarx_inverse_calibration_functional} is written as
\begin{equation}
    \mathcal J_{\mathcal I,0}(\bm B;\bm Y)
    =
    \frac{
      \left\|
        \bm Y-\mathscr R_{\mathcal I,\bm\alpha}(\bm B)
      \right\|_F^2
    }{
      \left\|
        \bm Y-\bar{\bm y}\bm 1_{N_{\mathcal I}}^{\mathsf T}
      \right\|_F^2
      +\epsilon_{\mathrm{mach}}
    }.
    \label{eq:nlarx_unregularized_misfit}
\end{equation}
Here, $\bm 1_{N_{\mathcal I}} \in \mathbb R^{N_{\mathcal I}}$ denotes
the vector of ones, and
\begin{equation}
    \bar{\bm y}\bm 1_{N_{\mathcal I}}^{\mathsf T}
    =
    \begin{bmatrix}
        \bar{\bm y} & \bar{\bm y} & \cdots & \bar{\bm y}
    \end{bmatrix}
    \in \mathbb F^{m\times N_{\mathcal I}}
\end{equation}
is the matrix obtained by repeating the reference state $\bar{\bm y}$
at every calibration sample.
The subscript $0$ indicates that the regularization term is omitted.
The already introduced $\epsilon_{\mathrm{mach}}>0$ is the
machine-precision safeguard that prevents division by zero.

Assume that there exist finite constants $\rho_{\mathcal I}\geq0$ and
$L_{\mathcal I}\geq0$ such that, for every
$\bm B\in\mathbb K_{\bm\alpha}$, parameter increment $\bm\Delta$, and
calibration matrices $\bm Y$, $\bm Y_1$, and $\bm Y_2$ in a bounded
neighborhood of $\bm Y_{\mathcal I}$,
\begin{equation}
\begin{aligned}
    D_{\bm B}^2\mathcal J_{\mathcal I,0}
    (\bm B;\bm Y)[\bm\Delta,\bm\Delta]
    &\geq
    -2\rho_{\mathcal I}\|\bm\Delta\|_{\mathrm F}^2,
    \\
    \left\|
      \nabla_{\bm B}\mathcal J_{\mathcal I,0}(\bm B;\bm Y_1)
      -\nabla_{\bm B}\mathcal J_{\mathcal I,0}(\bm B;\bm Y_2)
    \right\|_{\mathrm F}
    &\leq
    2L_{\mathcal I}
    \|\bm Y_1-\bm Y_2\|_F.
\end{aligned}
\label{eq:nlarx_finite_window_sensitivity_bounds}
\end{equation}
Here, $D_{\bm B}^2$ and $\nabla_{\bm B}$ denote the second directional
derivative and the gradient with respect to the NLARX parameter matrix.
The constant $\rho_{\mathcal I}$ bounds the possible negative curvature of
the reconstruction-error term, and $L_{\mathcal I}$ bounds the change of its
parameter gradient under perturbations of the calibration data. The
regularization parameter already used in
\eqref{eq:nlarx_inverse_calibration_functional} satisfies
\begin{equation}
    \lambda_{\mathrm{reg}}>\rho_{\mathcal I}.
    \label{eq:nlarx_curvature_domination}
\end{equation}
\end{assumption}

Assumption~\ref{ass:nlarx_finite_horizon_identifiability} involves the same
measurement-space objective and regularization parameter already used to
define the inverse-calibration score. Its two additional quantities are
analytical bounds on finite-window curvature and data sensitivity.

\begin{theorem}[Existence, uniqueness and inverse stability of physical-space inverse calibration solution on an admissible finite window]
\label{thm:nlarx_existence_uniqueness_inverse_stability}
Under Assumption~\ref{ass:nlarx_admissibility_continuity}, for every
$\lambda_{\mathrm{reg}}>0$ and every admissible finite calibration window
$\mathcal T_H\subseteq\mathcal T_{\mathrm{fin}}$,
problem~\eqref{eq:nlarx_inverse_calibrated_model} possesses at least one
solution.

Under Assumptions~\ref{ass:nlarx_admissibility_continuity} and
\ref{ass:nlarx_finite_horizon_identifiability}, the regularized
inverse-calibration functional in
\eqref{eq:nlarx_inverse_calibration_functional} admits exactly one minimizer
$\bm B^\star(\bm Y)$ on $\mathbb K_{\bm\alpha}$ for every admissible
perturbation $\bm Y$ of the calibration matrix. For any two such matrices
$\bm Y_{\mathcal I}^{(1)}$ and $\bm Y_{\mathcal I}^{(2)}$, the corresponding
minimizers satisfy
\begin{equation}
\begin{aligned}
    \left\|
      \bm B^\star(\bm Y_{\mathcal I}^{(1)})
      -\bm B^\star(\bm Y_{\mathcal I}^{(2)})
    \right\|_{\mathrm F}
    \leq{}&
    \frac{L_{\mathcal I}}
         {\lambda_{\mathrm{reg}}-\rho_{\mathcal I}}
    \left\|
      \bm Y_{\mathcal I}^{(1)}
      -\bm Y_{\mathcal I}^{(2)}
    \right\|_F.
\end{aligned}
\label{eq:nlarx_inverse_stability_bound}
\end{equation}
Consequently, the regularized reconstruction model is
uniquely defined and Lipschitz stable with respect to noisy finite-window
calibration data.
\end{theorem}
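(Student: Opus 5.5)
Existence will follow from the direct method of the calculus of variations. By Proposition~\ref{prop:nlarx_continuity_coercivity}, $\mathcal J_{\mathcal I,\lambda_{\mathrm{reg}}}$ is continuous on the closed set $\mathbb K_{\bm\alpha}$ and coercive. Fix any $\bm B_0\in\mathbb K_{\bm\alpha}$. The sublevel set $\{\bm B\in\mathbb K_{\bm\alpha}:\mathcal J_{\mathcal I,\lambda_{\mathrm{reg}}}(\bm B)\leq\mathcal J_{\mathcal I,\lambda_{\mathrm{reg}}}(\bm B_0)\}$ is closed, and it is bounded because $\lambda_{\mathrm{reg}}\|\bm B\|_{\mathrm F}^2\leq\mathcal J_{\mathcal I,\lambda_{\mathrm{reg}}}(\bm B)$. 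Since it lies in the finite-dimensional space $\mathbb F^{r^\star\times p_{\bm\alpha}}$, it is compact, so Weierstrass gives a minimizer. Admissibility of the window ($K_{\mathcal I}\geq2$) is only needed so that the simulator and reconstruction operator are well defined.

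For uniqueness, fix $\bm Y$ in the bounded neighborhood of Assumption~\ref{ass:nlarx_finite_horizon_identifiability}. The Hessian of the regularizer is $2\lambda_{\mathrm{reg}}\|\bm\Delta\|_{\mathrm F}^2$, so the curvature bound yields
\[
D_{\bm B}^2\mathcal J_{\mathcal I,\lambda_{\mathrm{reg}}}(\bm B;\bm Y)[\bm\Delta,\bm\Delta]\geq 2(\lambda_{\mathrm{reg}}-\rho_{\mathcal I})\|\bm\Delta\|_{\mathrm F}^2 .
\]
Since $\mathbb K_{\bm\alpha}$ is convex and $\mathscr R_{\mathcal I,\bm\alpha}$ is $C^2$ on a neighborhood of it, integrating along segments shows that $\mathcal J(\cdot;\bm Y)$ is $(\lambda_{\mathrm{reg}}-\rho_{\mathcal I})$-strongly convex on $\mathbb K_{\bm\alpha}$ in the normalization $f(\bm B')\geq f(\bm B)+\langle\nabla f(\bm B),\bm B'-\bm B\rangle+(\lambda_{\mathrm{reg}}-\rho_{\mathcal I})\|\bm B'-\bm B\|_{\mathrm F}^2$. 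Equivalently, the gradient is strongly monotone:
\[
\operatorname{Re}\langle\nabla f(\bm B_1)-\nabla f(\bm B_2),\bm B_1-\bm B_2\rangle_F\geq2(\lambda_{\mathrm{reg}}-\rho_{\mathcal I})\|\bm B_1-\bm B_2\|_{\mathrm F}^2 .
\]
Existence follows from compactness, and uniqueness from strict convexity. In the complex case I would treat $\mathbb F^{r^\star\times p_{\bm\alpha}}$ as a real vector space with inner product $\operatorname{Re}\langle\cdot,\cdot\rangle_F$, so that gradients and Hessians are real-linear objects.

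For stability, let $\bm B_i=\bm B^\star(\bm Y_{\mathcal I}^{(i)})$ and $f_i=\mathcal J_{\mathcal I,\lambda_{\mathrm{reg}}}(\cdot;\bm Y_{\mathcal I}^{(i)})$. Since the $\bm B_i$ minimize over a convex set, they satisfy the variational inequalities $\operatorname{Re}\langle\nabla f_i(\bm B_i),\bm B-\bm B_i\rangle\geq0$ for all $\bm B\in\mathbb K_{\bm\alpha}$. Test the first with $\bm B_2$ and the second with $\bm B_1$, add, and insert $\pm\nabla f_1(\bm B_2)$. Strong monotonicity of $\nabla f_1$ then gives
\[
2(\lambda_{\mathrm{reg}}-\rho_{\mathcal I})\|\bm B_1-\bm B_2\|^2\leq\operatorname{Re}\langle\nabla f_2(\bm B_2)-\nabla f_1(\bm B_2),\bm B_1-\bm B_2\rangle .
\]
The regularizer gradients cancel in $\nabla f_2-\nabla f_1$, so only the misfit gradients remain. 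The sensitivity bound controls this difference by $2L_{\mathcal I}\|\bm Y^{(1)}_{\mathcal I}-\bm Y^{(2)}_{\mathcal I}\|_F$. Applying Cauchy--Schwarz and dividing by $\|\bm B_1-\bm B_2\|$ yields \eqref{eq:nlarx_inverse_stability_bound}; the factors of $2$ in the assumption are placed precisely so that they cancel.

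The main obstacle is the passage from the pointwise Hessian bound to global strong convexity and monotonicity. This needs convexity of $\mathbb K_{\bm\alpha}$, so that segments stay in the domain where $C^2$ holds, and it needs the bound to hold for every $\bm Y$ in the neighborhood, so that the same constant serves both data sets. I would state this step via the integral form $\nabla f(\bm B_1)-\nabla f(\bm B_2)=\int_0^1 D^2f(\bm B_2+t(\bm B_1-\bm B_2))[\bm B_1-\bm B_2,\cdot]\,dt$. I would also check that using first-order optimality conditions in variational-inequality form, rather than vanishing gradients, correctly handles minimizers on the boundary of $\mathbb K_{\bm\alpha}$.
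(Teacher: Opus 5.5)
Your proposal is correct and follows essentially the same route as the paper. Existence is by the direct method: you use a compact sublevel set, while the paper uses a bounded minimizing sequence together with closedness of $\mathbb K_{\bm\alpha}$. Uniqueness comes from the lower bound $2(\lambda_{\mathrm{reg}}-\rho_{\mathcal I})\|\bm\Delta\|_{\mathrm F}^2$ on the second derivative. Lipschitz stability comes from the two variational inequalities, strong monotonicity, and the gradient sensitivity bound. Your explicit passage from the pointwise Hessian bound to strong monotonicity along segments of the convex set $\mathbb K_{\bm\alpha}$, and your treatment of the complex case via $\operatorname{Re}\langle\cdot,\cdot\rangle_F$, spell out steps the paper leaves implicit.
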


\begin{proof}
Let $j_\star$ be the infimum of
$\mathcal J_{\mathcal I,\lambda_{\mathrm{reg}}}$ on
$\mathbb K_{\bm{\alpha}}$ and choose a minimizing sequence
$\{\bm B_\nu\}_{\nu\geq1}$. Coercivity from
Proposition~\ref{prop:nlarx_continuity_coercivity} implies that this sequence
is bounded. Since the parameter space is finite dimensional, it has a
convergent subsequence, still denoted by $\bm B_\nu$, with
$\bm B_\nu\rightarrow\bm B^\star$. Closedness of
$\mathbb K_{\bm{\alpha}}$ gives
$\bm B^\star\in\mathbb K_{\bm{\alpha}}$. By continuity,
\[
    \mathcal J_{\mathcal I,\lambda_{\mathrm{reg}}}(\bm B^\star)
    =
    \lim_{\nu\rightarrow\infty}
    \mathcal J_{\mathcal I,\lambda_{\mathrm{reg}}}(\bm B_\nu)
    =
    j_\star.
\]
Thus $\bm B^\star$ is a minimizer. The proof depends on the finite
coefficient-space recursion followed by the complete inverse reconstruction
into $\mathscr Y_{\mathcal I}$.

For a calibration matrix $\bm Y$, the functional in
\eqref{eq:nlarx_inverse_calibration_functional} has the decomposition
\[
    \mathcal J_{\mathcal I,\lambda_{\mathrm{reg}}}(\bm B;\bm Y)
    =
    \mathcal J_{\mathcal I,0}(\bm B;\bm Y)
    +\lambda_{\mathrm{reg}}\|\bm B\|_{\mathrm F}^2.
\]
Differentiating twice with respect to $\bm B$ and applying
\eqref{eq:nlarx_finite_window_sensitivity_bounds} yields
\begin{equation}
\begin{aligned}
    D_{\bm B}^2
    \mathcal J_{\mathcal I,\lambda_{\mathrm{reg}}}
    (\bm B;\bm Y)[\bm\Delta,\bm\Delta]
    ={}&
    D_{\bm B}^2\mathcal J_{\mathcal I,0}
    (\bm B;\bm Y)[\bm\Delta,\bm\Delta]
    +2\lambda_{\mathrm{reg}}\|\bm\Delta\|_{\mathrm F}^2
    \\
    \geq{}&
    2(\lambda_{\mathrm{reg}}-\rho_{\mathcal I})
    \|\bm\Delta\|_{\mathrm F}^2.
\end{aligned}
\label{eq:nlarx_inverse_functional_strong_convexity}
\end{equation}
Condition~\eqref{eq:nlarx_curvature_domination} therefore gives strong
convexity, and strong convexity supplies uniqueness.

The regularization term is independent of the calibration data. Consequently,
the second bound in \eqref{eq:nlarx_finite_window_sensitivity_bounds} gives
\begin{equation}
\begin{aligned}
    \left\|
      \nabla_{\bm B}
      \mathcal J_{\mathcal I,\lambda_{\mathrm{reg}}}
      (\bm B;\bm Y_1)
      -
      \nabla_{\bm B}
      \mathcal J_{\mathcal I,\lambda_{\mathrm{reg}}}
      (\bm B;\bm Y_2)
    \right\|_{\mathrm F}
    \leq{}&
    2L_{\mathcal I}
    \|\bm Y_1-\bm Y_2\|_F.
\end{aligned}
\label{eq:nlarx_inverse_gradient_perturbation}
\end{equation}
Let $\bm B_i=\bm B^\star(\bm Y_i)$ and
$\bm\Delta=\bm B_1-\bm B_2$. The first-order variational inequalities for
the two constrained minimizers and the strong monotonicity implied by
\eqref{eq:nlarx_inverse_functional_strong_convexity} give
\begin{equation}
\begin{aligned}
    2(\lambda_{\mathrm{reg}}-\rho_{\mathcal I})
    \|\bm\Delta\|_{\mathrm F}^2
    \leq{}&
    \operatorname{Re}
    \left\langle
      \nabla_{\bm B}
      \mathcal J_{\mathcal I,\lambda_{\mathrm{reg}}}
      (\bm B_2;\bm Y_2)
      -
      \nabla_{\bm B}
      \mathcal J_{\mathcal I,\lambda_{\mathrm{reg}}}
      (\bm B_2;\bm Y_1),
      \bm\Delta
    \right\rangle_{\mathrm F}
    \\
    \leq{}&
    2L_{\mathcal I}
    \|\bm Y_1-\bm Y_2\|_F
    \|\bm\Delta\|_{\mathrm F}.
\end{aligned}
\label{eq:nlarx_inverse_variational_stability}
\end{equation}
Division by
$2(\lambda_{\mathrm{reg}}-\rho_{\mathcal I})\|\bm\Delta\|_{\mathrm F}$
gives \eqref{eq:nlarx_inverse_stability_bound}.
\end{proof}

\begin{remark}
Positive regularization supplies coercivity and existence on the finite
calibration window. When $\lambda_{\mathrm{reg}}>\rho_{\mathcal I}$, it also
dominates the possible negative curvature of the existing reconstruction
error, producing strong convexity, uniqueness, and Lipschitz dependence on
noisy calibration data. 
\end{remark}

The preceding results establish continuity of the coefficient simulator, existence, uniqueness and stability of the
physical-space inverse reconstruction under the stated
assumptions. The next subsection
specializes this two-space construction to the two implemented procedures:
Tikhonov and Pareto-based inverse-calibrated recovery.

\subsubsection{Tikhonov and Pareto-Based Recovery Procedures}

Two distinct algorithms for Phase~II are presented in this section. For each admissible calibration window, the parameter matrix
is first identified from the Phase~I coefficients in
$\mathscr C_{\mathcal I}$ and the resulting NLARX model is simulated freely in
Hankel coefficient space. The candidate is then mapped through Hankel space
$\mathscr H_{\mathcal I}$ into physical space $\mathscr Y_{\mathcal I}$, where its selection
criteria are evaluated against the measurements on calibration window $\mathcal T_H$. The first
procedure selects a model by a scalar Tikhonov inverse-calibration score,
whereas the second uses physical-space Pareto objectives followed by a
deterministic multi-criteria decision rule.

Theorem \ref{thm:nlarx_existence_uniqueness_inverse_stability} assures
 the existence, uniqueness and inverse-stability of the
 coefficient-space identified model and of the complete
inverse-calibrated data recovery performed by each of the two algorithms.

For a feasible structure $\bm{\alpha}$, let
$M_{\bm{\alpha}}=K_{\mathcal I}-\ell_{\bm{\alpha}}>0$ and define
\begin{equation}
\begin{aligned}
    \bm\Gamma_{\mathcal I,\bm{\alpha}}
    &=
    \begin{bmatrix}
      \bm\gamma_{\bm{\alpha},\ell_{\bm{\alpha}}}&
      \cdots&
      \bm\gamma_{\bm{\alpha},K_{\mathcal I}-1}
    \end{bmatrix}
    \in\mathbb F^{p_{\bm{\alpha}}\times M_{\bm{\alpha}}},
    \\
    \bm C^+_{\mathcal I,\bm{\alpha}}
    &=
    \begin{bmatrix}
      \bm c_{\mathcal I,\ell_{\bm{\alpha}}+1}&
      \cdots&
      \bm c_{\mathcal I,K_{\mathcal I}}
    \end{bmatrix}
    \in\mathbb F^{r^\star\times M_{\bm{\alpha}}}.
\end{aligned}
\label{eq:nlarx_teacher_forced_matrices}
\end{equation}

\paragraph{Procedure 1 -- Tikhonov inverse-calibrated recovery}
For $\lambda_{\mathrm{reg}}>0$, the fixed-structure coefficient-space
identification functional is
\begin{equation}
    \mathcal Q_{\mathcal I,\bm{\alpha}}(\bm B)
    =
    \left\|
      \bm C^+_{\mathcal I,\bm{\alpha}}
      -
      \bm B\bm\Gamma_{\mathcal I,\bm{\alpha}}
    \right\|_{\mathrm F}^2
    +
    \lambda_{\mathrm{reg}}\|\bm B\|_{\mathrm F}^2.
    \label{eq:nlarx_tikhonov_functional}
\end{equation}

Let $\mathfrak A_{\mathcal I}$ be a finite, nonempty family of feasible order
triples, so that $M_{\bm\alpha}>0$ for every
$\bm\alpha\in\mathfrak A_{\mathcal I}$. For each structure, minimization of
\eqref{eq:nlarx_tikhonov_functional} identifies one parameter matrix from
coefficient data only. The resulting candidate is then simulated freely
through \eqref{eq:nlarx_recursive_simulator}, reconstructed in the physical
experimental variables through
\eqref{eq:nlarx_complete_reconstruction_operator}, and only then assigned the
inverse-calibration score
\begin{equation}
    S_{\mathrm{Tik}}(\bm\alpha)
    =
    \mathcal J_{\mathcal I,\lambda_{\mathrm{reg}}}
    (\bm B_{\lambda,\bm\alpha}).
\label{eq:nlarx_tikhonov_structure_score}
\end{equation}
Thus, $\mathcal Q_{\mathcal I,\bm\alpha}$ identifies the candidate in
$\mathscr C_{\mathcal I}$, whereas $S_{\mathrm{Tik}}$ selects among the
candidates after their reconstruction in $\mathscr Y_{\mathcal I}$. These two
roles are mathematically distinct.

Let $\lambda_{\mathrm{reg}}>0$, let
$\mathfrak A_{\mathcal I}$ be a finite, nonempty set of admissible
NLARX structures, and assume that each
$\bm\alpha=(n_a,n_b,n_k)\in\mathfrak A_{\mathcal I}$ uniquely specifies
one candidate structure. Then, for every
$\bm\alpha\in\mathfrak A_{\mathcal I}$, the Tikhonov functional
\eqref{eq:nlarx_tikhonov_functional} admits a unique global minimizer,
given by
\begin{equation}
    \bm B_{\lambda_{\mathrm{reg}},\bm\alpha}
    =
    \bm C^+_{\mathcal I,\bm\alpha}
    \bm\Gamma_{\mathcal I,\bm\alpha}^{\mathrm H}
    \left(
        \bm\Gamma_{\mathcal I,\bm\alpha}
        \bm\Gamma_{\mathcal I,\bm\alpha}^{\mathrm H}
        +
        \lambda_{\mathrm{reg}}\bm I_{p_{\bm\alpha}}
    \right)^{-1}.
\label{eq:nlarx_tikhonov_solution}
\end{equation}
This fixed-structure minimizer is determined entirely in Hankel coefficient space
$\mathscr C_{\mathcal I}$. Its free-run trajectory is then reconstructed in physical space
$\mathscr Y_{\mathcal I}$, where
$S_{\mathrm{Tik}}(\bm\alpha)$ is evaluated.

We define the set of structures attaining the minimum Tikhonov selection
score by
\begin{equation}
    \mathfrak A_{\mathrm{Tik}}^{(0)}
    =
    \operatorname*{arg\,min}_{\bm\alpha\in\mathfrak A_{\mathcal I}}
    S_{\mathrm{Tik}}(\bm\alpha).
    \label{eq:nlarx_tikhonov_score_minimizers}
\end{equation}
If several structures attain the same minimum score, a unique structure
is determined by the following hierarchical minimum-complexity
criteria:
\begin{align}
    \mathfrak A_{\mathrm{Tik}}^{(1)}
    &=
    \operatorname*{arg\,min}_{
        \bm\alpha\in\mathfrak A_{\mathrm{Tik}}^{(0)}}
    n_a,
    \nonumber\\
    \mathfrak A_{\mathrm{Tik}}^{(2)}
    &=
    \operatorname*{arg\,min}_{
        \bm\alpha\in\mathfrak A_{\mathrm{Tik}}^{(1)}}
    n_b,
    \nonumber\\
    \left\{\bm\alpha_{\mathrm{Tik}}^\star\right\}
    &=
    \operatorname*{arg\,min}_{
        \bm\alpha\in\mathfrak A_{\mathrm{Tik}}^{(2)}}
    n_k.
    \label{eq:nlarx_tikhonov_structure_selection}
\end{align}
Consequently, the recovered parameter matrix
\begin{equation}
    \bm B_{\mathrm{Tik}}
    =
    \bm B_{\lambda_{\mathrm{reg}},
           \bm\alpha_{\mathrm{Tik}}^\star}
    \label{eq:nlarx_unique_tikhonov_parameter}
\end{equation}
and the associated reconstructed output
\begin{equation}
    \widehat{\bm Y}_{\mathcal I,\mathrm{Tik}}
    =
    \mathscr R_{\mathcal I,\bm\alpha_{\mathrm{Tik}}^\star}
    \left(\bm B_{\mathrm{Tik}}\right)
    \label{eq:nlarx_unique_tikhonov_output}
\end{equation}
are uniquely determined.

Thus, Tikhonov inverse-calibrated recovery problem
is well-posed, and the hierarchical post-reconstruction physical-space
selection procedure produces one uniquely defined inverse-calibrated NLARX
model.

\paragraph{Procedure 2 -- Pareto-based inverse-calibrated recovery}
The proposed Pareto-based procedure constructs the coefficient-space 
matrices for every $\bm\alpha\in\mathfrak A_{\mathcal I}$, computes its unique
parameter candidate from \eqref{eq:nlarx_tikhonov_solution}, simulates that
candidate in Hankel coefficient space $\mathscr C_{\mathcal I}$, and completes the inverse
reconstruction in physical space $\mathscr Y_{\mathcal I}$ before evaluating its Pareto
objectives.

 Let
\begin{equation}
    \widehat{\bm Y}_{\mathcal I,\bm\alpha}
    =
    \mathscr R_{\mathcal I,\bm\alpha}
    (\bm B_{\lambda,\bm\alpha}),
    \qquad
    \bm E_{\bm\alpha}
    =
    \bm Y_{\mathcal I}
    -
    \widehat{\bm Y}_{\mathcal I,\bm\alpha}.
    \label{eq:nlarx_candidate_reconstruction_residual}
\end{equation}
For any two vectors $\bm a,\bm b\in\mathbb R^n$, define the Pearson
correlation coefficient by
\begin{equation}
\begin{aligned}
    \varrho_n(\bm a,\bm b)
    &=
    \begin{cases}
    \displaystyle
    \frac{\langle\bm a^{c},\bm b^{c}\rangle}
         {\|\bm a^{c}\|_2\,\|\bm b^{c}\|_2},
    &\|\bm a^{c}\|_2\,\|\bm b^{c}\|_2>0,\\[2mm]
    0,
    &\|\bm a^{c}\|_2\,\|\bm b^{c}\|_2=0,
    \end{cases}
    \\
    \bm a^{c}
    &=\bm a-\bar a\bm 1_n,
    \qquad
    \bm b^{c}=\bm b-\bar b\bm 1_n,
    \\
    \bar a
    &=\frac{1}{n}\bm 1_n^{\mathsf T}\bm a,
    \qquad
    \bar b=\frac{1}{n}\bm 1_n^{\mathsf T}\bm b.
\end{aligned}
\label{eq:finite_pearson_correlation}
\end{equation}
The four minimized objectives are
\begin{equation}
\begin{aligned}
    f_1(\bm\alpha)
    &=
    \frac{
      \|\bm E_{\bm\alpha}\|_F
    }{
      \|\bm Y_{\mathcal I}
       -\bar{\bm y}\bm{1}_{N_{\mathcal I}}^{\mathsf T}\|_F
      +\epsilon_{\mathrm{mach}}
    },
    \\
    f_2(\bm\alpha)
    &=
    1-\frac{1}{m}
    \sum_{i=1}^{m}
    \varrho_{N_{\mathcal I}}\!\left(
       \bm Y_{\mathcal I}(i,:)^{\mathsf T},
       \widehat{\bm Y}_{\mathcal I,\bm\alpha}(i,:)^{\mathsf T}
    \right),
    \\
    f_3(\bm\alpha)
    &=
    \frac{1}{m}
    \sum_{i=1}^{m}
    \left|
    \varrho_{N_{\mathcal I}-1}\!\left(
       \bm E_{\bm\alpha}(i,1:N_{\mathcal I}-1)^{\mathsf T},
       \bm E_{\bm\alpha}(i,2:N_{\mathcal I})^{\mathsf T}
    \right)
    \right|,
    \\
    f_4(\bm\alpha)
    &=
    \frac{\|\bm B_{\lambda,\bm\alpha}\|_{\mathrm F}}
         {1+\|\bm B_{\lambda,\bm\alpha}\|_{\mathrm F}},
    \qquad
    \bm f_{\mathcal I}(\bm\alpha)
    =
    \begin{bmatrix}f_1&f_2&f_3&f_4\end{bmatrix}^{\mathsf T},
\end{aligned}
\label{eq:nlarx_pareto_objectives}
\end{equation}
representing relative reconstruction error, model reconstruction correlation
loss, residual autocorrelation, and parameter penalty, respectively. The first
three objectives are defined from the reconstructed physical channels and
their physical residuals. The fourth is the regularization component attached
to that same reconstructed candidate. Consequently, the objective vector and
the Pareto decision score are formed only after the inverse reconstruction has
been completed.

\begin{definition}[Pareto-optimal inverse-calibrated NLARX structure]
\label{def:nlarx_pareto_optimality}
For $\bm\alpha,\bm\beta\in\mathfrak A_{\mathcal I}$,
$\bm\beta$ dominates $\bm\alpha$, written
$\bm\beta\prec\bm\alpha$, if
\begin{equation}
    f_j(\bm\beta)\leq f_j(\bm\alpha)
    \quad(j=1,\ldots,4),
    \qquad
    f_j(\bm\beta)<f_j(\bm\alpha)
    \quad\text{for at least one }j.
    \label{eq:nlarx_pareto_dominance}
\end{equation}
The Pareto set is
\begin{equation}
    \mathfrak P_{\mathcal I}
    =
    \left\{
       \bm\alpha\in\mathfrak A_{\mathcal I}:
       \nexists\bm\beta\in\mathfrak A_{\mathcal I}
       \text{ with }\bm\beta\prec\bm\alpha
    \right\}.
    \label{eq:nlarx_pareto_set}
\end{equation}
\end{definition}

Let $\widehat f_j$ be objective $f_j$ normalized over
$\mathfrak P_{\mathcal I}$, with a zero normalized value when the objective
range is zero. For prescribed weights $w_j>0$, $\sum_{j=1}^{4}w_j=1$, define
\begin{equation}
    \Psi_{\mathcal I}(\bm\alpha)
    =
    \sum_{j=1}^{4}w_j\widehat f_j(\bm\alpha).
    \label{eq:nlarx_pareto_decision_score}
\end{equation}
To define a unique model whenever several Pareto-optimal structures have the
same decision score, we introduce the ordered criteria
\begin{equation*}
\begin{aligned}
    q_1&=\Psi_{\mathcal I},
    &q_2&=f_1,
    &q_3&=f_2,
    \\
    q_4&=f_3,
    &q_5&=f_4,
    &q_6&=n_a,
    \\
    q_7&=n_b,
    &q_8&=n_k.
\end{aligned}
\end{equation*}
The hierarchical Pareto decision rule is
\begin{equation}
\begin{aligned}
    \mathfrak P_{\mathcal I}^{(0)}
    &=\mathfrak P_{\mathcal I},
    \\
    \mathfrak P_{\mathcal I}^{(s)}
    &=
    \operatorname*{arg\,min}_{
       \bm\alpha\in\mathfrak P_{\mathcal I}^{(s-1)}}
       q_s(\bm\alpha),
       \qquad s=1,\ldots,8,
    \\
    \{\bm\alpha_{\mathrm{Pareto}}^\star\}
    &=\mathfrak P_{\mathcal I}^{(8)}.
    \label{eq:nlarx_pareto_structure_selection}
\end{aligned}
\end{equation}

Rule \eqref{eq:nlarx_pareto_structure_selection} selects a unique structure $\bm\alpha_{\mathrm{Pareto}}^\star$ and hence a unique Pareto-based inverse-calibrated model
\begin{equation}
    \bm B_{\mathrm{Pareto}}
    =
    \bm B_{\lambda,\bm\alpha_{\mathrm{Pareto}}^\star},
    \qquad
    \widehat{\bm Y}_{\mathcal I,\mathrm{Pareto}}
    =
    \mathscr R_{
    \mathcal I,\bm\alpha_{\mathrm{Pareto}}^\star}
    (\bm B_{\mathrm{Pareto}}).
\label{eq:nlarx_unique_pareto_output}
\end{equation}

\begin{remark}[Distinct procedures and uniqueness]\label{Distinct_procedures}
Theorem~\ref{thm:nlarx_existence_uniqueness_inverse_stability} guarantees the existence, uniqueness, and inverse stability of both the identified coefficient-space model and the complete inverse-calibrated data reconstruction produced by each algorithm. For noisy data, the two procedures may select distinct, individually unique models because their decision criteria reflect different selection preferences. Consequently, the theory presented here does not impose the outputs of the two algorithms to coincide.
\end{remark}

\subsubsection{Algorithmic Formulation and Computational Complexity}

Having established the mathematical properties of the inverse-calibrated
NLARX model and the uniqueness of the two recovery procedures independently, we now present
their  algorithmic formulations and quantify the associated
arithmetic and memory requirements.

Algorithm \ref{alg:nlarx_tikhonov_inverse_calibration} presents the procedure for Tikhonov inverse-calibrated NLARX model. 
Algorithm \ref{alg:nlarx_pareto_inverse_calibration} presents the procedure for Pareto-based inverse-calibrated NLARX model.

\begin{algorithm}[!htbp]
\caption{Tikhonov inverse-calibrated NLARX model}
\label{alg:nlarx_tikhonov_inverse_calibration}
\begin{algorithmic}[1]
\Require $\bm Y$, $\bar{\bm y}$, $\bm\Phi_{r^\star}$,
         $\bm C_{r^\star}$, $q$, admissible calibration window
         $(\mathcal T_H,\mathcal I_H)$ with
         $\mathcal T_H\subseteq\mathcal T_{\mathrm{fin}}$, finite feasible family
         $\mathfrak A_{\mathcal I}$, $\lambda_{\mathrm{reg}}>0$.
\Ensure Unique selected structure $\bm\alpha^\star_{\mathrm{Tik}}$,
        parameter matrix $\bm B_{\mathrm{Tik}}$, simulated coefficients, and
        reconstructed channels $\widehat{\bm Y}_{\mathcal I,\mathrm{Tik}}$.
\State Extract $\bm Y_{\mathcal I}$ and $\bm C_{\mathcal I}$ according to
       \eqref{eq:nlarx_local_data_blocks}.
\ForAll{$\bm\alpha\in\mathfrak A_{\mathcal I}$}
    \State In $\mathscr C_{\mathcal I}$, construct
           $\bm\Gamma_{\mathcal I,\bm\alpha}$ and
           $\bm C^+_{\mathcal I,\bm\alpha}$ from
           \eqref{eq:nlarx_teacher_forced_matrices}.
    \State Compute the unique ridge matrix
           $\bm B_{\lambda,\bm\alpha}$ from
           \eqref{eq:nlarx_tikhonov_solution}.
    \State Simulate the coefficient vectors recursively in
           $\mathscr C_{\mathcal I}$ using
           \eqref{eq:nlarx_recursive_simulator}.
    \State Map the simulated coefficients into $\mathscr H_{\mathcal I}$ with
           $\bm\Phi_{r^\star}$ and recover the channels in
           $\mathscr Y_{\mathcal I}$ with
           \eqref{eq:nlarx_complete_reconstruction_operator}.
    \State After physical reconstruction, evaluate
           $S_{\mathrm{Tik}}(\bm\alpha)$ from
           \eqref{eq:nlarx_tikhonov_structure_score}.
\EndFor
\State Select the unique structure using the hierarchical optimization
       criterion in
       \eqref{eq:nlarx_tikhonov_structure_selection}.
\State \Return $\bm\alpha^\star_{\mathrm{Tik}}$,
       $\bm B_{\mathrm{Tik}}$,
       $\widetilde{\bm C}_{\mathcal I,\mathrm{Tik}}$, and
       $\widehat{\bm Y}_{\mathcal I,\mathrm{Tik}}$.
\end{algorithmic}
\end{algorithm}

\begin{algorithm}[!htbp]
\caption{Pareto-based inverse-calibrated NLARX model}
\label{alg:nlarx_pareto_inverse_calibration}
\begin{algorithmic}[1]
\Require $\bm Y$, $\bar{\bm y}$, $\bm\Phi_{r^\star}$,
         $\bm C_{r^\star}$, $q$, admissible calibration window
         $(\mathcal T_H,\mathcal I_H)$ with
         $\mathcal T_H\subseteq\mathcal T_{\mathrm{fin}}$, finite feasible family
         $\mathfrak A_{\mathcal I}$, $\lambda_{\mathrm{reg}}>0$, and positive
         decision weights $\{w_j\}_{j=1}^{4}$ with
         $\sum_{j=1}^{4}w_j=1$.
\Ensure Unique Pareto-selected structure
        $\bm\alpha^\star_{\mathrm{Pareto}}$, parameter matrix
        $\bm B_{\mathrm{Pareto}}$, simulated coefficients, and reconstructed
        channels $\widehat{\bm Y}_{\mathcal I,\mathrm{Pareto}}$.
\State Extract $\bm Y_{\mathcal I}$ and $\bm C_{\mathcal I}$ according to
       \eqref{eq:nlarx_local_data_blocks}.
\ForAll{$\bm\alpha\in\mathfrak A_{\mathcal I}$}
    \State In $\mathscr C_{\mathcal I}$, construct
           $\bm\Gamma_{\mathcal I,\bm\alpha}$ and
           $\bm C^+_{\mathcal I,\bm\alpha}$ from
           \eqref{eq:nlarx_teacher_forced_matrices}.
    \State Compute, within the present procedure, the unique matrix
           $\bm B_{\lambda,\bm\alpha}$ from
           \eqref{eq:nlarx_tikhonov_solution}.
    \State Simulate the coefficient vectors recursively in
           $\mathscr C_{\mathcal I}$ using
           \eqref{eq:nlarx_recursive_simulator}.
    \State Map through $\mathscr H_{\mathcal I}$ and reconstruct
           $\widehat{\bm Y}_{\mathcal I,\bm\alpha}$ in
           $\mathscr Y_{\mathcal I}$ from
           \eqref{eq:nlarx_complete_reconstruction_operator} and form
           $\bm E_{\bm\alpha}$ from
           \eqref{eq:nlarx_candidate_reconstruction_residual}.
    \State After physical reconstruction, evaluate the four objectives in
           \eqref{eq:nlarx_pareto_objectives}.
\EndFor
\State Determine the nondominated set
       $\mathfrak P_{\mathcal I}$ from
       \eqref{eq:nlarx_pareto_set}.
\State Normalize the objectives on $\mathfrak P_{\mathcal I}$ and compute
       $\Psi_{\mathcal I}$ from
       \eqref{eq:nlarx_pareto_decision_score}.
\State Select the unique structure using the hierarchical optimization
       criterion in
       \eqref{eq:nlarx_pareto_structure_selection}.
\State \Return $\bm\alpha^\star_{\mathrm{Pareto}}$,
       $\bm B_{\mathrm{Pareto}}$,
       $\widetilde{\bm C}_{\mathcal I,\mathrm{Pareto}}$, and
       $\widehat{\bm Y}_{\mathcal I,\mathrm{Pareto}}$.
\end{algorithmic}
\end{algorithm}

For the complexity analysis, let
\begin{equation}
    A_{\mathcal I}
    =
    |\mathfrak A_{\mathcal I}|,
    \qquad
    p_{\max}
    =
    \max_{\bm\alpha\in\mathfrak A_{\mathcal I}}
    p_{\bm\alpha}.
    \label{eq:nlarx_complexity_parameters}
\end{equation}
Assume that each scalar component of the nonlinear feature map can be
evaluated in constant time. Consequently, evaluating all
$p_{\bm\alpha}$ features of one regressor requires
$\mathcal O(p_{\bm\alpha})$ operations.

For a fixed admissible structure
$\bm\alpha\in\mathfrak A_{\mathcal I}$, constructing the 
matrices in \eqref{eq:nlarx_teacher_forced_matrices} requires
\begin{equation}
    \mathcal O\!\left(
        M_{\bm\alpha}p_{\bm\alpha}
    \right)
\end{equation}
operations. Forming the Gram matrix and the coefficient--regressor
cross-product, computing the Cholesky factorization of the regularized
Gram matrix, and solving for all $r^\star$ output rows require
\begin{equation}
\begin{aligned}
    \mathcal O\!\bigl(
       &M_{\bm\alpha}p_{\bm\alpha}^{2}
       +r^\star M_{\bm\alpha}p_{\bm\alpha}
       +p_{\bm\alpha}^{3}
       +r^\star p_{\bm\alpha}^{2}
    \bigr).
\end{aligned}
\label{eq:nlarx_tikhonov_complexity}
\end{equation}
The positive regularization parameter guarantees that the Gram matrix is
Hermitian positive definite, so that the Cholesky factorization is
well-defined.

Free-run simulation requires
$M_{\bm\alpha}=K_{\mathcal I}-\ell_{\bm\alpha}$ recursive transitions.
Including nonlinear regressor formation and multiplication by the
parameter matrix, its cost is
\begin{equation}
    \mathcal O\!\left(
        M_{\bm\alpha}r^\star p_{\bm\alpha}
    \right).
    \label{eq:nlarx_recursive_simulation_complexity}
\end{equation}
Multiplication of the simulated coefficient matrix by the selected modes
and uniform anti-diagonal recovery followed by inverse reshaping require,
respectively,
\begin{equation}
    \mathcal O\!\left(
        q\,r^\star K_{\mathcal I}
    \right)
    \qquad\text{and}\qquad
    \mathcal O\!\left(
        q\,K_{\mathcal I}+mN_{\mathcal I}
    \right)
    \label{eq:nlarx_reconstruction_complexity}
\end{equation}
operations.

Collecting the leading structure-dependent identification, simulation,
and reconstruction costs gives
\begin{equation}
\begin{aligned}
    \mathcal C_{\bm\alpha}
    ={}&
    M_{\bm\alpha}p_{\bm\alpha}^{2}
    +r^\star M_{\bm\alpha}p_{\bm\alpha}
    +p_{\bm\alpha}^{3}
    +r^\star p_{\bm\alpha}^{2}
    \\
    &+
    q\,r^\star K_{\mathcal I}
    +q\,K_{\mathcal I}
    +mN_{\mathcal I}.
    \label{eq:nlarx_per_candidate_complexity}
\end{aligned}
\end{equation}

Evaluating the measurement-space reconstruction score of one candidate
requires $\mathcal O(mN_{\mathcal I})$ operations. Consequently,
Algorithm~\ref{alg:nlarx_tikhonov_inverse_calibration} has the total
arithmetic complexity
\begin{equation}
    \mathcal O\!\left(
       \sum_{\bm\alpha\in\mathfrak A_{\mathcal I}}
       \mathcal C_{\bm\alpha}
       +
       A_{\mathcal I}mN_{\mathcal I}
    \right).
    \label{eq:nlarx_tikhonov_total_complexity}
\end{equation}

Algorithm~\ref{alg:nlarx_pareto_inverse_calibration} independently
constructs, identifies, simulates, and reconstructs its complete
candidate family. It therefore has the same candidate-generation cost as
Algorithm~\ref{alg:nlarx_tikhonov_inverse_calibration}. Evaluation of the
four objectives in \eqref{eq:nlarx_pareto_objectives} requires
$\mathcal O(A_{\mathcal I}mN_{\mathcal I})$ operations. Since the number of
objectives is fixed,
direct pairwise Pareto-dominance testing over
$A_{\mathcal I}$ candidates requires
$\mathcal O(A_{\mathcal I}^{2})$ comparisons. The total arithmetic
complexity of the independent Pareto-based procedure is therefore
\begin{equation}
    \mathcal O\!\left(
       \sum_{\bm\alpha\in\mathfrak A_{\mathcal I}}
       \mathcal C_{\bm\alpha}
       +
       A_{\mathcal I}mN_{\mathcal I}
       +
       A_{\mathcal I}^{2}
    \right).
    \label{eq:nlarx_pareto_total_complexity}
\end{equation}

If the candidates are processed sequentially and only their scores,
objective values, and structure identifiers are retained, the peak
working-memory requirement is bounded by
\begin{equation}
    \mathcal O\!\left(
       p_{\max}K_{\mathcal I}
       +r^\star K_{\mathcal I}
       +p_{\max}^{2}
       +r^\star p_{\max}
       +qK_{\mathcal I}
       +mN_{\mathcal I}
       +A_{\mathcal I}
    \right).
    \label{eq:nlarx_memory_complexity}
\end{equation}
During the Tikhonov search, the current best candidate may be retained
and updated whenever the hierarchical selection criterion identifies a
preferred structure. During the Pareto-based procedure, only the four
objective values and the structure identifier of each candidate need to
be stored; after the Pareto decision, the uniquely selected candidate
may be recomputed once. Thus, neither procedure requires simultaneous
storage of all parameter matrices, simulated coefficient trajectories,
or reconstructed measurement trajectories.

Each of the two Phase~II procedures returns a uniquely selected
finite-window coefficient simulator and its measurement-space reconstruction.
In both cases, candidate parameters and trajectories are generated in Hankel coefficient space
$\mathscr C_{\mathcal I}$, while the Tikhonov or Pareto decision is completed
after reconstruction in physical space $\mathscr Y_{\mathcal I}$. 

In particular, Phase~II selected the best model for the temporal coefficients, multiplied the selected Hankel--Koopman energy-modes by the simulated coefficients, and applied anti-diagonal averaging to obtain
the corresponding reconstructed physical measurements of the channels. This ordering is the
inverse-calibration mechanism.

These reconstructed measurements provide the channel inputs for forecasting the coupled quantity
of interest in Phase~III.
In the following phase,
$\bm\alpha^\star$, $\bm B^\star$,
$\widetilde{\bm C}_{\mathcal I}^\star$, and
$\widehat{\bm Y}_{\mathcal I}^\star$ denote the outputs obtained from either
the Tikhonov-selected or the Pareto-selected Phase~II procedure and passed
directly to Phase~III as inputs.

\subsection{Phase III: Finite-Horizon Recursive Forecasting}
\label{subsec:phase_forecasting}

NLARX models have been employed in environmental, photovoltaic-power, and
air-quality forecasting \cite{Balcha2025,Fentis2019,Moursi2021}, where delayed outputs and exogenous variables are
used to represent nonlinear input-output dynamics.

 In the present work,
Phase~III introduces an original coupling in which the channel reconstructions
provided by Phase~II serve as exogenous inputs to the scalar NLARX model for
forecasting.
Phase~III completes the proposed data-driven twin methodology by forecasting
a channel-dependent quantity-of-interest over the finite forecast horizon
$\mathcal T_{\mathrm{for}}$, introduced in
Definition~\ref{def:finite_observation_forecast_horizon}. 

Phase~III separates
two complementary data regimes. On the finite observation
horizon $\mathcal T_{\mathrm{obs}}$, a scalar NLARX
model for the quantity-of-interest is identified from 
samples of the quantity calculated from the experimental channels values, its parameters being
uniquely selected through a correlation-primary Pareto-based procedure. 
Then, the selected Phase~II multi-output NLARX model
recursively simulates the retained temporal coefficients in the Hankel
coefficient space associated with calibration window $\mathcal T_H$. Their modal reconstruction
and anti-diagonal recovery produce the physical four-channel trajectory
$\widehat{\bm U}_{\mathrm{for}}$, which drives the recursive
quantity-of-interest forecast on the finite forecasting
horizon $\mathcal T_{\mathrm{for}} \subseteq \mathcal T_H$.

The coupling of the two phases requires the Phase~II calibration window to
follow the observation horizon and cover the Phase~III forecasting horizon. Thus,
$\mathcal T_H\subseteq\mathcal T_{\mathrm{fin}}$,
$\mathcal T_{\mathrm{obs}}\cap\mathcal T_H=\varnothing$, and
$\mathcal T_{\mathrm{for}}\subseteq\mathcal T_H$, in accordance with
\eqref{eq:finite_temporal_set_relations}.
 This model-mediated transfer of multichannel dynamics from Phase~II
to the quantity-of-interest forecast in Phase~III is the defining coupling of the unified
framework proposed in this paper.

This section introduces a formal definition of
numerical forecasting as the finite recursive composition of the identified
one-step numerical evolution operator. It subsequently establishes the
well-posedness and finite-horizon stability of the resulting forecast,
formulates the recursive advancement of the quantity-of-interest over
$\mathcal T_{\mathrm{for}}$, examines the relationship between the NLARX
calibration window and the forecast horizon, and presents the corresponding
algorithms and computational-complexity analysis.

\subsubsection{Reconstructed Physical Measurements and Quantity-of-Interest Model}

For the Phase~II recovery selected by either the Tikhonov-based or the
Pareto-based procedure, the physical experimental measurements are
reconstructed from the corresponding simulated temporal coefficients as
\begin{equation}
    \widehat{\bm Y}_{\mathcal I}^\star
    =
    \bar{\bm y}\bm 1_{N_{\mathcal I}}^{\mathsf T}
    +
    \mathscr A_{\mathcal I}\!\left(
        \bm\Phi_{r^\star}
        \widetilde{\bm C}_{\mathcal I}^\star
    \right).
    \label{eq:phase3_phase2_measurement_input}
\end{equation}
The columns of $\widehat{\bm Y}_{\mathcal I}^\star$ corresponding to the
finite forecast horizon $\mathcal T_{\mathrm{for}}$ are transferred to the
Phase~III forecasting stage as the future exogenous inputs and are
collected in the matrix
\begin{equation}
    \widehat{\bm U}_{\mathrm{for}}
    =
    \begin{bmatrix}
        \widehat{\bm u}_1 & \cdots & \widehat{\bm u}_{N_{\mathrm f}}
    \end{bmatrix}
    \in\mathbb F^{m\times N_{\mathrm f}},
    \label{eq:phase3_future_drivers}
\end{equation}
where $\widehat{\bm u}_k$ is the vector of physical channel measurements
reconstructed in Phase~II from the simulated coefficient dynamics and
supplied to the recursive quantity-of-interest forecast at the
$k$th time instant of $\mathcal T_{\mathrm{for}}$. 

\begin{definition}[Channel-dependent quantity-of-interest model]
\label{def:phase3_quantity_of_interest}
Let $Q_{\mathrm I}:\mathbb F^m\rightarrow\mathbb R$ denote a scalar response
associated with the coupled experimental channels, with sampled values
\begin{equation}
    q_k=Q_{\mathrm I}(\bm y_k)+\xi_k,
    \label{eq:phase3_qoi_samples}
\end{equation}
where $\xi_k$ represents measurement error and effects not resolved by the
channel vector. The functional form of $Q_{\mathrm I}$ need not be known. A
\emph{quantity-of-interest scalar-output model} is a data-identified scalar map that
advances $q_k$ from its own past values and from current or delayed channel
values. Identification
of the scalar quantity-of-interest model uses the experimentally measured
channel vectors on observation window $\mathcal T_{\mathrm{obs}}$, whereas the physical channel values reconstructed
in Phase~II drive the recursive forecast on $\mathcal T_{\mathrm{for}}$.
\end{definition}

Let $1\leq N_Q<N$ paired quantity--channel samples be used to
identify the scalar-output model of the quantity-of-interest. Denote them by
$q_1,\ldots,q_{N_Q}$ and $\bm u_1,\ldots,\bm u_{N_Q}$, where the latter are
the corresponding experimentally measured channel vectors on
$\mathcal T_{\mathrm{obs}}$, and set
\begin{equation}
    y_k^Q=\frac{q_k-\mu_Q}{\sigma_Q},
    \qquad
    \bm v_k=
    \operatorname{diag}(\bm\sigma_U)^{-1}
    (\bm u_k-\bm\mu_U),
    \qquad k=1,\ldots,N_Q,
    \label{eq:phase3_qoi_normalization}
\end{equation}
where $\sigma_Q>0$. 

For the scalar NLARX order triple
$\bm\beta=(n_a^Q,n_b^Q,n_k^Q)$, we define
\begin{equation}
    \ell_{\bm\beta}
    =\max\{n_a^Q,n_k^Q+n_b^Q-1\},
    \qquad
    d_{\bm\beta}=n_a^Q+mn_b^Q.
    \label{eq:phase3_qoi_history_dimension}
\end{equation}
For $k=\ell_{\bm\beta}+1,\ldots,N_Q$, the regressor and the nonlinear
feature vector used by the proposed algorithm are
\begin{equation}
\begin{aligned}
    \bm z_{\bm\beta,k}
    &=
    \begin{bmatrix}
      y_{k-1}^{Q}&\cdots&y_{k-n_a^Q}^{Q}&
      \bm v_{k-n_k^Q}^{\mathsf T}&\cdots&
      \bm v_{k-n_k^Q-n_b^Q+1}^{\mathsf T}
    \end{bmatrix}^{\mathsf T},
    \\
    \bm\varphi_{\bm\beta,k}^{Q}
    &=
    \begin{bmatrix}
       1&
       \bm z_{\bm\beta,k}^{\mathsf T}&
       \tanh(\bm z_{\bm\beta,k})^{\mathsf T}&
       (\bm z_{\bm\beta,k}^{\odot2})^{\mathsf T}&
       (\bm z_{\bm\beta,k}^{\odot3})^{\mathsf T}&
       \|\bm z_{\bm\beta,k}\|_2^2
    \end{bmatrix}^{\mathsf T},
    \\
    p_{\bm\beta}&=4d_{\bm\beta}+2.
\end{aligned}
\label{eq:phase3_qoi_regressor_and_features}
\end{equation}
Here $\odot$ denotes componentwise exponentiation. The normalized scalar
NLARX realization is
\begin{equation}
    y_k^Q
    =
    \bm\theta_{\bm\beta}^{\mathsf T}
    \bm\varphi_{\bm\beta,k}^{Q}+\varepsilon_k^Q.
    \label{eq:phase3_qoi_nlarx_model}
\end{equation}
Thus, $n_a^Q$ controls the memory of the quantity-of-interest, $n_b^Q$ the
channel-memory depth, and $n_k^Q$ the channel delay. The value $n_k^Q=0$
allows the current channel vector to drive the current response: an
experimental vector during identification and a Phase~II-reconstructed
physical-channel vector during forecasting.

\subsubsection{Correlation-Based Pareto Identification of Quantity-of-Interest Scalar-Output Model}

Let $M_{\bm\beta}=N_Q-\ell_{\bm\beta}>0$ and define
\begin{equation}
\begin{aligned}
    \bm\Phi_{Q,\bm\beta}
    &=
    \begin{bmatrix}
      \bm\varphi_{\bm\beta,\ell_{\bm\beta}+1}^{Q}&\cdots&
      \bm\varphi_{\bm\beta,N_Q}^{Q}
    \end{bmatrix},
    \\
    \bm y_{Q,\bm\beta}^{+}
    &=
    \begin{bmatrix}
      y_{\ell_{\bm\beta}+1}^{Q}&\cdots&y_{N_Q}^{Q}
    \end{bmatrix}.
\end{aligned}
\label{eq:phase3_qoi_teacher_forced_matrices}
\end{equation}
For $\lambda_Q>0$, the unique ridge candidate associated with
$\bm\beta$ is
\begin{equation}
    \bm\theta_{\lambda_Q,\bm\beta}^{\mathsf T}
    =
    \bm y_{Q,\bm\beta}^{+}
    \bm\Phi_{Q,\bm\beta}^{\mathsf T}
    \left(
      \bm\Phi_{Q,\bm\beta}\bm\Phi_{Q,\bm\beta}^{\mathsf T}
      +\lambda_Q\bm I_{p_{\bm\beta}}
    \right)^{-1}.
    \label{eq:phase3_qoi_ridge_solution}
\end{equation}
Entries whose magnitude is below the prescribed threshold $\tau_Q$ are set
to zero. In the remainder of this subsection, the same symbol
$\bm\theta_{\lambda_Q,\bm\beta}$ denotes this deterministically thresholded
candidate.

Each candidate is evaluated by free-run simulation on
$\mathcal T_{\mathrm{obs}}$: the first $\ell_{\bm\beta}$ normalized outputs
are fixed to their observed values and all subsequent values are generated
recursively using the experimentally measured identification channels.

With
$\bm q_Q=[q_1,\ldots,q_{N_Q}]$ and
$\bm e_{Q,\bm\beta}=\bm q_Q-\widehat{\bm q}_{Q,\bm\beta}$, the four minimized
objectives are
\begin{equation}
\begin{aligned}
    f_1^Q(\bm\beta)
    &=1-\varrho_{N_Q}\!\left(
       \bm q_Q^{\mathsf T},
       \widehat{\bm q}_{Q,\bm\beta}^{\mathsf T}
      \right),
    \\
    f_2^Q(\bm\beta)
    &=\frac{\|\bm e_{Q,\bm\beta}\|_2}{\|\bm q_Q\|_2},
    \\
    f_3^Q(\bm\beta)
    &=\left|
      \varrho_{N_Q-1}\!\left(
       \bm e_{Q,\bm\beta}(1:N_Q-1)^{\mathsf T},
       \bm e_{Q,\bm\beta}(2:N_Q)^{\mathsf T}
      \right)
      \right|,
    \\
    f_4^Q(\bm\beta)
    &=\frac{\|\bm\theta_{\lambda_Q,\bm\beta}\|_2}
            {1+\|\bm\theta_{\lambda_Q,\bm\beta}\|_2},
\end{aligned}
\label{eq:phase3_qoi_pareto_objectives}
\end{equation}
representing temporal-shape correlation loss, relative amplitude error,  residual
persistence, and parameter magnitude, respectively.
Here, $\varrho_{N_Q}$ and $\varrho_{N_Q-1}$ denote the Pearson correlation
coefficients defined in Eq.~\eqref{eq:finite_pearson_correlation} for vectors
of lengths $N_Q$ and $N_Q-1$, respectively.

For a finite feasible family $\mathfrak B_Q$, let $\mathfrak P_Q$ be its
nondominated subset under the objectives in
\eqref{eq:phase3_qoi_pareto_objectives}. The final correlation-primary score
is
\begin{equation}
    S_Q(\bm\beta)
    =
    \mathcal{C}_1 f_1^Q(\bm\beta)+\mathcal{C}_2 f_2^Q(\bm\beta)
    +\mathcal{C}_3 f_3^Q(\bm\beta)+\mathcal{C}_4 f_4^Q(\bm\beta),
    \qquad \bm\beta\in\mathfrak P_Q,\; \mathcal{C}_i\in \mathbb{R}.
    \label{eq:phase3_qoi_final_score}
\end{equation}

\begin{remark}[Correlation-primary decision]
The coefficient hierarchy
\[
\mathcal C_1 \gg \mathcal C_2>\mathcal C_3>\mathcal C_4
\]
in \eqref{eq:phase3_qoi_final_score} assigns a distinctly dominant role to
temporal-shape correlation in the final decision, while relative amplitude
error, residual persistence, and parameter magnitude remain active
secondary criteria. 
Because Pareto filtering precedes the scalar score, a candidate that is strictly
inferior in all four objectives cannot be selected by the weighting rule.
\end{remark}

\subsubsection{Composed Finite-Horizon Forecasting of Quantity-of-Interest}

Let $N_{\mathrm f}\in\mathbb N$ denote the prescribed number of forecasting
steps on $\mathcal T_{\mathrm{for}}$. Each step $h=1,\ldots,N_{\mathrm f}$
corresponds to one sampling instant, one recursively generated value of the
quantity-of-interest, and one application of the selected one-step NLARX
time-advance operator.

To define the delayed channel inputs during the first forecast steps,
the future driver sequence is extended to nonpositive indices by repeating its
first available column:
\begin{equation}
    \widehat{\bm v}^{\,\mathrm{ext}}_j
    =
    \begin{cases}
        \widehat{\bm v}_1, & j\leq 0,\\
        \widehat{\bm v}_j, & j=1,\ldots,N_{\mathrm f}.
    \end{cases}
    \label{eq:phase3_extended_future_drivers}
\end{equation}
At forecast step $h$, the quantity-history state, delayed driver block, and
normalized regressor are
\begin{equation}
\begin{aligned}
    \widehat{\bm s}_{h-1}
    &=
    \begin{bmatrix}
      \widehat y_{N_Q+h-1}^{Q,\mathrm{all}}&
      \cdots&
      \widehat y_{N_Q+h-n_a^{Q,\star}}^{Q,\mathrm{all}}
    \end{bmatrix}^{\mathsf T},
    \\
    \widehat{\bm w}_h
    &=
    \begin{bmatrix}
      \bigl(\widehat{\bm v}^{\,\mathrm{ext}}_
      {h-n_k^{Q,\star}}\bigr)^{\mathsf T}&
      \bigl(\widehat{\bm v}^{\,\mathrm{ext}}_
      {h-n_k^{Q,\star}-1}\bigr)^{\mathsf T}&
      \cdots&
      \bigl(\widehat{\bm v}^{\,\mathrm{ext}}_
      {h-n_k^{Q,\star}-n_b^{Q,\star}+1}\bigr)^{\mathsf T}
    \end{bmatrix}^{\mathsf T},
    \\
    \bm z_h^{\mathrm{for}}
    &=
    \begin{bmatrix}
       \widehat{\bm s}_{h-1}^{\mathsf T}&
       \widehat{\bm w}_h^{\mathsf T}
    \end{bmatrix}^{\mathsf T},
    \qquad h=1,\ldots,N_{\mathrm f}.
    \label{eq:phase3_forecast_regressor}
\end{aligned}
\end{equation}

The selected one-step NLARX time-advance operator is
\begin{equation}
    \mathcal F_{Q,h}^\star(\bm s;\bm w)
    =
    \begin{bmatrix}
      (\bm\theta_Q^\star)^{\mathsf T}
      \bm\varphi_Q\!\left(
        [\bm s^{\mathsf T},\bm w^{\mathsf T}]^{\mathsf T}
      \right)
      \\
      s_1\\[-1mm]
      \vdots\\[-1mm]
      s_{n_a^{Q,\star}-1}
    \end{bmatrix},
    \qquad h=1,\ldots,N_{\mathrm f},
    \label{eq:phase3_qoi_time_advance_operator}
\end{equation}
where the shift block is empty when $n_a^{Q,\star}=1$, and
$\bm\varphi_Q$ has the feature structure in
\eqref{eq:phase3_qoi_regressor_and_features} selected by
$\bm\beta_Q^\star$. The recursive state and the forecast in physical units
are
\begin{equation}
\begin{aligned}
    \widehat{\bm s}_h
    &=
    \mathcal F_{Q,h}^\star
    (\widehat{\bm s}_{h-1};\widehat{\bm w}_h),
    \\
    \widehat y_{N_Q+h}^{Q}
    &=(\widehat{\bm s}_h)_1,
    \\
    \widehat q_{N_Q+h}
    &=
    \mu_Q+\sigma_Q\widehat y_{N_Q+h}^{Q},
    \qquad h=1,\ldots,N_{\mathrm f}.
\end{aligned}
\label{eq:phase3_qoi_recursive_forecast}
\end{equation}

\begin{definition}[Finite-horizon numerical forecasting by operator composition]
\label{def:phase3_numerical_forecasting}
For the fixed reconstructed physical driver blocks supplied by Phase~II,
define the partial
forecast composition by
\begin{equation}
    \mathcal C_{Q,h}^\star
    =
    \mathcal F_{Q,h}^\star(\,\cdot\,;\widehat{\bm w}_h)
    \circ\cdots\circ
    \mathcal F_{Q,1}^\star(\,\cdot\,;\widehat{\bm w}_1),
    \qquad h=1,\ldots,N_{\mathrm f},
    \label{eq:phase3_numerical_operator_composition}
\end{equation}
The finite-horizon numerical forecast on
$\mathcal T_{\mathrm{for}}$ is the trajectory obtained by applying
$\mathcal C_{Q,h}^\star$ to the terminal observed quantity-history state for
$h=1,\ldots,N_{\mathrm f}$ and then using the inverse normalization in
\eqref{eq:phase3_qoi_recursive_forecast}. Hence the forecast is
a finite ordered composition of scalar NLARX time-advance operators driven by
the physical measurements reconstructed in Phase~II from the simulated
coefficient dynamics.
\end{definition}

\begin{theorem}[Finite-horizon stability of the composed numerical forecast]
\label{thm:phase3_composed_forecast_stability}
Let $\mathcal C_{Q,h}^{\star}$ be the finite operator composition introduced
in Definition~\ref{def:phase3_numerical_forecasting}. Consider two forecasts
generated by the same selected NLARX model from initial quantity-history states
$\bm s_0^{(1)}$ and $\bm s_0^{(2)}$ and delayed driver sequences
$\{\bm w_h^{(1)}\}_{h=1}^{N_{\mathrm f}}$ and
$\{\bm w_h^{(2)}\}_{h=1}^{N_{\mathrm f}}$, respectively. Define
\begin{equation}
    \mathcal C_{Q,h}^{(i)}
    =
    \mathcal F_{Q,h}^{\star}
    (\,\cdot\,;\bm w_h^{(i)})
    \circ\cdots\circ
    \mathcal F_{Q,1}^{\star}
    (\,\cdot\,;\bm w_1^{(i)}),
    \qquad i=1,2.
    \label{eq:phase3_perturbed_operator_compositions}
\end{equation}

Assume that the quantity-history states and driver blocks visited by the two
forecasts belong to a compact set. Since the selected feature map
$\bm\varphi_Q$ in
\eqref{eq:phase3_qoi_regressor_and_features} is continuously differentiable,
there exist finite constants $L_s^Q,L_w^Q\geq0$ such that
\begin{equation}
\begin{aligned}
    &\left\|
      \mathcal F_{Q,h}^{\star}(\bm s_1;\bm w_1)
      -
      \mathcal F_{Q,h}^{\star}(\bm s_2;\bm w_2)
    \right\|_2
    \\
    &\qquad\leq
    L_s^Q\|\bm s_1-\bm s_2\|_2
    +
    L_w^Q\|\bm w_1-\bm w_2\|_2
\end{aligned}
\label{eq:phase3_one_step_lipschitz_bound}
\end{equation}
for every visited state--driver pair and every
$h=1,\ldots,N_{\mathrm f}$.

Then the two composed forecast states satisfy
\begin{equation}
\begin{aligned}
    &\left\|
      \mathcal C_{Q,h}^{(1)}(\bm s_0^{(1)})
      -
      \mathcal C_{Q,h}^{(2)}(\bm s_0^{(2)})
    \right\|_2
    \\
    &\qquad\leq
    (L_s^Q)^h
    \|\bm s_0^{(1)}-\bm s_0^{(2)}\|_2
    +
    L_w^Q
    \sum_{j=1}^{h}
    (L_s^Q)^{h-j}
    \|\bm w_j^{(1)}-\bm w_j^{(2)}\|_2,
\end{aligned}
\label{eq:phase3_composed_forecast_stability}
\end{equation}
for $h=1,\ldots,N_{\mathrm f}$. Consequently,
\begin{equation}
\begin{aligned}
    \left|
      \widehat q_{N_Q+h}^{(1)}
      -
      \widehat q_{N_Q+h}^{(2)}
    \right|
    \leq{}&
    \sigma_Q
    \Bigg[
      (L_s^Q)^h
      \|\bm s_0^{(1)}-\bm s_0^{(2)}\|_2
    \\
    &+
      L_w^Q
      \sum_{j=1}^{h}
      (L_s^Q)^{h-j}
      \|\bm w_j^{(1)}-\bm w_j^{(2)}\|_2
    \Bigg].
\end{aligned}
\label{eq:phase3_physical_forecast_stability}
\end{equation}
Thus, the composed numerical forecast is uniquely defined and
Lipschitz stable on every prescribed finite forecast horizon.
\end{theorem}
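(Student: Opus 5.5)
The plan is a standard discrete Grönwall-type induction on the forecast step $h$, applied to the error between the two composed trajectories. First I will settle well-definedness. Each $\mathcal F_{Q,h}^\star(\cdot;\bm w)$ is a finite composition of the feature map $\bm\varphi_Q$ in \eqref{eq:phase3_qoi_regressor_and_features}, which is built from constants, identity, $\tanh$, componentwise powers and a squared norm. This is followed by an inner product with the fixed vector $\bm\theta_Q^\star$ and a linear shift. Hence every application returns a unique finite vector, and the finite composition $\mathcal C_{Q,h}^{(i)}$ is uniquely defined for $h=1,\ldots,N_{\mathrm f}$.

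Next I will justify the one-step bound \eqref{eq:phase3_one_step_lipschitz_bound}. The map $(\bm s,\bm w)\mapsto\mathcal F_{Q,h}^\star(\bm s;\bm w)$ is $C^1$, so its Jacobian blocks with respect to $\bm s$ and $\bm w$ are continuous. On the compact set assumed to contain all visited states and drivers, these blocks are therefore bounded. I will pass to the convex hull of that set, which is still compact, so that the segment joining two visited pairs stays inside the region where the Jacobian bound holds. The mean value inequality then gives the bound, with $L_s^Q$ and $L_w^Q$ taken as the suprema of the Jacobian block norms. The shift rows contribute at most $1$ to $L_s^Q$. Because the operator does not depend on $h$ except through the driver, the constants are uniform in $h$.

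For the main estimate, set $\bm s_h^{(i)}=\mathcal C_{Q,h}^{(i)}(\bm s_0^{(i)})$, so that $\bm s_h^{(i)}=\mathcal F_{Q,h}^\star(\bm s_{h-1}^{(i)};\bm w_h^{(i)})$. Write $e_h=\|\bm s_h^{(1)}-\bm s_h^{(2)}\|_2$ and $d_h=\|\bm w_h^{(1)}-\bm w_h^{(2)}\|_2$. The one-step bound gives the recursion $e_h\le L_s^Q e_{h-1}+L_w^Q d_h$. Induction on $h$ then yields \eqref{eq:phase3_composed_forecast_stability}. The base case $h=1$ is the one-step bound itself. For the inductive step, multiply the bound for $e_{h-1}$ by $L_s^Q$ and add $L_w^Q d_h$; this reproduces the same sum with the index shifted by one.

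Finally, $\widehat q_{N_Q+h}^{(i)}=\mu_Q+\sigma_Q(\bm s_h^{(i)})_1$, so the two forecasts differ by $\sigma_Q$ times the difference of first components. Since $|(\cdot)_1|\le\|\cdot\|_2$, the physical bound \eqref{eq:phase3_physical_forecast_stability} follows.

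The only delicate step is the Lipschitz constant. The mean value argument needs segments between visited points, which is why I enlarge the compact set to its convex hull. I also need the visited pairs to actually lie in that compact set, and I will take this directly from the hypothesis rather than try to prove it. Without it, the cubic and squared-norm features prevent any global Lipschitz bound.
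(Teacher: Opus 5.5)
Your proof is correct and follows essentially the same route as the paper. The one-step bound gives the recursion $e_h\le L_s^Q e_{h-1}+L_w^Q\|\bm w_h^{(1)}-\bm w_h^{(2)}\|_2$, which is unrolled to the composed estimate, and the physical bound follows by extracting the first component and undoing the normalization with the factor $\sigma_Q$. Your mean-value-inequality justification of $L_s^Q$ and $L_w^Q$ on the compact convex hull, with the shift block contributing at most $1$ to $L_s^Q$, fills in a step that the paper simply asserts from the continuous differentiability of $\bm\varphi_Q$.
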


\begin{proof}
Let
\[
    \bm s_h^{(i)}
    =
    \mathcal C_{Q,h}^{(i)}(\bm s_0^{(i)}),
    \qquad i=1,2,
\]
and set
\[
    e_h
    =
    \|\bm s_h^{(1)}-\bm s_h^{(2)}\|_2.
\]
At the $h$th forecast step, the Lipschitz estimate
\eqref{eq:phase3_one_step_lipschitz_bound} gives
\begin{equation}
    e_h
    \leq
    L_s^Q e_{h-1}
    +
    L_w^Q
    \|\bm w_h^{(1)}-\bm w_h^{(2)}\|_2.
    \label{eq:phase3_forecast_error_recursion}
\end{equation}
Successive application of
\eqref{eq:phase3_forecast_error_recursion} yields
\[
\begin{aligned}
    e_h
    \leq{}&
    (L_s^Q)^h e_0
    +
    L_w^Q
    \sum_{j=1}^{h}
    (L_s^Q)^{h-j}
    \|\bm w_j^{(1)}-\bm w_j^{(2)}\|_2,
\end{aligned}
\]
which proves \eqref{eq:phase3_composed_forecast_stability}.

The first component of $\bm s_h^{(i)}$ is the normalized forecast
$\widehat y_{N_Q+h}^{Q,(i)}$. Therefore,
\[
    \left|
      \widehat y_{N_Q+h}^{Q,(1)}
      -
      \widehat y_{N_Q+h}^{Q,(2)}
    \right|
    \leq e_h.
\]
The inverse normalization in
\eqref{eq:phase3_qoi_recursive_forecast} gives
\[
    \left|
      \widehat q_{N_Q+h}^{(1)}
      -
      \widehat q_{N_Q+h}^{(2)}
    \right|
    \leq
    \sigma_Q e_h,
\]
and hence \eqref{eq:phase3_physical_forecast_stability} follows.

Each one-step operator is single-valued and uses the preceding forecast state
together with the driver information available at the current step.
Accordingly, their ordered finite composition defines one step forecast.
All terms in the estimate are finite for
$h=1,\ldots,N_{\mathrm f}$, which completes the proof.
\end{proof}

The ratio between the number of paired samples used to identify the scalar
NLARX model and the number of recursively forecasted samples is
\begin{equation}
    \mathcal R_{Q/\mathrm{for}}
    =\frac{N_Q}{N_{\mathrm f}}.
    \label{eq:phase3_calibration_forecast_ratio}
\end{equation}

A larger value indicates that more identification samples are available per
forecasted step, whereas the absolute horizon $N_{\mathrm f}$ determines the
number of successive operator applications in
\eqref{eq:phase3_numerical_operator_composition}. Hence,
$\mathcal R_{Q/\mathrm{for}}$ serves as a finite-horizon
experimental-design indicator, while the stability constants in
Theorem~\ref{thm:phase3_composed_forecast_stability} quantify the stability
of the selected operator composition over the forecast set traversed by the
numerical solution.

\subsubsection{Algorithmic Formulation and Computational Complexity}

Algorithm~\ref{alg:phase3_qoi_identification} identifies and uniquely selects
the quantity-of-interest scalar-output model from the paired experimental quantity and
channel measurements on $\mathcal T_{\mathrm{obs}}$.
Algorithm~\ref{alg:phase3_qoi_forecast} then advances the selected scalar model
recursively using the physical channel inputs reconstructed in Phase~II on
$\mathcal T_{\mathrm{for}}$.

\begin{algorithm}[!htbp]
\caption{Correlation-primary Pareto identification of the
quantity-of-interest NLARX model}
\label{alg:phase3_qoi_identification}
\begin{algorithmic}[1]
\Require Experimental quantity samples $q_1,\ldots,q_{N_Q}$ and corresponding
         experimentally measured channel samples
         $\bm u_1,\ldots,\bm u_{N_Q}$ on
         $\mathcal T_{\mathrm{obs}}\subseteq\mathcal T_{\mathrm{fin}}$,
         finite order family $\mathfrak B_Q$, $\lambda_Q>0$, threshold
         $\tau_Q\geq0$, and optional refinement flag.
\Ensure Unique selected structure $\bm\beta_Q^\star$, parameter vector
        $\bm\theta_Q^\star$, selected free-run
        simulation, complete candidate table, and Pareto table.
\State Normalize the quantity and experimentally measured channels using
       \eqref{eq:phase3_qoi_normalization}.
\ForAll{$\bm\beta\in\mathfrak B_Q$}
    \State Compute $\ell_{\bm\beta}$ and discard the structure if
           $N_Q-\ell_{\bm\beta}\leq0$.
    \State Construct the nonlinear regressor and target matrices using observed coefficient histories in 
           \eqref{eq:phase3_qoi_teacher_forced_matrices}.
    \State Compute \eqref{eq:phase3_qoi_ridge_solution} and set parameter
           entries with magnitude below $\tau_Q$ to zero.
    \State Simulate freely from the prescribed initial history and
           inverse-normalize.
    \State Evaluate the four objectives in
           \eqref{eq:phase3_qoi_pareto_objectives}.
\EndFor
\State Determine the nondominated set $\mathfrak P_Q$.
\State Minimize \eqref{eq:phase3_qoi_final_score} on $\mathfrak P_Q$ and
       resolve exact ties by the predefined candidate order.
\State \Return $\bm\beta_Q^\star$, $\bm\theta_Q^\star$, selected simulation, and result tables.
\end{algorithmic}
\end{algorithm}

\begin{algorithm}[!htbp]
\caption{Composed finite-horizon numerical forecast of the quantity-of-interest}
\label{alg:phase3_qoi_forecast}
\begin{algorithmic}[1]
\Require Selected quantity model
         $(\bm\beta_Q^\star,\bm\theta_Q^\star)$; terminal quantity history;
         physical channel inputs reconstructed in Phase~II:
         $\widehat{\bm U}_{\mathrm{for}}$ on
         $\mathcal T_{\mathrm{for}}\subseteq\mathcal T_H$; 
         $N_{\mathrm f}\in\mathbb N$, the prescribed number of forecasted
         quantity samples and, equivalently, the number of recursively
         composed one-step operators over $\mathcal T_{\mathrm{for}}$.
\Ensure Quantity forecast
        $[\widehat q_{N_Q+1},\ldots,
          \widehat q_{N_Q+N_{\mathrm f}}]$ on
        $\mathcal T_{\mathrm{for}}$.
\State Normalize the quantity history and future channels.
\For{$h=1,\ldots,N_{\mathrm f}$}
    \State Assemble the autoregressive quantity lags from the observed
           history and previously generated values.
    \State Assemble the channel lags using the left-boundary rule in
           \eqref{eq:phase3_forecast_regressor}.
    \State Evaluate the selected nonlinear feature vector and append the new
           normalized value from
           \eqref{eq:phase3_qoi_recursive_forecast}.
\EndFor
\State Inverse-normalize the final $N_{\mathrm f}$ values.
\State \Return the quantity-of-interest forecast values.
\end{algorithmic}
\end{algorithm}

For complexity, let $A_Q=|\mathfrak B_Q|$ and
$M_{\bm\beta}=N_Q-\ell_{\bm\beta}$. For one candidate, feature construction,
formation and solution of the ridge system, free-run simulation, and objective
evaluation require
\begin{equation}
    \mathcal O\!\left(
      M_{\bm\beta}p_{\bm\beta}^{2}
      +p_{\bm\beta}^{3}
      +N_Qp_{\bm\beta}
    \right)
    \label{eq:phase3_qoi_candidate_complexity}
\end{equation}
operations. Direct Pareto-dominance testing adds
$\mathcal O(A_Q^2)$ comparisons. Excluding the solver-dependent optional
refinement, Algorithm~\ref{alg:phase3_qoi_identification} has total arithmetic
complexity
\begin{equation}
    \mathcal O\!\left(
      \sum_{\bm\beta\in\mathfrak B_Q}
      \left[
        M_{\bm\beta}p_{\bm\beta}^{2}
        +p_{\bm\beta}^{3}
        +N_Qp_{\bm\beta}
      \right]
      +A_Q^2
    \right).
    \label{eq:phase3_qoi_identification_complexity}
\end{equation}

Because the physically reconstructed channel inputs are supplied by Phase~II, the Phase~III
forecasting cost consists of normalizing the $mN_{\mathrm f}$ channel entries
and recursively evaluating the selected scalar feature map. These operations
require
\begin{equation}
    \mathcal O\!\left(
       N_{\mathrm f}
       \bigl(m+p_{\bm\beta_Q^\star}\bigr)
    \right).
    \label{eq:phase3_qoi_forecast_complexity}
\end{equation}

\subsection{End-to-End Algorithm for Data-Driven Twin Modeling and Forecasting}\label{End-to-End}

The complete framework maps finite experimental multichannel data and sampled
values of a channel-dependent quantity-of-interest into a finite-horizon
forecast. The finite windows used in this phase-to-phase coupling satisfy
\[  
    \mathcal T_{\mathrm{obs}}\cup\mathcal T_{\mathrm{for}}\subseteq\mathcal T_{\mathrm{fin}},
    \qquad
    \mathcal T_H\subseteq\mathcal T_{\mathrm{fin}},
    \qquad
    \mathcal T_{\mathrm{for}}\subseteq\mathcal T_H,
    \qquad
    \mathcal T_{\mathrm{obs}}\cap\mathcal T_H=\varnothing.
\]
Moreover, $\mathcal T_H$ temporally succeeds
$\mathcal T_{\mathrm{obs}}$, in the sense that
\[
    \max \mathcal T_{\mathrm{obs}}
    <
    \min \mathcal T_H.
\]
Here, $\mathcal T_{\mathrm{obs}}$ is the Phase~III observation horizon used
to identify the quantity-of-interest model,
$\mathcal T_{\mathrm{for}}$ is its finite forecast horizon, and
$\mathcal T_{\mathrm{fin}}$ is the entire finite modeling horizon. The
disjoint Phase~II calibration window $\mathcal T_H$ contains
$\mathcal T_{\mathrm{for}}$. The Hankel coefficient block associated with
$\mathcal T_H$ supplies, after coefficient-space simulation and physical-space
inverse reconstruction, the four-channel trajectory throughout the forecast
horizon.

Phase~I constructs an energy-ranked Hankel--Koopman representation from the
finite experimental multichannel record. Phase~II identifies and simulates
the nonlinear evolution of the retained temporal coefficients in the Hankel
coefficient space associated with $\mathcal T_H$, reconstructs the associated
four-channel measurements, and evaluates its selection criteria in the
physical measurement space on $\mathcal T_H$. Phase~III identifies a scalar NLARX model for the quantity of
interest from paired experimental quantity--channel samples on
$\mathcal T_{\mathrm{obs}}$ and then advances this model recursively with the
Phase~II-reconstructed physical channel inputs
$\widehat{\bm U}_{\mathrm{for}}$ on
$\mathcal T_{\mathrm{for}}$. Thus, the three phases form one ordered transfer
from experimental channels, through reduced nonlinear dynamics and
reconstructed physical channel measurements, to the forecast of the quantity
of interest.

Algorithm~\ref{alg:end_to_end_data_driven_twin} assembles these operations as
a single computational workflow and makes explicit the distinct role of each
phase. The Tikhonov-based and Pareto-based Phase~II recoveries constitute two
alternative branches of the same end-to-end workflow.

\begin{algorithm}[!htbp]
\caption{End-to-end data-driven twin model and forecasting workflow}
\label{alg:end_to_end_data_driven_twin}
\begin{algorithmic}[1]
\Require Multichannel experimental measurements $\bm Y$ on the finite modeling horizon
         $\mathcal T_{\mathrm{fin}}$; temporal sets satisfying
         \eqref{eq:finite_temporal_set_relations};
         quantity-of-interest samples $q_1,\ldots,q_{N_Q}$ on
         $\mathcal T_{\mathrm{obs}}$; finite forecast
         horizon $\mathcal T_{\mathrm{for}}$; and regularization, order-search, and selection parameters
         required by Algorithms~\ref{alg:hkfed}--\ref{alg:phase3_qoi_forecast}.
\Ensure Selected Koopman-energy triplet family, including the energy
        eigenvalues, Hankel--Koopman energy modes, and coefficients; selected
        Phase~II multi-output coefficient simulator and reconstructed physical
        measurements; selected
        quantity-of-interest scalar-output model; and the finite-horizon forecast
        $[\widehat q_{N_Q+1},\ldots,
          \widehat q_{N_Q+N_{\mathrm f}}]$.
\Statex \textbf{Phase I: Hankel--Koopman finite-horizon energy decomposition (HKFED)}
\State Apply Algorithm~\ref{alg:hkfed} to $\bm Y$ and obtain
       $\mathfrak T_{r^\star}=(\bm\Lambda_{G,r^\star},
       \bm\Phi_{r^\star},\bm C_{r^\star})$, its cardinality $r^\star$,
       and the corresponding reconstructed measurements.
\Statex \textbf{Phase II: Inverse-calibrated multi-output NLARX model}
\State Restrict the measurements and selected coefficients to the admissible
       calibration window $(\mathcal T_H,\mathcal I_H)$, chosen so that
       $\mathcal T_H\subseteq\mathcal T_{\mathrm{fin}}$ and
       $\mathcal T_{\mathrm{for}}\subseteq\mathcal T_H$.
\State Identify and freely simulate every Phase~II NLARX candidate in the
       associated Hankel coefficient space; reconstruct each candidate by
       modal multiplication and anti-diagonal averaging, and evaluate its
       Tikhonov or Pareto selection criteria in the physical measurement
       space on $\mathcal T_H$.
\If{the Tikhonov-based recovery is desired}
    \State Apply
           Algorithm~\ref{alg:nlarx_tikhonov_inverse_calibration}.
\Else
    \State Apply
           Algorithm~\ref{alg:nlarx_pareto_inverse_calibration} for Pareto-based recovery.
\EndIf
\State Denote the uniquely selected outputs of the prescribed Phase~II
       procedure by $\bm\alpha^\star$, $\bm B^\star$,
       $\widetilde{\bm C}_{\mathcal I}^\star$, and
       $\widehat{\bm Y}_{\mathcal I}^\star$.
\Statex \textbf{Phase III: Finite-horizon recursive forecasting}
\State Pair $q_1,\ldots,q_{N_Q}$ with the experimentally measured channel
       columns on $\mathcal T_{\mathrm{obs}}$, and form the reconstructed input
       matrix $\widehat{\bm U}_{\mathrm{for}}$ from the columns of
       $\widehat{\bm Y}_{\mathcal I}^\star$ indexed by
       $\mathcal T_{\mathrm{for}}$.
\State Apply Algorithm~\ref{alg:phase3_qoi_identification} and obtain the
       unique correlation-primary structure $\bm\beta_Q^\star$, parameter
       vector $\bm\theta_Q^\star$, and selected
       free-run simulation.
\State Apply Algorithm~\ref{alg:phase3_qoi_forecast} to the terminal observed
       quantity history and $\widehat{\bm U}_{\mathrm{for}}$, and compose the
       selected one-step numerical operators over $N_{\mathrm f}$ steps.
\State \Return $\bm\Phi_{r^\star}$, $\bm C_{r^\star}$,
       $\bm\alpha^\star$, $\bm B^\star$,
       $\widetilde{\bm C}_{\mathcal I}^\star$,
       $\widehat{\bm Y}_{\mathcal I}^\star$,
       $\bm\beta_Q^\star$, $\bm\theta_Q^\star$, and
       $[\widehat q_{N_Q+1},\ldots,
         \widehat q_{N_Q+N_{\mathrm f}}]$.
\end{algorithmic}
\end{algorithm}

Algorithm~\ref{alg:end_to_end_data_driven_twin} is end-to-end in the sense
that the modal representation, the reduced coefficient dynamics, the
reconstructed physical measurements, and the quantity-of-interest forecast
are constructed successively from finite sampled data. The output of each
phase supplies the input required by the next one.

The propsed end-to-end numerical algorithm is accompanied by a rigorous
mathematical analysis of its three phases.
Theorem~\ref{thm:finite_horizon_hk_energy} establishes the
Hankel--Koopman finite-horizon energy decomposition;
Theorem~\ref{thm:nlarx_existence_uniqueness_inverse_stability} proves the
existence, uniqueness, and inverse stability of the multi-output NLARX
model governing the temporal coefficients in Hankel space; and
Theorem~\ref{thm:phase3_composed_forecast_stability} establishes the
finite-horizon stability of the scalar-output quantity-of-interest forecast.
Consequently, under the combined hypotheses of these theorems, the workflow
returns a data-driven twin model together with a
forward-stable quantity-of-interest forecast over finite-horizon
$\mathcal T_{\mathrm{for}}$.

\section{Numerical Experiments}\label{Num_exp}

This section presents the numerical investigation of the proposed three-phase data-driven twin modeling and forecasting framework, beginning with the data set and computational setting common to all three phases and subsequently reporting the numerical results obtained in each phase.

\subsection{Experimental Data and Computational Setting}
\label{subsec:numerical_experiment_setting}

Accurate solar-power forecasting is essential for the reliable integration of
photovoltaic generation into modern power systems, supporting operational
planning, grid stability, energy management, and the efficient utilization of renewable
resources \cite{LafuenteCacho2025,Cargan2024,Salman2024,CevikBektas2024}.

Accordingly, a solar power plant is selected as the experimental setting for
evaluating the proposed three-phase data-driven twin methodology under
multichannel, nonlinear, and weather-dependent operating conditions.
The numerical experiment concerns the meteorological drivers of a nominal
photovoltaic installation situated in the Deva--Hunedoara region of Romania,
at latitude $45.88^{\circ}$~N and longitude $22.88^{\circ}$~E. Hourly
historical observations were obtained from the Historical Weather API \cite{OpenMeteoHistoricalWeather} for the interval 1 January--31 March 2026.  The complete downloaded record contains ambient
temperature at $2$~m, relative humidity at $2$~m, wind speed at $10$~m, cloud
cover, and shortwave solar radiation. Invalid observations are removed, the
remaining samples are sorted chronologically, and duplicated time stamps are
discarded before the observation, calibration and forecast windows are formed.

Shortwave radiation is used to distinguish daytime observations from
nighttime values and, together with ambient temperature, is subsequently
used to construct the photovoltaic quantity-of-interest. More precisely, only observations for which the shortwave
radiation exceeds  $5\ W/m^2$  are retained. The daylight filter is applied after the hourly
record has been downloaded, hence, the retained daytime observations are
chronologically ordered samples.

In accordance with \eqref{eq:experimental_measurements} and
\eqref{eq:experimental_snapshot_matrix}, Phase~I is applied to the snapshot
matrix
\begin{equation}
    \bm Y
    =
    \begin{bmatrix}
        \bm y_1 & \bm y_2 & \cdots & \bm y_N
    \end{bmatrix}
    =
    \begin{bmatrix}
        (\bm y_1)_1 & (\bm y_2)_1 & \cdots & (\bm y_N)_1\\
        (\bm y_1)_2 & (\bm y_2)_2 & \cdots & (\bm y_N)_2\\
        (\bm y_1)_3 & (\bm y_2)_3 & \cdots & (\bm y_N)_3\\
        (\bm y_1)_4 & (\bm y_2)_4 & \cdots & (\bm y_N)_4
    \end{bmatrix}
    \in\mathbb{R}^{m\times N}.
    \label{eq:numerical_phase1_data_matrix}
\end{equation}
Thus, the $k$th column of \eqref{eq:numerical_phase1_data_matrix} is the
four-component measurement vector $\bm y_k$ recorded at $t_k$,  whereas each row contains the complete
finite record of one measured channel over finite modeling horizon $\mathcal T_{\mathrm{fin}}$. From
the first to the fourth row, the channels are cloud cover, ambient temperature
at $2$~m, wind speed at $10$~m, and relative humidity at $2$~m.

Data source and variables used in the numerical
experiment are presented in
Table~\ref{tab:numerical_experiment_setting} and thereby support
reproducibility of the numerical study.

\begin{table}[!htbp]
\caption{Data source, variables, and temporal partition used in the numerical
experiment.}
\label{tab:numerical_experiment_setting}%
\begin{tabular*}{\textwidth}{@{\extracolsep\fill}lll}
\toprule
Item & Symbol or channel & Setting \\
\midrule
Geographical location & --
    & $(45.88^{\circ}\mathrm{N},22.88^{\circ}\mathrm{E})$ \\
Data source & -- & Historical Weather API \cite{OpenMeteoHistoricalWeather}\\
Downloaded interval & -- & 1 January--31 March 2026 \\
Temporal resolution & -- & $1$ h \\
Number of measured channels & $m$ & $4$ \\
Modeling samples & $N$ & $461$ \\
Finite modeling horizon & $\mathcal T_{\mathrm{fin}}$ & $[1,\;461]$ \\
Row 1 of $\bm Y$ & -- & cloud cover [\%] \\
Row 2 of $\bm Y$ & -- & ambient temperature at $2$ m [$^{\circ}$C] \\
Row 3 of $\bm Y$ & -- & wind speed at $10$ m [m/s] \\
Row 4 of $\bm Y$ & -- & relative humidity at $2$ m [\%] \\
Daylight/response variable & -- & shortwave radiation [W/m$^2$] \\
Nominal PV parameters & --
    & $(10\ \mathrm{kW},-0.0035\ ^{\circ}\mathrm{C}^{-1},46^{\circ}\mathrm{C})$ \\
\botrule
\end{tabular*}
\end{table}

\subsection{Phase I Results: Hankel--Koopman Finite-Horizon Energy Decomposition}
\label{subsec:numerical_phase1}

Phase~I applies Algorithm~\ref{alg:hkfed} (HKFED) to the centered form of
\eqref{eq:numerical_phase1_data_matrix}. For the present values $m=4$ and
$N=461$, the serialization in
\eqref{eq:hkfed_number_hankel_columns} gives $N_H=mN=1844$. The imposed Hankel delay depth
$q=200$ is consistent with $q\approx\lfloor N/2\rfloor$ and, through
\eqref{eq:multichannel_delay_vector}, gives $K=N_H-q+1=1645$. The computation
then follows the Koopman approximation, finite-horizon energy ranking, Pareto
selection, and anti-diagonal recovery established in Phase~I. Its output is
the selected candidate
$\mathfrak T_{r^\star}=(\bm\Lambda_{G,r^\star},
\bm\Phi_{r^\star},\bm C_{r^\star})$, together with the reconstructed matrix
$\widehat{\bm Y}$ in \eqref{eq:recovered_experimental_channels}.
In the present experiment the persistence
horizon is fixed at $L=100$.
The parameters and matrix dimensions used in Phase~I are reported in
Table~\ref{tab:phase1_parameters}. 

\begin{table}[!htbp]
\caption{Phase~I HKFED parameters and data-dependent matrix dimensions.}
\label{tab:phase1_parameters}%
\begin{tabular*}{\textwidth}{@{\extracolsep\fill}lll}
\toprule
Quantity & Symbol & Value \\
\midrule
Serialized centered-data length & $N_H=mN$ & 1844\\
Imposed Hankel delay depth (rows) & $q$ & 200 \\
Hankel columns & $K=N_H-q+1$ & 1645 \\
Hankel-matrix size & $q\times K$ & $200\times 1645$ \\
Data-supported rank & $p=\operatorname{rank}(\bm H_0)\leq\min\{q,K-1\}$ & 200 \\
Operator horizon & $L$ & $100$ \\
Energy-retention level & $\eta$ & $0.9999$ ($99.99\%$) \\
Dimension-preference weight & $\alpha_{\mathrm{dim}}$ & 0.03 \\
Weak dimension penalty & $\beta_{\mathrm{dim}}$ & 0.02 \\
Admissible dimension interval & $[r_{\min},r_{\max}]$ & $[100,180]$ \\
Preferred dimension & $r_{\mathrm{tar}}$ & 150 \\
\botrule
\end{tabular*}
\end{table}

\subsubsection{Energy Spectrum and Finite-Horizon Triplet Ranking}
\label{subsubsec:numerical_phase1_energy}

Figure~\ref{fig:phase1_selected_eigenvalues} presents the spectrum of Koopman energy operator matrix
$\bm G_p$, displayed in the decreasing order
$\mu_1\geq\cdots\geq\mu_p$. The  highlighted subset
$\{\mu_{\pi_\ell}:\ell=1,\ldots,r^\star\}$  are the spectral
components of the modal triplets retained by the finite-horizon Pareto
procedure, belonging to the selected triplet family
$\mathfrak T_{r^\star}=(\bm\Lambda_{G,r^\star},
\bm\Phi_{r^\star},\bm C_{r^\star})$. They are Koopman-energy eigenvalues, linked to 
Koopman spectrum through \eqref{eq:normal_koopman_energy_spectral_link}.

\begin{figure}[!h]
\centering
 \includegraphics[width=0.90\textwidth]{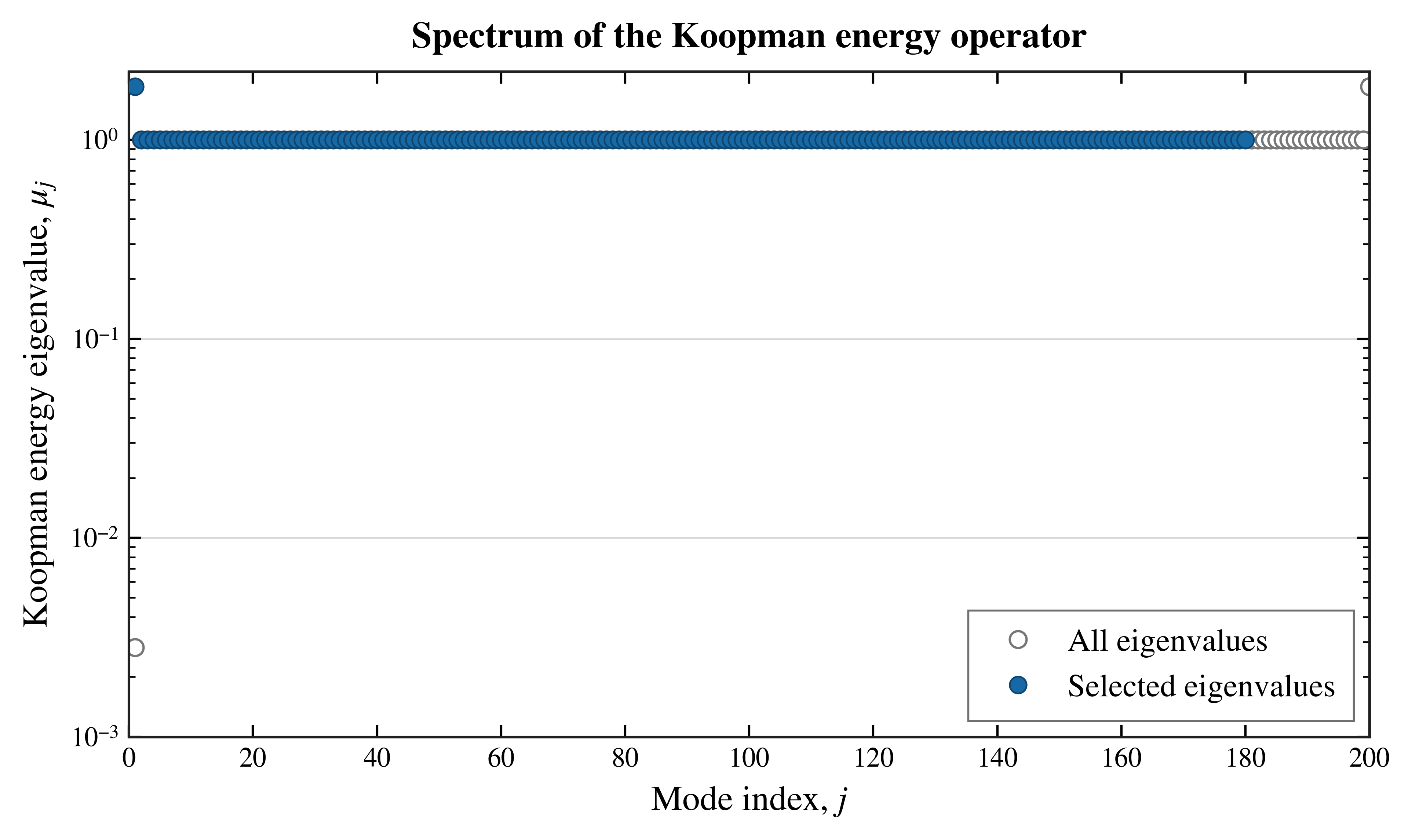}
\caption{Spectrum of the Koopman energy operator
$\bm G_p=\widetilde{\bm K}_p^{\mathrm H}\widetilde{\bm K}_p$. The
eigenvalues are shown in decreasing order, and those belonging to the selected
triplet family
$\mathfrak T_{r^\star}=(\bm\Lambda_{G,r^\star},
\bm\Phi_{r^\star},\bm C_{r^\star})$ are highlighted. The unretained
data-supported Koopman-energy directions are shown for reference.}
\label{fig:phase1_selected_eigenvalues}
\end{figure}

The two factors entering the proposed finite-horizon modal energy are reported
separately in Fig.~\ref{fig:phase1_modal_energy}. Figure~\ref{fig:phase1_modal_energy}a shows finite-horizon modal energies $E_{\pi_\ell}^{(100)}$ of the Hankel--Koopman energy modes in decreasing order, for finite horizon 
$L=100$,  used by Algorithm~\ref{alg:hkfed}. Figure~
\ref{fig:phase1_modal_energy}b reports finite-horizon persistence factors $P_{\pi_\ell}^{(100)}$ in the same
ordering. Values near $101$ correspond to
$\mu_{\pi_\ell}\approx1$, whereas smaller or larger values indicate,
respectively, attenuating or amplifying energetic directions over the
prescribed finite horizon.  Decreasing-energy rank
$\ell$ corresponds to the complete modal triplet
$\mathfrak t_{\pi_\ell}=(\mu_{\pi_\ell},\bm\phi_{\pi_\ell},
\bm c_{\pi_\ell}^{\mathsf T})$.

\begin{figure}[!h]
\centering
 \includegraphics[width=0.90\textwidth]{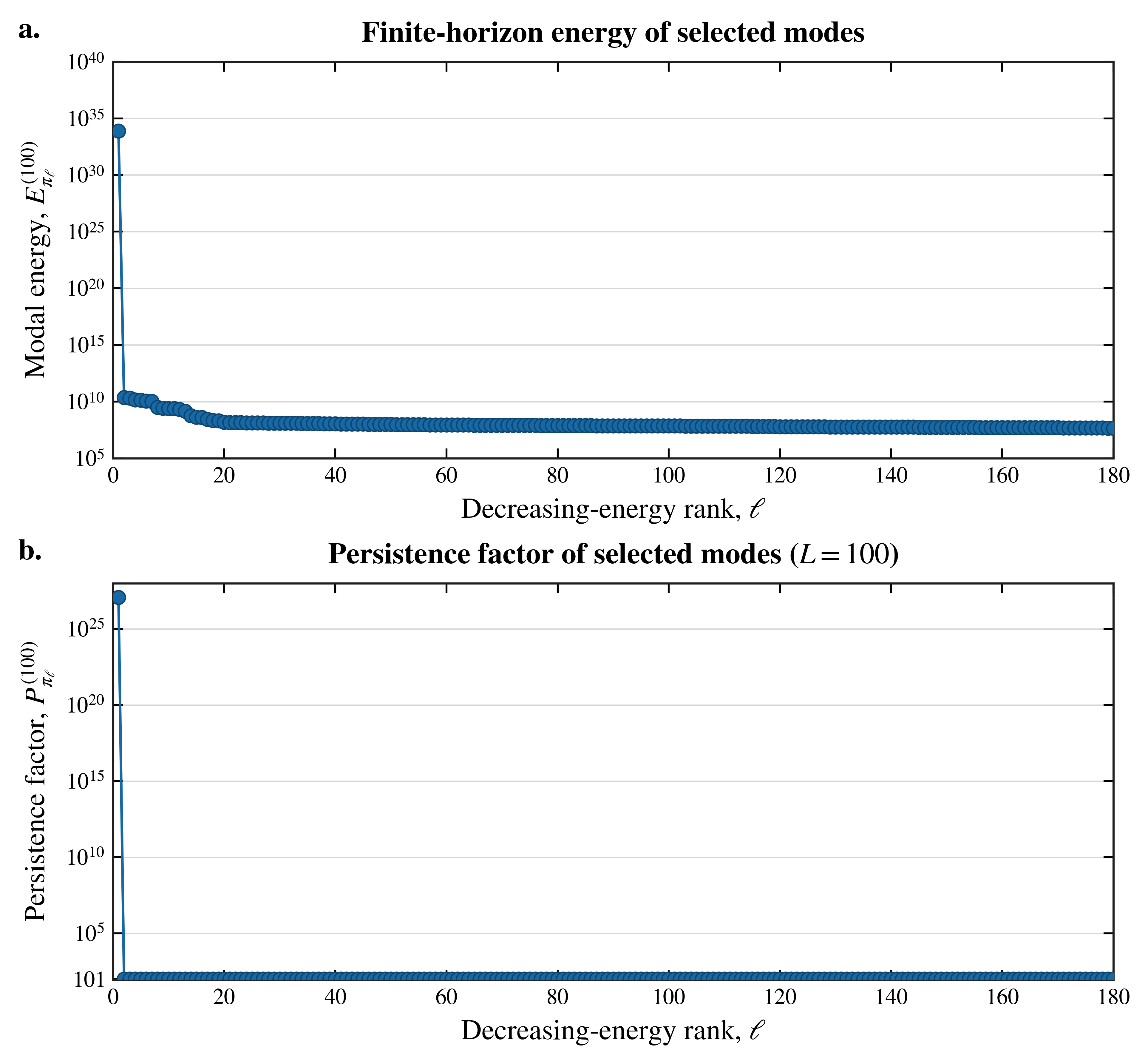}
\caption{a. Finite-horizon modal energies of the Hankel--Koopman energy modes for
$L=100$. The abscissa gives the decreasing-energy rank $\ell$, and the
ordinate gives $E_{\pi_\ell}^{(100)}$.\\
b. Finite-horizon persistence factors
$P_{\pi_\ell}^{(100)}=\sum_{k=0}^{100}\mu_{\pi_\ell}^{k}$ for the selected
Hankel--Koopman energy modes. Values near $101$ correspond to
$\mu_{\pi_\ell}\approx1$, whereas smaller or larger values indicate,
respectively, attenuating or amplifying energetic directions over the
prescribed finite horizon. }
\label{fig:phase1_modal_energy}
\end{figure}

\subsubsection{Pareto Selection of the Retained Koopman-Energy Candidate}
\label{subsubsec:numerical_phase1_pareto}

The $p=200$ Koopman-energy candidates were evaluated using the relative
Hankel reconstruction error $\varepsilon_r$ and the matrix
cosine similarity $\gamma_r$, defined in
Eqs.~\eqref{eq:hkfed_reconstruction_error} and
\eqref{eq:hkfed_cosine_similarity}. Each index $r\in\mathcal R$ specifies the
complete candidate
   $ \mathfrak T_r
    =
    \left(
        \bm\Lambda_{G,r},
        \bm\Phi_r,
        \bm C_r
    \right).$

Figure~\ref{fig:phase1_pareto_selection}a presents the candidates in the
accuracy plane $(\varepsilon_r,\gamma_r)$. The upper-left region represents
values of $\varepsilon_r$ near zero and values of $\gamma_r$ near one. The
Pareto classification was computed from the penalized minimization objectives
$F_1$ and $F_2$ in Eq.~\eqref{eq:hkfed_pareto_objectives}. The open circles
represent all candidates, the black square identifies the Pareto-optimal
candidate with $r=200$, and the blue diamond identifies the selected candidate
with $r^\star=180$.

For the present experiment, the Pareto-optimal set is
   $ \mathcal P=\{200\}.$
The prescribed admissible set is
   $ \mathcal R_{\mathrm{adm}}
    =
    \{100,\ldots,180\}$,
corresponding to $[r_{\min},r_{\max}]=[100,180]$. Hence,
$\mathcal P\cap\mathcal R_{\mathrm{adm}}=\varnothing$, and
Eq.~\eqref{eq:hkfed_pruned_pareto_set} gives
   $ \mathcal C=\mathcal R_{\mathrm{adm}}$.

The normalized decision score $J(r)$ was evaluated for every
$r\in\mathcal C$.

Figure~\ref{fig:phase1_pareto_selection}b reports
$\varepsilon_r$ and $\gamma_r$ as functions of the retained cardinality $r$.
The dotted vertical line marks the prescribed preferred dimension
$r_{\mathrm{tar}}=150$. The dashed vertical line marks
$r^\star=180$, obtained from
Eq.~\eqref{eq:hkfed_final_selection_score}. At
$r=r_{\mathrm{tar}}$, the dimension-deviation term
$\widehat{\chi}_r$ is zero. The selected value $r^\star=180$ is the minimizer
of $J(r)$ over $\mathcal C$.

The Phase~I selection is therefore
\[
    \mathfrak T_{180}
    =
    \left(
        \bm\Lambda_{G,180},
        \bm\Phi_{180},
        \bm C_{180}
    \right).
\]
This candidate contains $180$ of the $p=200$ data-supported modal triplets and
has the retained-rank fraction
\[
    \frac{r^\star}{p}
    =
    \frac{180}{200}
    =
    0.90.
\]
The modes $\bm\Phi_{180}$ and the coefficient matrix $\bm C_{180}$ provide the
reduced modal representation used in the subsequent phases.

\begin{figure}[!htbp]
\centering
\includegraphics[width=0.90\textwidth]{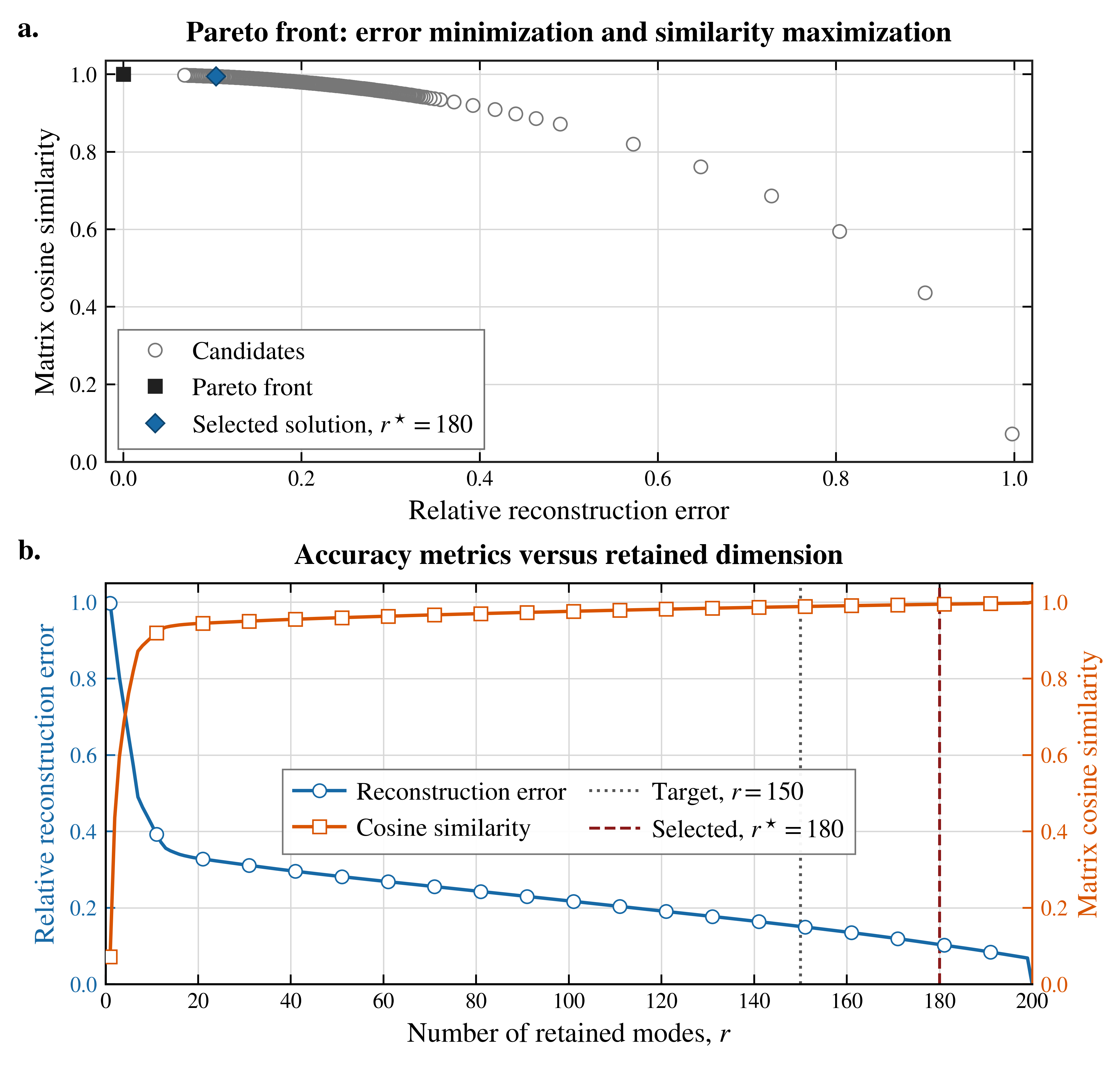}
\caption{Pareto selection of the retained Phase~I Koopman-energy candidate.\\
a. Candidate distribution in the accuracy plane
$(\varepsilon_r,\gamma_r)$. Each open circle represents
$\mathfrak T_r=(\bm\Lambda_{G,r},\bm\Phi_r,\bm C_r)$ for
$r\in\mathcal R$. The black square marks the Pareto-optimal candidate
$r=200$, and the blue diamond marks the selected candidate
$r^\star=180$. The upper-left region corresponds to
$\varepsilon_r\rightarrow0$ and $\gamma_r\rightarrow1$.\\
b. Relative Hankel reconstruction error $\varepsilon_r$ and
matrix cosine similarity $\gamma_r$ as functions of the retained cardinality
$r$. The dotted vertical line marks $r_{\mathrm{tar}}=150$, and the dashed
vertical line marks $r^\star=180$, obtained by minimizing the normalized
decision score $J(r)$ over
$\mathcal C=\{100,\ldots,180\}$.}
\label{fig:phase1_pareto_selection}
\end{figure}

The principal outcome of the discrete decision procedure is the selected
triplet family $\mathfrak T_{r^\star}$. Table \ref{tab:phase1_pareto_selection} summarizes  the Phase~I finite-horizon Pareto selection data.

\begin{table}[!htbp]
\caption{Summary of the Phase~I finite-horizon Pareto selection.}
\label{tab:phase1_pareto_selection}%
\begin{tabular*}{\textwidth}{@{\extracolsep\fill}lll}
\toprule
Reported quantity & Symbol & Numerical value \\
\midrule
Number of Pareto-optimal candidates & $|\mathcal P|$ & 200 \\
Selected reduced dimension & $r^\star$ & 180\\
Retained-rank fraction & $r^\star/p$ & 0.90 \\
Nominal factor-storage compression ratio & $\mathrm{CR}_{r^\star}$ & 0.9907 \\
Relative Hankel reconstruction error & $\varepsilon_{r^\star}$ & 0.0696 \\
Matrix cosine similarity & $\gamma_{r^\star}$ & 0.9976 \\
Phase~I execution time & -- & 1.3761 s \\
\botrule
\end{tabular*}
\end{table}

For $q=200$, $K=1645$, and $r^\star=180$,
Eq.~\eqref{eq:compression_ratio} gives
$\mathrm{CR}_{r^\star}=0.9907<1$. Thus, storing the two factors separately
does not provide entrywise storage compression in this experiment. The
reduced-order character is instead the explicit spectral truncation
$r^\star/p=0.90<1$, consistent with
Definition~\ref{def:finite_horizon_experimental_rom}; its computational role
is to restrict the recursively evolved coefficient state to the selected
modal subspace.

\subsubsection{Orthogonality and Reconstruction of the Experimental Channels}
\label{subsubsec:numerical_phase1_reconstruction}

The selected modal matrix
$\bm\Phi_{r^\star}=\bm\Phi_{180}$ was first assessed through its Gram matrix
\begin{equation}
    \bm G_{\Phi,r^\star}
    =
    \bm\Phi_{r^\star}^{\mathrm H}
    \bm\Phi_{r^\star}.
    \label{eq:numerical_phase1_gram_matrix}
\end{equation}
According to Proposition~\ref{prop:hkfed_mode_orthogonality}, exact
orthonormality gives
$\bm G_{\Phi,r^\star}=\bm I_{r^\star}$. Figure~\ref{fig:phase1_mode_orthogonality}
displays the absolute values of the entries of
$\bm G_{\Phi,r^\star}$. The $180$ unit-height blocks lie on the main diagonal
and represent the self-inner products of the retained modes.

\begin{figure}[!htbp]
\centering
\includegraphics[width=0.6\textwidth]{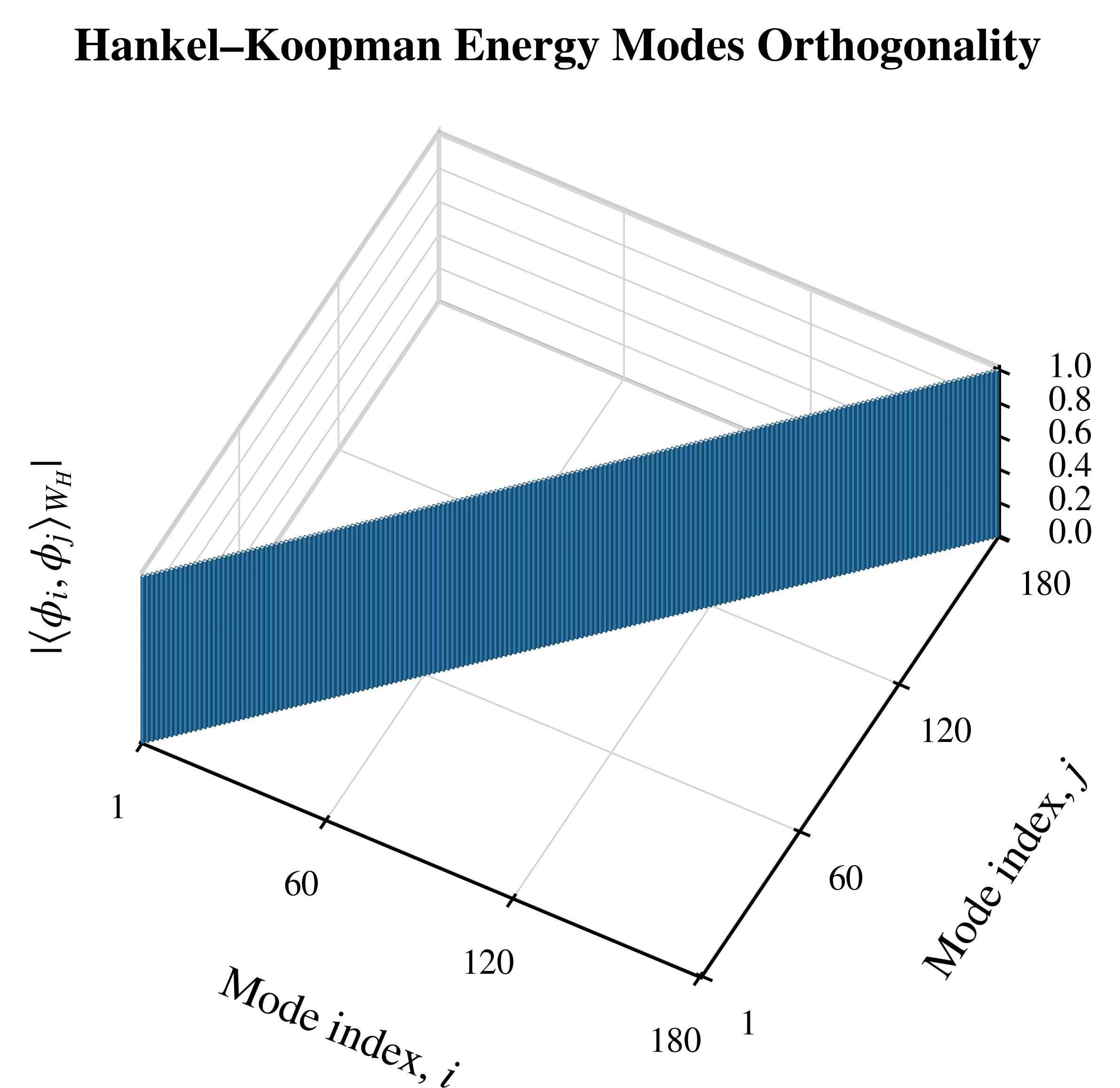}
\caption{Three-dimensional representation of the absolute Gram matrix
$|\bm\Phi_{r^\star}^{\mathrm H}\bm\Phi_{r^\star}|$ for the
$r^\star=180$ retained Hankel--Koopman energy modes. The unit-height diagonal
blocks represent
$|\langle\bm\phi_i,\bm\phi_i\rangle|=1$. The off-diagonal values
$|\langle\bm\phi_i,\bm\phi_j\rangle|$, $i\neq j$, are zero to
machine precision. The figure provides the numerical verification of
Proposition~\ref{prop:hkfed_mode_orthogonality}.}
\label{fig:phase1_mode_orthogonality}
\end{figure}

\begin{table}[!htbp]
\caption{Orthogonality diagnostics for the $r^\star=180$ retained
Hankel--Koopman energy modes.}
\label{tab:phase1_orthogonality}%
\begin{tabular*}{\textwidth}{@{\extracolsep\fill}ll}
\toprule
Diagnostic & Numerical value \\
\midrule
$\|\bm\Phi_{r^\star}^{\mathrm H}\bm\Phi_{r^\star}
      -\bm I_{r^\star}\|_F$ & $3.1037 \times 10^{-14}$ \\
$\|\bm\Phi_{r^\star}^{\mathrm H}\bm\Phi_{r^\star}
      -\bm I_{r^\star}\|_2$ &  $4.8395 \times 10^{-15}$\\
$\displaystyle\max_{i\neq j}|(\bm\Phi_{r^\star}^{\mathrm H}
      \bm\Phi_{r^\star})_{ij}|$ & $9.2981 \times 10^{-16}$\\
$\displaystyle\max_i |(\bm\Phi_{r^\star}^{\mathrm H}
      \bm\Phi_{r^\star})_{ii}-1|$ & $4.4409 \times 10^{-16}$ \\
\botrule
\end{tabular*}
\end{table}

The numerical diagnostics in Table~\ref{tab:phase1_orthogonality} quantify the
structure shown in Fig.~\ref{fig:phase1_mode_orthogonality}. The Frobenius and
spectral norms of
$\bm G_{\Phi,r^\star}-\bm I_{r^\star}$ are
$3.1037\times10^{-14}$ and $4.8395\times10^{-15}$, respectively. 
The largest absolute inner product between two distinct modes is
$9.2981\times10^{-16}$, and the maximum deviation of the modal
inner products $\bm\phi_j^{\mathrm H}\bm\phi_j$ from unity is
$4.4409\times10^{-16}$. These values verify the
orthonormality of the computed modes to machine precision and confirm
the numerical implementation of the projection in
Eq.~\eqref{eq:full_hkfed_projection}.

The selected Hankel reconstruction
$\widehat{\bm H}_{r^\star}=\bm\Phi_{r^\star}\bm C_{r^\star}$ was returned to
the four experimental channels using the uniform anti-diagonal recovery in
Eq.~\eqref{eq:uniform_antidiagonal_recovery}, inverse reshaping, and restoration
of the reference state in Eq.~\eqref{eq:recovered_experimental_channels}. For
channel $i$, the reconstruction was evaluated through the relative error,
root-mean-square error, mean absolute error, and Pearson correlation
coefficient (\ref{eq:finite_pearson_correlation}):

\begin{equation}
\begin{alignedat}{2}
e_{i,\mathrm{rel}}
&=
\frac{
    \left\|\bm Y(i,:)-\widehat{\bm Y}(i,:)\right\|_2
}{
    \left\|\bm Y(i,:)\right\|_2
},
&\qquad
\operatorname{RMSE}_i
&=
\frac{1}{\sqrt{N}}
\left\|\bm Y(i,:)-\widehat{\bm Y}(i,:)\right\|_2,
\\[1.5ex]
\operatorname{MAE}_i
&=
\frac{1}{N}
\left\|\bm Y(i,:)-\widehat{\bm Y}(i,:)\right\|_1,
&\qquad
R_i
&=
\varrho_N\!\left(
    \bm Y(i,:)^{\mathsf T},
    \widehat{\bm Y}(i,:)^{\mathsf T}
\right).
\end{alignedat}
\label{eq:numerical_phase1_channel_metrics}
\end{equation}
The quantities $e_{i,\mathrm{rel}}$ and $R_i$ are dimensionless. The RMSE and
MAE retain the physical unit of each measured channel. Table~\ref{tab:phase1_channel_accuracy} reports the accuracy metric values.

\begin{table}[!htbp]
\caption{Channelwise Phase~I HKFED reconstruction metrics over the
$N=461$ observation samples after uniform anti-diagonal recovery.}
\label{tab:phase1_channel_accuracy}%
\small
\begin{tabular*}{\textwidth}{@{\extracolsep\fill}lccccc}
\toprule
Channel & Unit & $e_{i,\mathrm{rel}}$ & $\operatorname{RMSE}_i$
    & $\operatorname{MAE}_i$ & $R_i$ \\
\midrule
Cloud cover & \% & 0.0648 & 5.7629
    & 2.6034 & 0.9821 \\
Ambient temperature at $2$ m & $^{\circ}$C & 0.1281
    & 0.7416 & 0.4350 & 0.9918 \\
Wind speed at $10$ m & m/s & 0.5975 & 1.2885
    & 0.4679 & 0.5777 \\
Relative humidity at $2$ m & \% & 0.0729 & 6.0580
    & 4.0726 & 0.9034 \\
\botrule
\end{tabular*}
\end{table}

\begin{figure}[!htbp]
\centering
\includegraphics[width=0.85\textwidth]{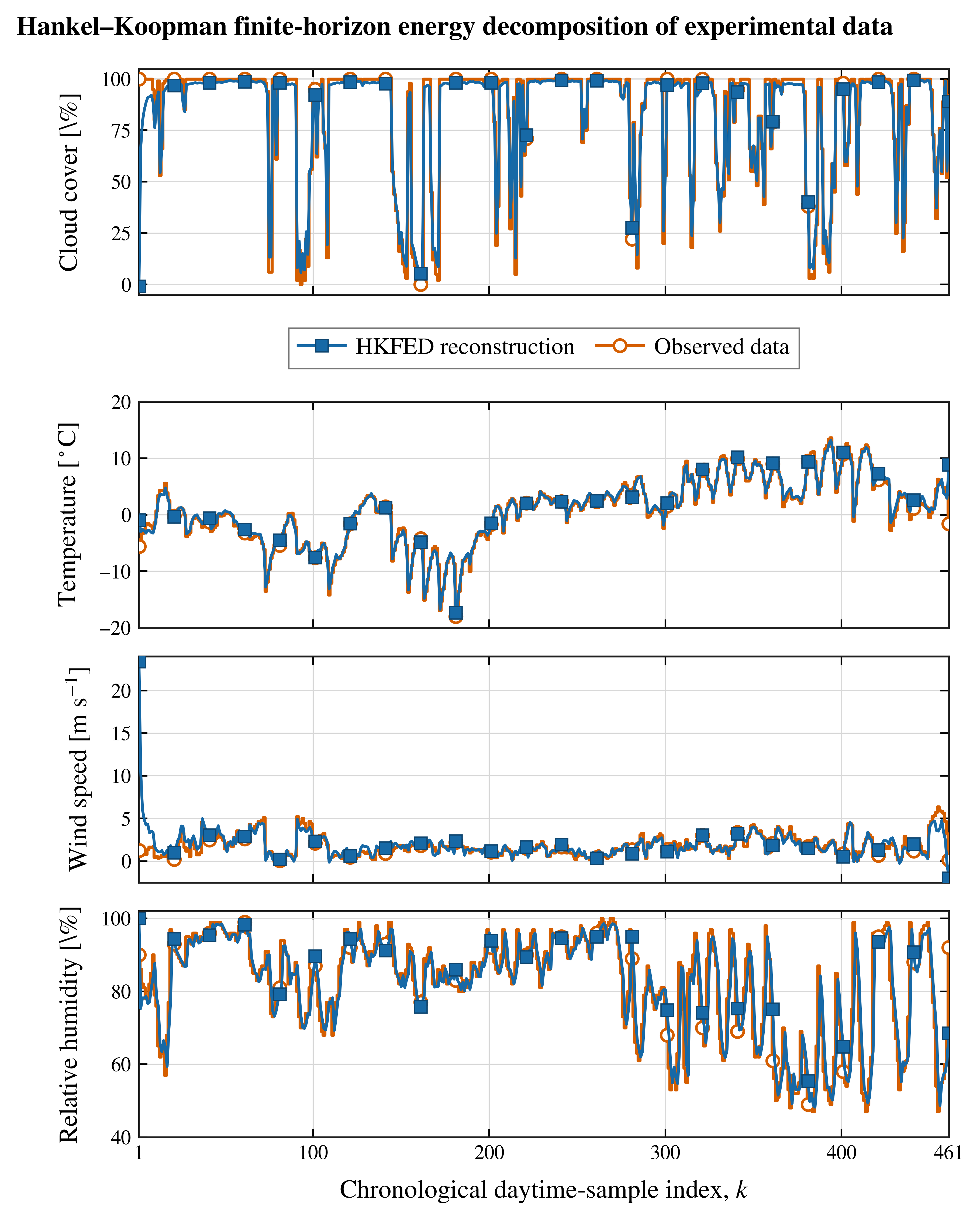}
\caption{Observed experimental channels and their Phase~I HKFED
reconstructions obtained from the selected candidate $\mathfrak T_{180}$ after
uniform anti-diagonal recovery. From top to bottom, the axes show cloud cover,
ambient temperature at $2$~m, wind speed at $10$~m, and relative humidity at
$2$~m. The curves with open circles represent the observed data, and the
blue curves with square markers represent the HKFED reconstructions. All four
axes use the chronological daytime-sample index $k=1,\ldots,461$.}
\label{fig:phase1_channel_reconstruction}
\end{figure}

Figure~\ref{fig:phase1_channel_reconstruction} illustrates the reconstruction in the
original measurement variables. For cloud cover,  the
reconstructed curve follows the high-cloud intervals and the abrupt reductions
present in the observed record. The
observed and reconstructed temperature curves remain closely aligned throughout
the finite modeling horizon.
For wind speed, Fig.~\ref{fig:phase1_channel_reconstruction} shows
localized reconstruction discrepancies at the endpoints of this channel and
the recovered low-amplitude variations within the record. Relative humidity
reconstructed curve follows the measured variations over the complete finite
modeling horizon.

The orthogonality diagnostics and the channelwise reconstruction results
complete the numerical assessment of the selected Phase~I candidate. The
matrix $\bm\Phi_{180}$ satisfies the orthogonality relation in
Eq.~\eqref{eq:hkfed_orthogonality} to machine precision, and
$\widehat{\bm Y}$ supplies the four recovered experimental channels used by
the subsequent phases.

\subsection{Phase II: Inverse-Calibrated Multi-Output NLARX Results}
\label{subsec:numerical_phase2}

Phase~II identifies the reduced evolution map
$\mathcal M_{r^\star}$ introduced in \eqref{eq:reduced_discrete_dynamics}, from the temporal coefficients of the
$r^\star=180$ Hankel--Koopman energy modes retained in Phase~I. The purpose
of this phase is  to construct a coupled
multi-output NLARX realization of the Hankel-Koopman model temporal coefficients.
Accordingly, each candidate is identified and simulated recursively in Hankel coefficient
space $\mathscr C_{\mathcal I}$, multiplied by the fixed modal matrix $\bm\Phi_{180}$, and assessed
after uniform anti-diagonal recovery in the original four measurement
variables space $\mathscr Y_{\mathcal I}$. 
In particular, Tikhonov and Pareto-based
procedures are evaluated on the prescribed finite calibration window, after the complete map
$\mathscr C_{\mathcal I}\rightarrow\mathscr H_{\mathcal I}
\rightarrow\mathscr Y_{\mathcal I}$.
This is the numerical realization of the inverse-calibration
principle introduced in Section~\ref{subsec:phase_nlarx}.

Algorithms~\ref{alg:nlarx_tikhonov_inverse_calibration} and
\ref{alg:nlarx_pareto_inverse_calibration} are tested on four finite
calibration windows $\mathcal T_H^i$, $i=1,\ldots,4$, of different start
times and lengths. Each window is contained in the global finite modeling
horizon $\mathcal T_{\mathrm{fin}}=[1,461]$.
 
Both independent recovery procedures were applied to the same finite family
$\mathfrak A_{\mathcal I}$ of admissible NLARX order triples. Algorithm~
\ref{alg:nlarx_tikhonov_inverse_calibration} selected a candidate by the
scalar inverse-calibration score $S_{\mathrm{Tik}}$, whereas Algorithm~
\ref{alg:nlarx_pareto_inverse_calibration} first determined the nondominated
set in the four-objective space and then applied the hierarchical decision
rule in Eq.~\eqref{eq:nlarx_pareto_structure_selection}. The numerical
settings common to the two searches are summarized in
Table~\ref{tab:phase2_parameters}.

\begin{table}[!htbp]
\caption{Phase~II inverse-calibrated multi-output NLARX parameters and
finite calibration windows.}
\label{tab:phase2_parameters}%
\begin{tabular*}{\textwidth}{@{\extracolsep\fill}lll}
\toprule
Quantity & Symbol & Value \\
\midrule
Selected Phase~I dimension & $r^\star$ & 180 \\
Number of experimental channels & $m$ & 4 \\
First calibration window & $\mathcal T_H^1$ & $[288,\;388]$\\
Second calibration window & $\mathcal T_H^2$ & $[360,\;460]$\\
Third calibration window & $\mathcal T_H^3$ & $[310,\;460]$\\
Fourth calibration window & $\mathcal T_H^4$ & $[300,\;450]$\\
Tested NLARX order triple & $\bm{\alpha}=(n_a,n_b,n_k)$ 
    & $(1:8,1:4,1:3)$ \\
Tikhonov regularization parameter & $\lambda_{\mathrm{reg}}$
    & $ 10^{-4}$ \\
Pareto normalized decision weights & $(w_1,w_2,w_3,w_4)$
    & $(0.3,0.5,0.1,0.1)$ \\
\botrule
\end{tabular*}
\end{table}

\subsubsection{Selected NLARX Structure and Agreement of the Two Procedures}
\label{subsubsec:numerical_phase2_selection}

For all four finite calibration windows $\mathcal T_H^i$, $i=1,\ldots,4$, the Tikhonov and Pareto-based procedures selected
the same admissible order triple,
\begin{equation}
    \bm\alpha_{\mathrm{Tik}}^\star
    =
    \bm\alpha_{\mathrm{Pareto}}^\star
    =
    \bm\alpha^\star
    =
    \bigl(8,1,1\bigr).
    \label{eq:numerical_phase2_common_structure}
\end{equation}
Because both algorithms compute the unique regularized matrix in
Eq.~\eqref{eq:nlarx_tikhonov_solution} for a fixed candidate, the common
structure in \eqref{eq:numerical_phase2_common_structure} yields, up to
working precision,
\begin{equation}
\begin{aligned}
    \bm B_{\mathrm{Tik}}
    &=\bm B_{\mathrm{Pareto}}=\bm B^\star,
    \\
    \widetilde{\bm C}_{\mathcal I,\mathrm{Tik}}
    &=\widetilde{\bm C}_{\mathcal I,\mathrm{Pareto}}
      =\widetilde{\bm C}_{\mathcal I}^\star,
    \\
    \widehat{\bm Y}_{\mathcal I,\mathrm{Tik}}
    &=\widehat{\bm Y}_{\mathcal I,\mathrm{Pareto}}
      =\widehat{\bm Y}_{\mathcal I}^\star.
\end{aligned}
\label{eq:numerical_phase2_equal_outputs}
\end{equation}

The equality in \eqref{eq:numerical_phase2_equal_outputs} is a result of this
numerical experiment, not an equality imposed by the theory. More precisely,
the same admissible candidate is the unique least element of both decision
orders: the order induced by the Tikhonov score and its hierarchical
minimum-complexity tie-break, and the order induced by Pareto nondominance,
$\Psi_{\mathcal I}$, and the hierarchical criteria $q_1,\ldots,q_8$. As
stated in the Remark~\ref{Distinct_procedures}, the two procedures may select
different unique models for other data sets or candidate families. 

Table~\ref{tab:phase2_selection_results} reports the values associated with
the common selected candidate while retaining the distinct decision
quantities of the two procedures on the first calibration window $\mathcal T_H^1$.

\begin{table}[!htbp]
\caption{Phase~II NLARX model-selection results for the independent Tikhonov and
Pareto-based recovery procedures on the first calibration window $\mathcal T_H^1$.}
\label{tab:phase2_selection_results}%
\small
\begin{tabular*}{\textwidth}{@{\extracolsep\fill}lccc}
\toprule
Reported quantity & Symbol & Tikhonov & Pareto based \\
\midrule
Selected order triple & $\bm\alpha^\star$
    & $\bigl(8,1,1\bigr)$ & $\bigl(8,1,1\bigr)$ \\
Required initial history & $\ell_{\bm\alpha^\star}$
    & 8 & 8 \\
Number of nonlinear features & $p_{\bm\alpha^\star}$
    & 2881 & 2881 \\
Parameter-matrix norm & $\|\bm B^\star\|_{\mathrm F}$
    & 5.1481 & 5.1481 \\
Tikhonov selection score & $S_{\mathrm{Tik}}$
    & $8.0797 \times 10^{-3}$  & -- \\
Relative reconstruction & $f_1$
    & -- & 0.2708 \\
Mean correlation loss & $f_2$
    & -- & 0.0588\\
Residual autocorrelation & $f_3$
    & -- & 0.5702 \\
Parameter penalty & $f_4$
    & -- & 0.8373 \\
Pareto decision score & $\Psi_{\mathcal I}$
    & -- & 0.1395 \\
Number of Pareto-optimal candidates & $|\mathfrak P_{\mathcal I}|$
    & -- & 83 \\
Phase~II execution time & --
    & 18.3363 s & 18.4773 s \\
\botrule
\end{tabular*}
\end{table}

\subsubsection{Pareto Front in the Objective Space}
\label{subsubsec:numerical_phase2_pareto}

The Pareto-based procedure evaluates every admissible structure through the
four minimization objectives in Eq.~\eqref{eq:nlarx_pareto_objectives}:
relative reconstruction error $f_1$, mean channel-correlation loss $f_2$,
mean absolute lag-one residual correlation $f_3$, and normalized parameter
penalty $f_4$.

 The objectives are grouped into the paired Euclidean quantities
\begin{equation}
P_{12}(\bm\alpha)
=
\left(
f_1(\bm\alpha)^2+f_2(\bm\alpha)^2
\right)^{1/2},
\qquad
P_{34}(\bm\alpha)
=
\left(
f_3(\bm\alpha)^2+f_4(\bm\alpha)^2
\right)^{1/2}.
\label{eq:numerical_phase2_paired_objectives}
\end{equation}
The quantity $P_{12}$ combines relative reconstruction error and mean
channel-correlation loss, whereas $P_{34}$ combines mean absolute lag-one
residual correlation and normalized parameter penalty. These paired
quantities are introduced as a visualization technique. Pareto dominance,
objective normalization, and final model selection remain defined in the
original four-objective space
$\bm f_{\mathcal I}
=
\begin{bmatrix}
f_1 & f_2 & f_3 & f_4
\end{bmatrix}^{\mathsf T}.
$

\begin{figure}[!htbp]
\centering
\includegraphics[width=0.9\textwidth]
{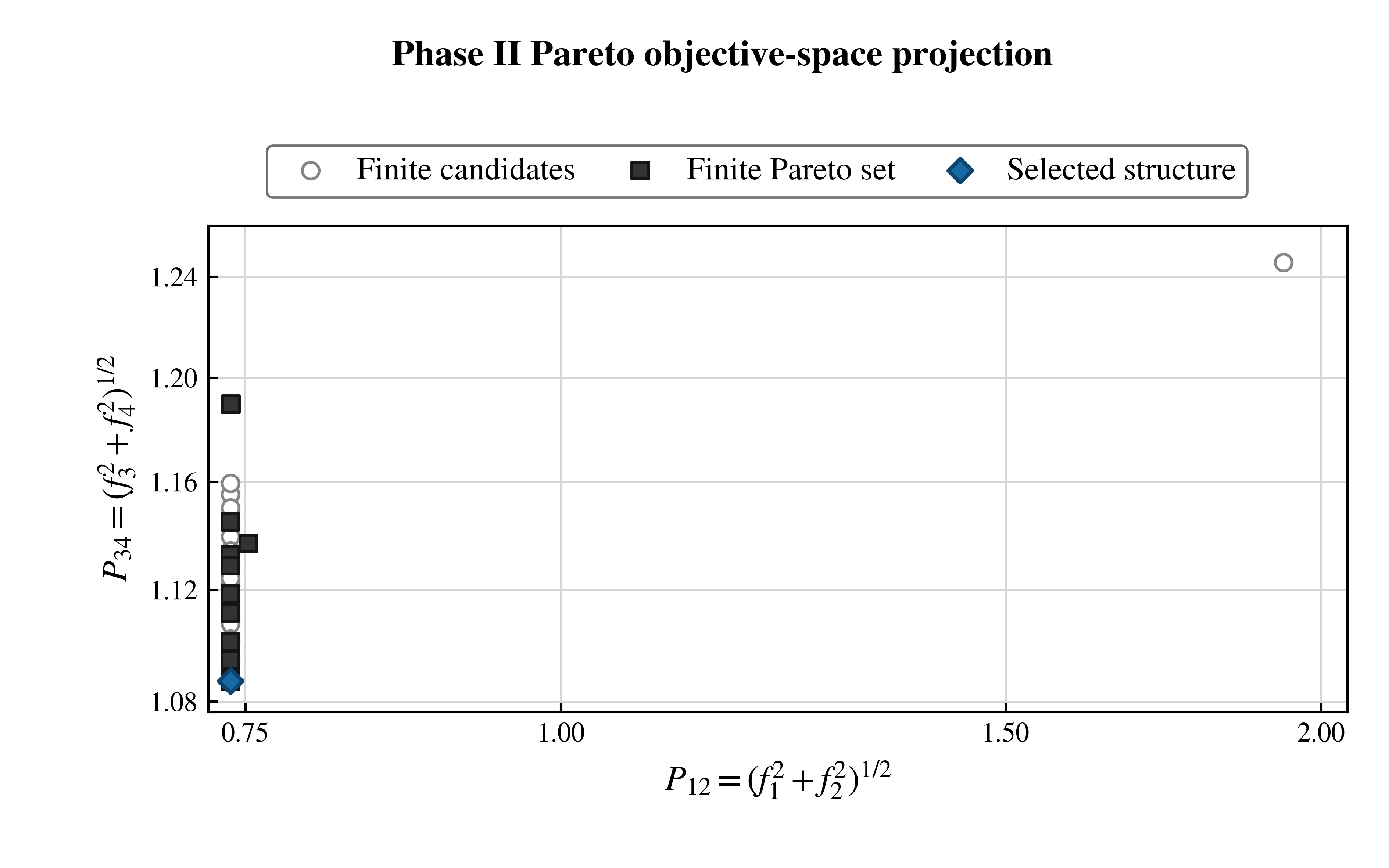}
\caption{Paired-objective representation of the Phase~II
multi-objective model-selection problem. The horizontal coordinate
$P_{12}=(f_1^2+f_2^2)^{1/2}$ combines relative reconstruction error and
mean channel-correlation loss, whereas the vertical coordinate
$P_{34}=(f_3^2+f_4^2)^{1/2}$ combines mean absolute lag-one residual
correlation and normalized parameter penalty. Open circles denote
candidates having finite values of all four objectives, black squares
identify the finite Pareto-optimal subset, and the blue diamond marks
the structure selected by the normalized decision score
$\Psi_{\mathcal I}$ and the hierarchical rule in
Eq.~\eqref{eq:nlarx_pareto_structure_selection}.}
\label{fig:phase2_pareto_objective_space}
\end{figure}

Figure~\ref{fig:phase2_pareto_objective_space} makes the compromise between
measurement-space accuracy and residual regularity--model complexity directly
visible. The finite Pareto-optimal  structure is
located at
$\bigl(P_{12},P_{34}\bigr)\approx(0.74002,1.08732)$; it attains the smallest
$P_{34}$ among the displayed Pareto-optimal candidates while maintaining
$P_{12}$ essentially at its minimum. 


\subsubsection{Measurement-Space Reconstruction of Channels}
\label{subsubsec:numerical_phase2_reco}

Theorem~\ref{thm:nlarx_existence_uniqueness_inverse_stability} established the existence, uniqueness and inverse stability
of the models returned by the two implemented procedures. This subsection
provides their measurement-space numerical realization for all four finite
calibration windows $\mathcal T_H^i$, $i=1,\ldots,4$ (see
Table~\ref{tab:phase2_parameters}).

The selected coefficient simulator is advanced recursively in the Hankel
coefficient space associated with the prescribed calibration window, and its trajectory is mapped through the
selected Hankel--Koopman energy modes and the anti-diagonal recovery operator
to obtain the four reconstructed physical experimental channels.

For both Tikhonov/Pareto-based inverse-calibrated NLARX reconstruction, the
physical reconstruction accuracy of
channel $i$ was quantified by the relative error and Pearson correlation
coefficient (\ref{eq:finite_pearson_correlation}):
\begin{equation}
\begin{aligned}
    e_{i,\mathrm{rel}}
    &=
    \frac{
       \left\|\bm Y_{\mathcal I}(i,:)
       -\widehat{\bm Y}_{\mathcal I,\rho}(i,:)\right\|_2
    }{
       \left\|\bm Y_{\mathcal I}(i,:)\right\|_2
    },
    \qquad
    R_i
    &=
    \varrho_{N_{\mathcal I}}\!\left(
        \bm Y_{\mathcal I}(i,:)^{\mathsf T},
        \widehat{\bm Y}_{\mathcal I,\rho}(i,:)^{\mathsf T}
    \right).
\end{aligned}
\label{eq:numerical_phase2_channel_metrics}
\end{equation}
The two quantities are dimensionless and use the complete finite calibration
window, including the prescribed initial-history samples. Their channelwise
values are reported in Table~\ref{tab:phase2_channel_accuracy} for every considered calibration window $\mathcal T_H^i$, $i=1,\ldots,4$.

\begin{table}[!htbp]
\caption{Channelwise Phase~II inverse-calibrated reconstruction
metrics over the four calibration windows $\mathcal{T}_H^i$, $i=1,\ldots,4$.}
\label{tab:phase2_channel_accuracy}%
\small
\begin{tabular*}{\textwidth}{@{\extracolsep\fill}llcccc}
\toprule
& & \multicolumn{4}{c}{Tikhonov/Pareto-based} \\
\cmidrule(lr){3-6}
Channel & Metric
    & $\mathcal{T}_H^1$ & $\mathcal{T}_H^2$
    & $\mathcal{T}_H^3$ & $\mathcal{T}_H^4$ \\
\midrule
\multirow{2}{*}{Cloud cover}
    & $e_{i,\mathrm{rel}}$
    & $0.0285$
    & $0.0293$
    & $0.0286$
    & $0.0280$ \\
    & $R_i$
    & 0.9998 & 0.9995 & 0.9996 & 0.9996 \\
\addlinespace

\multirow{2}{*}{\shortstack[l]{Ambient temperature\\at $2$ m}}
    & $e_{i,\mathrm{rel}}$
    & $0.0682$
    & $0.1016$
    & $0.0872$
    & $0.0818$ \\
    & $R_i$
    & 0.9944 & 0.9896 & 0.9901 & 0.9935 \\
\addlinespace

\multirow{2}{*}{\shortstack[l]{Wind speed\\at $10$ m}}
    & $e_{i,\mathrm{rel}}$
    & $0.1489$
    & $0.2862$
    & $0.25102$
    & $0.1799$ \\
    & $R_i$
    & 0.9353 & 0.8738 & 0.8826 & 0.9234 \\
\addlinespace

\multirow{2}{*}{\shortstack[l]{Relative humidity\\at $2$ m}}
    & $e_{i,\mathrm{rel}}$
    & $0.1091$
    & $0.1158$
    & $0.1127$
    & $0.1147$ \\
    & $R_i$
    & 0.8350 & 0.8798 & 0.8633 & 0.8566 \\
\botrule
\end{tabular*}
\end{table}

The consistently small relative errors and high correlation coefficients across all four calibration windows demonstrate the accuracy and robustness of the inverse-calibrated reconstruction, with particularly strong agreement for cloud cover and ambient temperature and satisfactory performance for the more variable wind-speed and relative-humidity channels.

The four finite calibration windows $\mathcal T_H^i$, $i=1,\ldots,4$,
were selected for the subsequent application of the
finite-horizon forecasting procedure developed in Phase~III. Accordingly, the
physical reconstruction has a direct computational role in the end-to-end
methodology: the
 common Tikhonov/Pareto reconstruction
 $\widehat{\bm Y}_{\mathcal I}^{\star}$  
(\ref{eq:numerical_phase2_equal_outputs}), supplies the
channel data input required by the Phase~III quantity-of-interest model, in
accordance with Eq.~\eqref{eq:phase3_phase2_measurement_input}.

For the present experimental data, the Tikhonov-based and Pareto-based
procedures recover the same admissible NLARX candidate as the unique least
element of their respective decision orders. Consequently, their simulated
coefficient trajectories and reconstructed physical-channel trajectories
coincide at every corresponding sample. The
blue solid curves and the green dashed curves in Figures \ref{fig:phase2_nlarx_simulation_residuals_TH1}-\ref{fig:phase2_nlarx_simulation_residuals_TH4}
represent these coincident solutions.
This equality is a numerical property of
the selected candidate for the present data, while the two recovery procedures
retain their independent definitions in the theoretical formulation.

\begin{figure}[!htbp]
    \centering
    \includegraphics[
        width=\textwidth,
        height=0.90\textheight,
        keepaspectratio
    ]{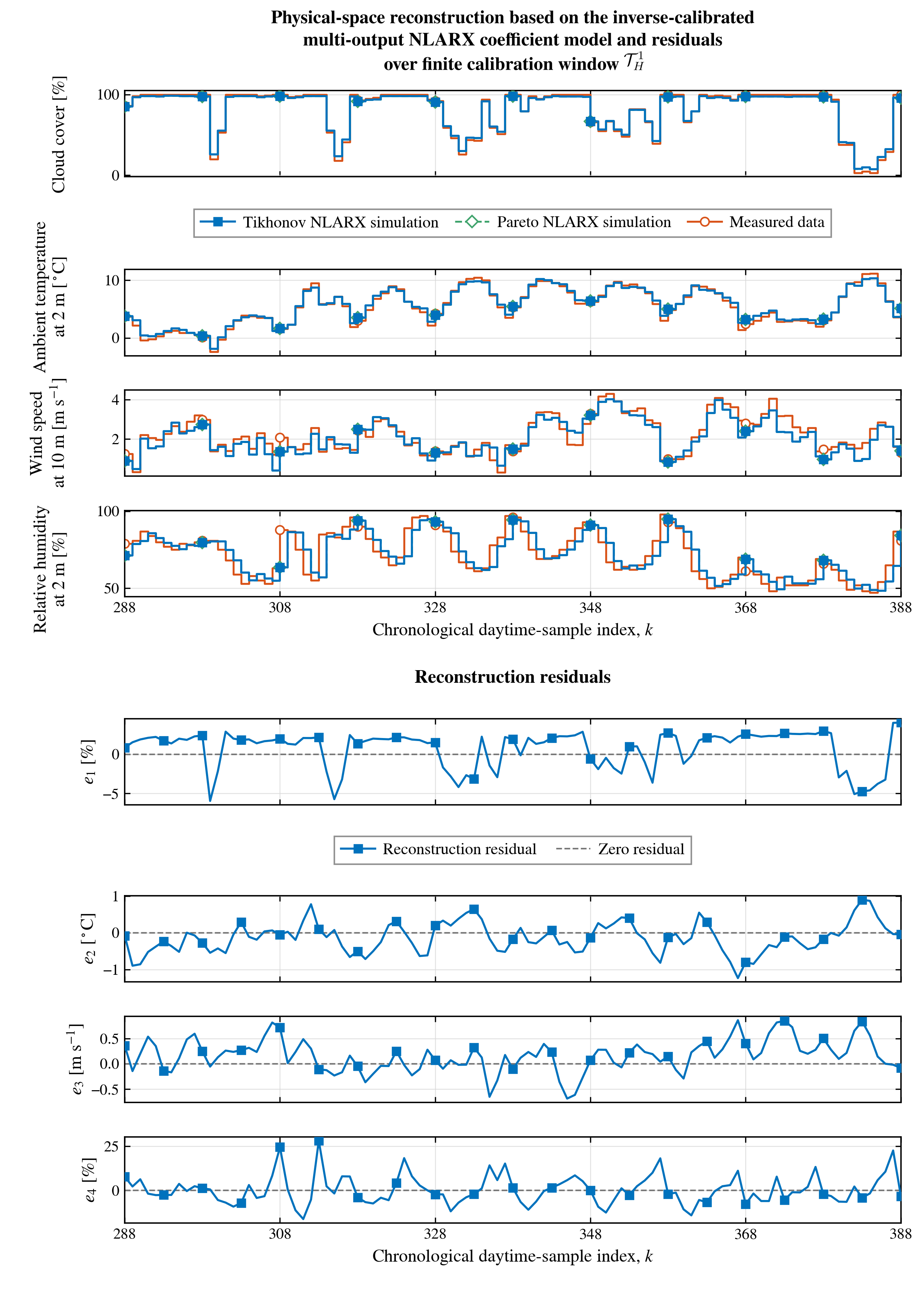}
    \caption{Physical-space reconstruction based on the inverse-calibrated
    multi-output NLARX coefficient model, and corresponding channelwise residuals over the
    finite calibration window $\mathcal{T}_H^1=[288,\;388]$,
    followed by their respective residuals.
    The Tikhonov-based and Pareto-based trajectories coincide because both
    procedures select the same admissible NLARX candidate for the present
    experimental data.}
    \label{fig:phase2_nlarx_simulation_residuals_TH1}
\end{figure}
\begin{figure}[!htbp]
    \centering
    \includegraphics[
        width=\textwidth,
        height=0.90\textheight,
        keepaspectratio
    ]{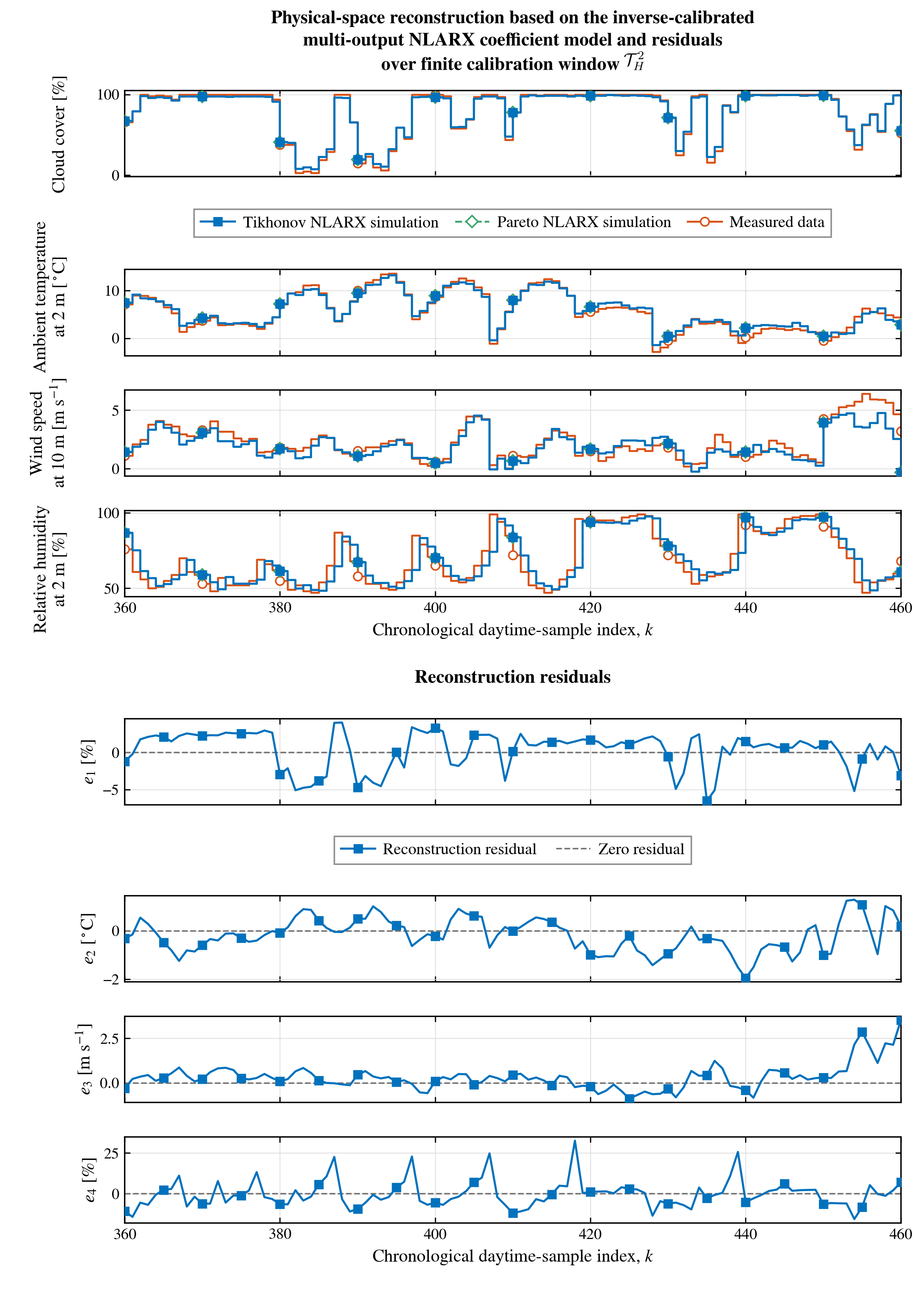}
    \caption{Physical-space reconstruction based on the inverse-calibrated
    multi-output NLARX coefficient model, and corresponding channelwise residuals over the
    finite calibration window $\mathcal{T}_H^2=[360,\;460]$,
    followed by their respective residuals.
    The Tikhonov-based and Pareto-based trajectories coincide because both
    procedures select the same admissible NLARX candidate for the present
    experimental data.}
    \label{fig:phase2_nlarx_simulation_residuals_TH2}
\end{figure}
\begin{figure}[!htbp]
    \centering
    \includegraphics[
        width=\textwidth,
        height=0.90\textheight,
        keepaspectratio
    ]{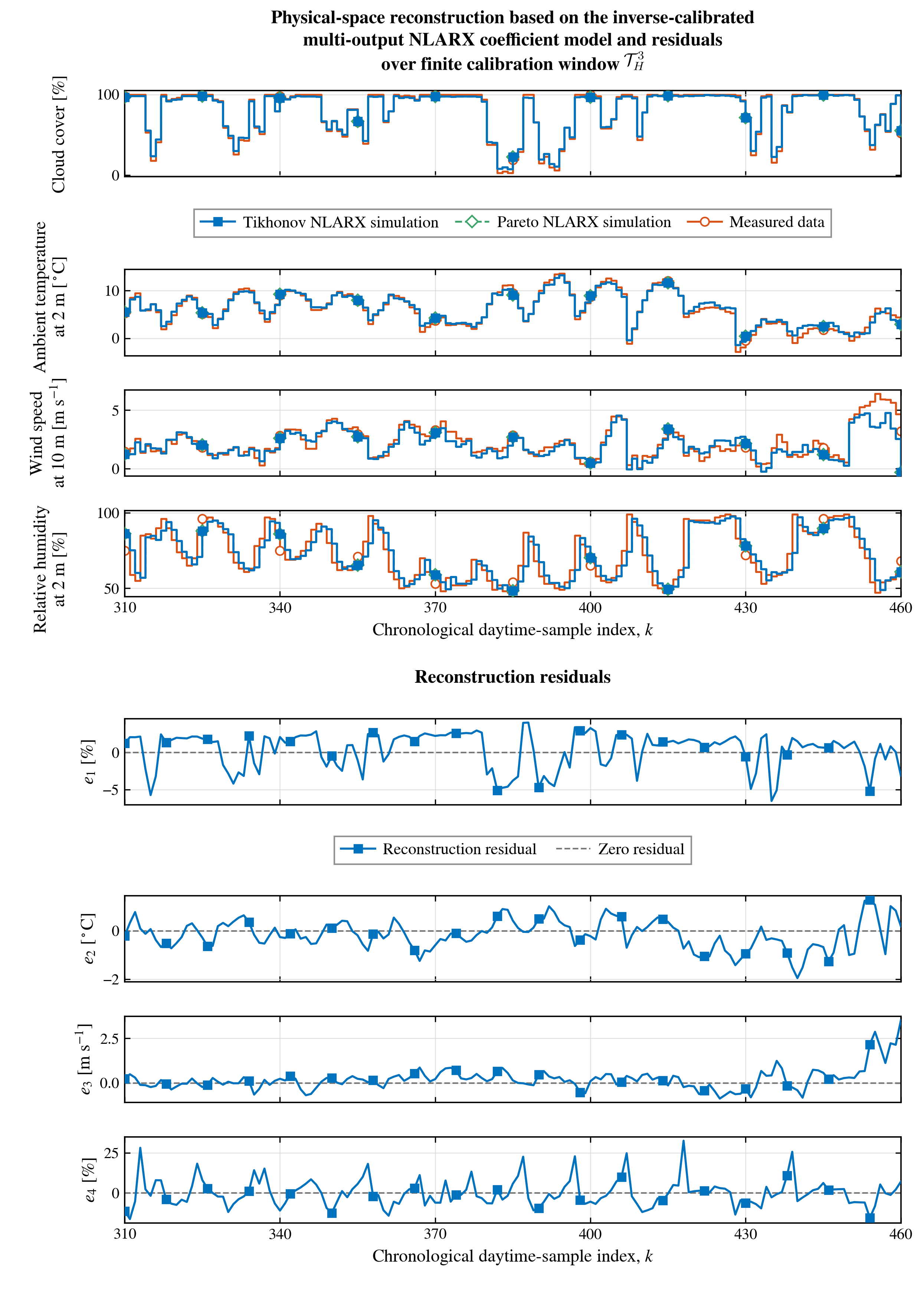}
    \caption{Physical-space reconstruction based on the inverse-calibrated
    multi-output NLARX coefficient model, and corresponding channelwise residuals over the
    finite calibration window $\mathcal{T}_H^3=[310,\;460]$,
    followed by their respective residuals.
    The Tikhonov-based and Pareto-based trajectories coincide because both
    procedures select the same admissible NLARX candidate for the present
    experimental data.}
    \label{fig:phase2_nlarx_simulation_residuals_TH3}
\end{figure}
\begin{figure}[!htbp]
    \centering
    \includegraphics[
        width=\textwidth,
        height=0.90\textheight,
        keepaspectratio
    ]{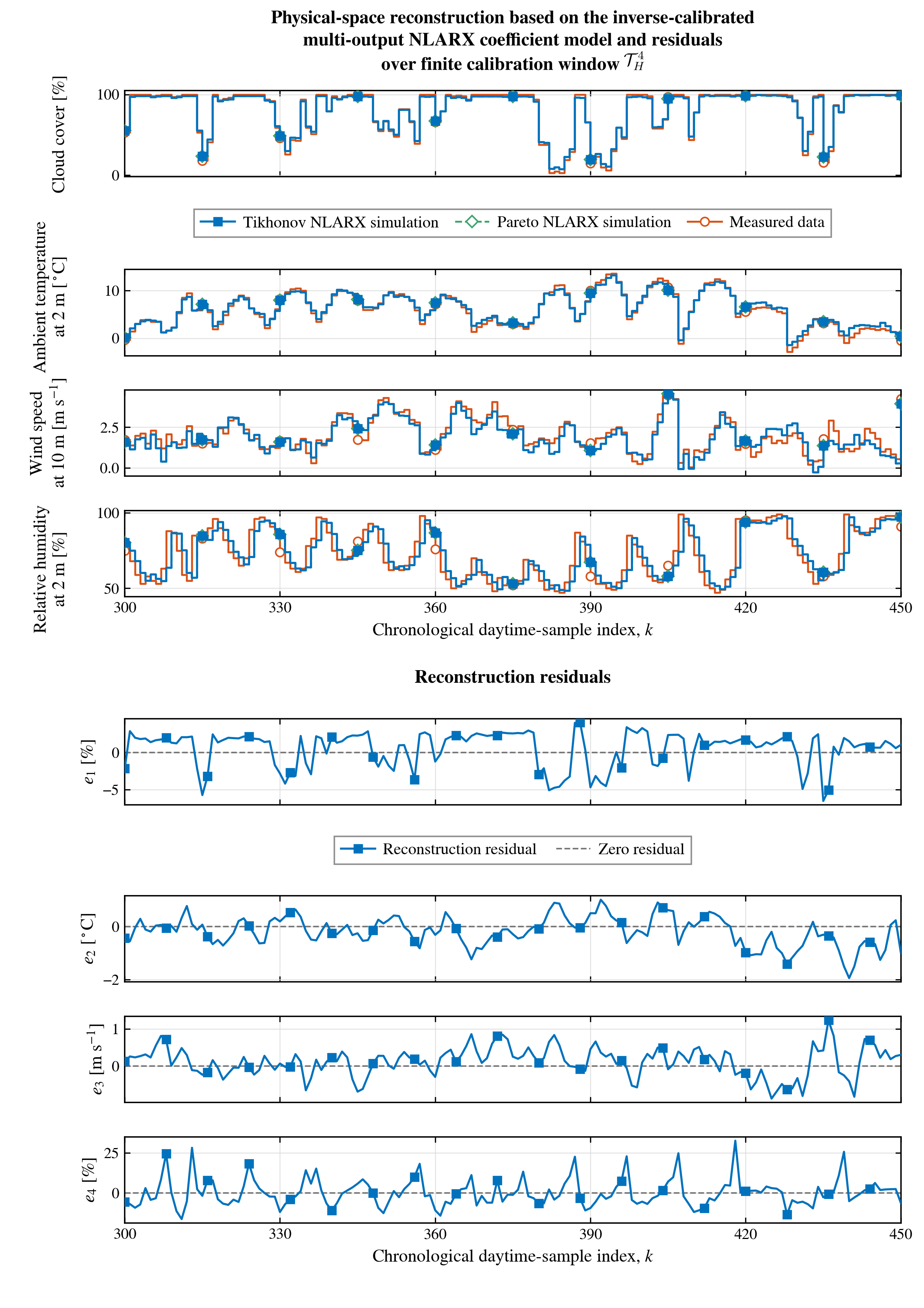}
    \caption{Physical-space reconstruction based on the inverse-calibrated
    multi-output NLARX coefficient model, and corresponding channelwise residuals over the
    finite calibration window $\mathcal{T}_H^4=[300,\;450]$,
    followed by their respective residuals.
    The Tikhonov-based and Pareto-based trajectories coincide because both
    procedures select the same admissible NLARX candidate for the present
    experimental data.}
    \label{fig:phase2_nlarx_simulation_residuals_TH4}
\end{figure}

Thus, Phase~II produces a theoretically supported and numerically satisfactory
finite-window coefficient-space simulation together with its physical-space
reconstruction of the experimental channels. These reconstructed channels
constitute the input layer of the forecasting procedure. The following
section applies the Phase~III procedure to these channel data, and generates a recursive
prediction of a channels-dependent quantity-of-interest over the prescribed finite forecast horizon.

\subsection{Phase III: Finite-Horizon Recursive Forecasting Results}
\label{subsec:numerical_phase3}

This section presents the numerical realization of the Phase~III forecasting
theory developed in Section~\ref{subsec:phase_forecasting}. Its scope comprises
the construction of the quantity-of-interest
$Q_{\mathrm I}$ from the four experimental channels, the correlation-primary
identification of a scalar NLARX model on four finite observation horizons $\mathcal T_{\mathrm{obs}}^i$,
and the recursive evaluation of four finite-horizon forecasts $\mathcal T_{\mathrm{for}}^i,\;i=1,...,4$. 
In each experiment, the quantity-of-interest model is identified from  experimental four-channel data on
$\mathcal T_{\mathrm{obs}}^i$. 
On every $\mathcal T_{\mathrm{for}}^i \subseteq \mathcal T_H^i$, the selected
Phase~II multi-output NLARX model simulates the retained temporal coefficients
in the Hankel coefficient space associated with $\mathcal T_H^i$. Their
inverse reconstruction supplies the physical cloud-cover, temperature,
wind-speed, and relative-humidity trajectories that serve as the exogenous
drivers of the recursive power forecast. Forecast accuracy is assessed through
the Pearson correlation coefficient, the relative error, and the
identification-to-forecast ratio in
Eq.~\eqref{eq:phase3_calibration_forecast_ratio}. The numerical procedure
implements the finite operator composition in
Eq.~\eqref{eq:phase3_numerical_operator_composition}, whose  construction
is specified in Definition~\ref{def:phase3_numerical_forecasting}.

\subsubsection{Solar-Plant Power as the Quantity-of-Interest}
\label{subsubsec:numerical_phase3_qoi}

The channel-dependent quantity $Q_{\mathrm I}$ in
Definition~\ref{def:phase3_quantity_of_interest} is specified here as the
normalized solar-plant output power. The channel order follows
Eq.~\eqref{eq:numerical_phase1_data_matrix}: cloud cover, ambient temperature,
wind speed, and relative humidity. 
At the $k$th hourly sample, the quantity-of-interest is expressed as
\begin{equation}
\begin{aligned}
Q_{\mathrm I}(\bm y_k)
={}&P_{\max}
\left[1-\alpha_c(\bm y_k)_1\right]
\left[1-\gamma_T\bigl((\bm y_k)_2-T_{\mathrm{ref}}\bigr)\right]
\\
&\times
\left[1+\gamma_w(\bm y_k)_3\right]
\left[1-\gamma_h(\bm y_k)_4\right].
\end{aligned}
\label{eq:numerical_phase3_solar_power}
\end{equation}
The parameter values defining the four channel-dependent factors in
Eq.~\eqref{eq:numerical_phase3_solar_power} are collected in
Table~\ref{tab:phase3_power_functional_parameters}.

\begin{table}[!htbp]
\caption{Parameters of the normalized solar-plant power quantity
$Q_{\mathrm I}$.}
\label{tab:phase3_power_functional_parameters}%
\small
\begin{tabular*}{\textwidth}{@{\extracolsep\fill}lccc}
\toprule
Quantity & Parameter & Value & Unit \\
\midrule
Normalized reference power & $P_{\max}$ & $1$ & dimensionless \\
Cloud attenuation coefficient & $\alpha_c$ & $0.85$ & dimensionless \\
Reference temperature & $T_{\mathrm{ref}}$ & $25$ & $^{\circ}\mathrm C$ \\
Temperature coefficient & $\gamma_T$ & $0.004$ & $^{\circ}\mathrm C^{-1}$ \\
Wind-cooling coefficient & $\gamma_w$ & $0.015$ & $\mathrm{s\,m^{-1}}$ \\
Humidity attenuation coefficient & $\gamma_h$ & $0.20$ & dimensionless \\
\botrule
\end{tabular*}
\end{table}

The four multiplicative factors represent cloud attenuation,
temperature-dependent conversion, wind cooling, and humidity attenuation,
respectively. The sampled quantity in Eq.~\eqref{eq:phase3_qoi_samples} is
$q_k=Q_{\mathrm I}(\bm y_k)$, with $\xi_k=0$ for the constructed power data.
The past samples $q_k$ provide the autoregressive power history. The 
experimentally measured channel vectors $\bm u_k$ convey the exogenous
information used to identify the power NLARX model on
$\mathcal T_{\mathrm{obs}}^i$. During forecasting on
$\mathcal T_{\mathrm{for}}^i$, the four-channel vectors obtained by physical
inverse reconstruction of the coefficient trajectories simulated by the
selected Phase~II multi-output NLARX model provide the exogenous inputs. The
two data regimes therefore contribute differently to the
quantity-of-interest model in
Eq.~\eqref{eq:phase3_qoi_nlarx_model} for the present solar-power application.

\subsubsection{Correlation-Primary Identification of the $Q_{\mathrm I}$ NLARX Model}
\label{subsubsec:numerical_phase3_identification}

A separate scalar NLARX model was identified for quantity-of-interest $Q_{\mathrm I}$ on each of
the four finite observation horizons $\mathcal T_{\mathrm{obs}}^i$. In each
case, $N_Q$ is the number of paired experimental power--channel samples on
that observation horizon. For each experiment, the Phase~II calibration
window $\mathcal T_H^i$ begins at the first sampling instant following
$\mathcal T_{\mathrm{obs}}^i$, while its length is prescribed independently
by its terminal index. The admissible structure family
$\mathfrak B_Q$ introduced in the theoretical formulation was
\begin{equation}
    \mathfrak B_Q
    =
    \left\{
      (n_a^Q,n_b^Q,n_k^Q):
      n_a^Q\in\{1,\ldots,12\},\;
      n_b^Q\in\{1,\ldots,6\},\;
      n_k^Q\in\{0,\ldots,3\}
    \right\}.
    \label{eq:numerical_phase3_candidate_family}
\end{equation}
The ridge parameter was $\lambda_Q=10^{-6}$, and coefficients with magnitude below
$\tau_Q=10^{-8}$ were assigned a zero value. Every feasible model was
evaluated by the four objectives in
Eq.~\eqref{eq:phase3_qoi_pareto_objectives}: temporal-shape correlation loss
$f_1^Q$, relative amplitude error $f_2^Q$, residual persistence $f_3^Q$, and
parameter magnitude $f_4^Q$. Pareto filtering was followed by minimization of
the correlation-primary score $S_Q$ in
Eq.~\eqref{eq:phase3_qoi_final_score}. This decision procedure assigns the
leading role to temporal-shape correlation and retains the contribution of
relative amplitude error, residual persistence, and parameter magnitude.
Table~\ref{tab:phase3_selected_power_models} reports the experiment-specific
observation and forecast settings together with the identification fields for
the four quantity-of-interest NLARX models. 

\begin{table}[!htbp]
\caption{Experiment-specific Phase~III  settings and selected
correlation-primary NLARX results for the solar-plant power quantity of
interest $Q_{\mathrm I}$.}
\label{tab:phase3_selected_power_models}%
\begingroup
\scriptsize
\setlength{\tabcolsep}{0.5pt}
\begin{tabular*}{\textwidth}{@{\extracolsep\fill}lccccc}
\toprule
Reported quantity & Symbol & $\mathcal T_{\mathrm{obs}}^1$
& $\mathcal T_{\mathrm{obs}}^2$ & $\mathcal T_{\mathrm{obs}}^3$
& $\mathcal T_{\mathrm{obs}}^4$ \\
\midrule
Finite observation horizon
& $\mathcal T_{\mathrm{obs}}^i$
& $[1,287]$ & $[1,359]$ & $[1,309]$ & $[1,299]$ \\
Paired identification samples
& $N_Q$ & 287 & 359 & 309 & 299 \\
Finite forecast horizon
& $\mathcal T_{\mathrm{for}}^i$
& $[288,335]$ & $[360,431]$ & $[310,409]$ & $[300,449]$ \\
Phase~II calibration window
& $\mathcal T_H^i$
& $[288,388]$ & $[360,460]$ & $[310,460]$ & $[300,450]$ \\
Forecast samples
& $N_{\mathrm f}$ & 48 & 72 & 100 & 150 \\
Selected order triple
& $\bm\beta_Q^\star$
& $(7,2,0)$ & $(12,2,0)$ & $(9,2,0)$ & $(12,4,0)$ \\
Correlation loss
& $f_1^Q(\bm\beta_Q^\star)$
& $3.1641\times 10^{-14}$ & $1.3212\times 10^{-14}$ & $2.498\times 10^{-14}$ & $4.5408\times 10^{-14}$ \\
Relative amplitude error
& $f_2^Q(\bm\beta_Q^\star)$
& $1.7339\times 10^{-7}$ & $1.0956\times 10^{-7}$ & $1.5385\times 10^{-7}$ & $2.0868\times 106{-7}$ \\
Residual persistence
& $f_3^Q(\bm\beta_Q^\star)$
& $0.0022$ & $4.0479\times 10^{-3}$ & $3.8305\times 10^{-3}$ & $4.5837\times 10^{-3}$ \\
Parameter magnitude
& $f_4^Q(\bm\beta_Q^\star)$
& $0.4892$ & $0.4910$ & $0.4900$ & $0.4896$ \\
Final decision score
& $S_Q(\bm\beta_Q^\star)$
& $5.003\times 10^{-3}$ & $5.113\times 10^{-3}$ & $5.092\times 10^{-3}$ & $5.1255\times 10^{-3}$ \\
\botrule
\end{tabular*}
\endgroup
\end{table}

\subsubsection{Finite-Horizon Forecasts of Quantity-of-Interest }
\label{subsubsec:numerical_phase3_forecasts}

Four forecasting experiments were conducted with forecast lengths
$N_{\mathrm f}=48$, $72$, $100$, and $150$ hourly samples, respectively. Here, $N_Q$ is
the number of paired experimental power--channel samples used to identify the
scalar power NLARX model on $\mathcal T_{\mathrm{obs}}^i$, whereas
$N_{\mathrm f}$ is the number of recursively generated power samples, and
equivalently the number of composed one-step operators, on
$\mathcal T_{\mathrm{for}}^i$. Each experiment finite windows reported in
Table~\ref{tab:phase3_selected_power_models},  satisfy
\begin{equation}
\begin{gathered}
\mathcal T_{\mathrm{obs}}^i\cup\mathcal T_{\mathrm{for}}^i
\subseteq\mathcal T_{\mathrm{fin}},
\qquad
\mathcal T_H^i\subseteq\mathcal T_{\mathrm{fin}},
\qquad
\mathcal T_{\mathrm{for}}^i\subseteq\mathcal T_H^i,\\
\mathcal T_{\mathrm{obs}}^i\cap\mathcal T_H^i=\varnothing,
\qquad
\max\mathcal T_{\mathrm{obs}}^i<\min\mathcal T_H^i,
\qquad i=1,\ldots,4.
\end{gathered}
\label{eq:numerical_temporal_set_relations}
\end{equation}

For each experiment, the selected model
$(\bm\beta_Q^\star,\bm\theta_Q^\star)$ is identified from experimental power
samples $q_k$ paired with experimental four-channel vectors $\bm u_k$ on
$\mathcal T_{\mathrm{obs}}^i$. On $\mathcal T_{\mathrm{for}}^i$, the selected
Phase~II multi-output NLARX model recursively simulates the retained temporal
coefficients in the Hankel coefficient space associated with
$\mathcal T_H^i$. The inverse reconstruction using Koopman energy modes produces the four physical
channel trajectories that form $\widehat{\bm U}_{\mathrm{for}}$ in
Eq.~\eqref{eq:phase3_future_drivers} and drive the recursive power forecast in
Eq.~\eqref{eq:phase3_qoi_recursive_forecast}.

 Crucially, the finite forecast
horizons are contained in the Phase~II calibration windows reported in
Table~\ref{tab:phase3_selected_power_models}:
$\mathcal T_{\mathrm{for}}^1\subseteq\mathcal T_H^1=[288,388]$,
$\mathcal T_{\mathrm{for}}^2\subseteq\mathcal T_H^2=[360,460]$,
$\mathcal T_{\mathrm{for}}^3\subseteq\mathcal T_H^3=[310,460]$, and
$\mathcal T_{\mathrm{for}}^4\subseteq\mathcal T_H^4=[300,450]$.
This deliberate inclusion is a key coupling mechanism and innovation of the
unified framework: Phase~II transfers the coefficient dynamics identified in
the Hankel space associated with $\mathcal T_H^i$ into physically
reconstructed channel drivers on
$\mathcal T_{\mathrm{for}}^i\subseteq\mathcal T_H^i$, and Phase~III converts
these drivers into a forecast of the quantity-of-interest.

For each experiment, temporal agreement is quantified by the Pearson
correlation coefficient $R_Q^{(\mathrm{for})}$. With the measured and
forecasted power samples in the corresponding finite forecast window, this
coefficient is
\begin{equation}
\begin{aligned}
R_Q^{(\mathrm{for})}
&=
\frac{
\displaystyle\sum_{h=1}^{N_{\mathrm f}}
\left(q_{N_Q+h}-\overline q_Q^{(\mathrm{for})}\right)
\left(\widehat q_{N_Q+h}
-\overline{\widehat q}_Q^{(\mathrm{for})}\right)
}{
\left[
\displaystyle\sum_{h=1}^{N_{\mathrm f}}
\left(q_{N_Q+h}-\overline q_Q^{(\mathrm{for})}\right)^2
\right]^{1/2}
\left[
\displaystyle\sum_{h=1}^{N_{\mathrm f}}
\left(\widehat q_{N_Q+h}
-\overline{\widehat q}_Q^{(\mathrm{for})}\right)^2
\right]^{1/2}
},
\\[1mm]
\overline q_Q^{(\mathrm{for})}
&=\frac{1}{N_{\mathrm f}}
\sum_{h=1}^{N_{\mathrm f}}q_{N_Q+h},
\qquad
\overline{\widehat q}_Q^{(\mathrm{for})}
=\frac{1}{N_{\mathrm f}}
\sum_{h=1}^{N_{\mathrm f}}\widehat q_{N_Q+h}.
\end{aligned}
\label{eq:numerical_phase3_forecast_correlation}
\end{equation}
The forecast relative error is denoted by
$e_{Q,\mathrm{rel}}^{(\mathrm{for})}$ and is defined as
\begin{equation}
e_{Q,\mathrm{rel}}^{(\mathrm{for})}
=
\frac{
\left[
\displaystyle\sum_{h=1}^{N_{\mathrm f}}
\left(q_{N_Q+h}-\widehat q_{N_Q+h}\right)^2
\right]^{1/2}
}{
\left[
\displaystyle\sum_{h=1}^{N_{\mathrm f}}q_{N_Q+h}^{2}
\right]^{1/2}
}.
\label{eq:numerical_phase3_forecast_relative_error}
\end{equation}
The coefficient $R_Q^{(\mathrm{for})}$ measures temporal-shape agreement, and
$e_{Q,\mathrm{rel}}^{(\mathrm{for})}$ measures the relative amplitude
discrepancy. Table~\ref{tab:phase3_forecast_accuracy} reports both quantities
together with the identification-to-forecast ratio
$\mathcal R_{Q/\mathrm{for}}$ defined in
Eq.~\eqref{eq:phase3_calibration_forecast_ratio}. 

\begin{table}[!htbp]
\caption{Phase~III finite-horizon solar-plant power forecasting results for
the four experiment-specific forecast horizons.}
\label{tab:phase3_forecast_accuracy}%
\begingroup
\scriptsize
\setlength{\tabcolsep}{2pt}
\begin{tabular*}{\textwidth}{@{\extracolsep\fill}lccccc}
\toprule
Reported quantity & Symbol & $\mathcal T_{\mathrm{for}}^1$
& $\mathcal T_{\mathrm{for}}^2$ & $\mathcal T_{\mathrm{for}}^3$
& $\mathcal T_{\mathrm{for}}^4$ \\
\midrule
Finite forecast horizon
& $\mathcal T_{\mathrm{for}}^i$
& $[288,335]$ & $[360,431]$ & $[310,409]$ & $[300,449]$ \\
Forecasted samples
& $N_{\mathrm f}$ & 48 & 72 & 100 & 150 \\
Paired identification samples
& $N_Q$ & 287 & 359 & 309 & 299 \\
Identification-to-forecast ratio
& $\mathcal R_{Q/\mathrm{for}}$
& $5.9792$ & $4.9861$ & $3.0900$ & $1.9933$ \\
Forecast correlation
& $R_Q^{(\mathrm{for})}$
& $0.9975$ & $0.9949$ & $0.9933$ & $0.9923$ \\
Forecast relative error
& $e_{Q,\mathrm{rel}}^{(\mathrm{for})}$
& $0.0863$ & $0.0922$ & $0.0987$ & $0.1073$ \\
Phase~III execution time
& -- & 3.7901 s & 2.2284 s & 2.1479 s & 2.1292 s \\
\botrule
\end{tabular*}
\endgroup
\end{table}

Finite-horizon forecasts of normalized solar-plant power over $48$, $72$,
$100$, and $150$ hourly steps are presented in
Figs.~\ref{fig:phase3_power_forecast_48h},
\ref{fig:phase3_power_forecast_72h},
\ref{fig:phase3_power_forecast_100h}, and
\ref{fig:phase3_power_forecast_150h}, respectively.

\begin{figure}[!htbp]
    \centering
    \includegraphics[width=0.98\textwidth]
    {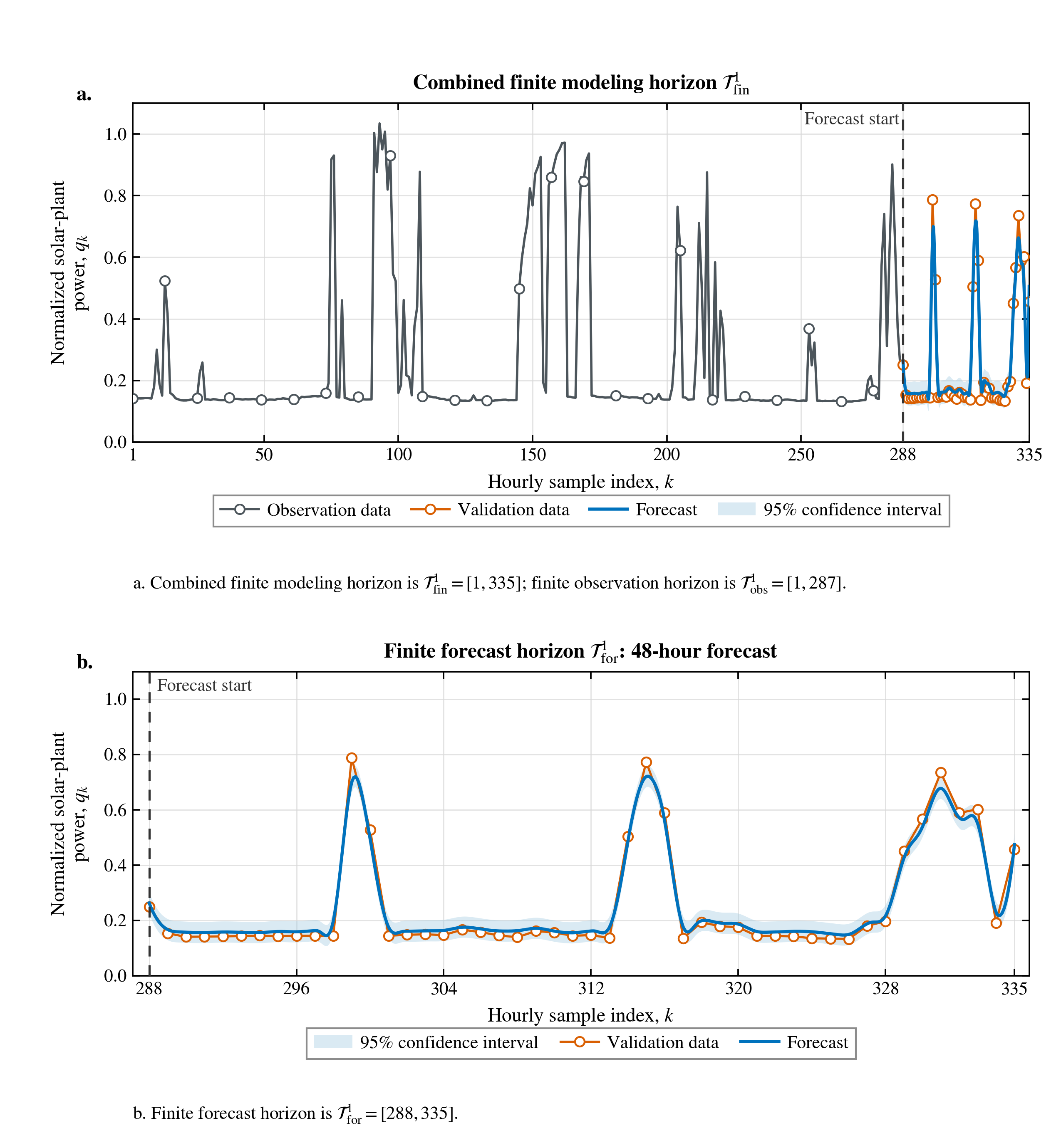}
    \caption{Phase~III finite-horizon forecast of normalized solar-plant
    power over $N_{\mathrm f}=48$ hourly steps.\\
    a. The combined observation--forecast subset is
    $\mathcal T_{\mathrm{obs}}^{1}\cup\mathcal T_{\mathrm{for}}^{1}
    =[1,\;335]\subseteq\mathcal T_{\mathrm{fin}}$, and the finite observation
    horizon is $\mathcal T_{\mathrm{obs}}^{1}=[1,\;287]$. The forecast-start
    line at $k=288$ separates the observation data from the validation
    data and their recursively computed forecasts.\\
    b. Detailed representation over the finite forecast horizon
    $\mathcal T_{\mathrm{for}}^{1}=[288,\;335]$, including the validation data,
    numerical forecast, and $95\%$ confidence interval.}
    \label{fig:phase3_power_forecast_48h}
\end{figure}
\begin{figure}[!htbp]
    \centering
    \includegraphics[width=0.98\textwidth]
    {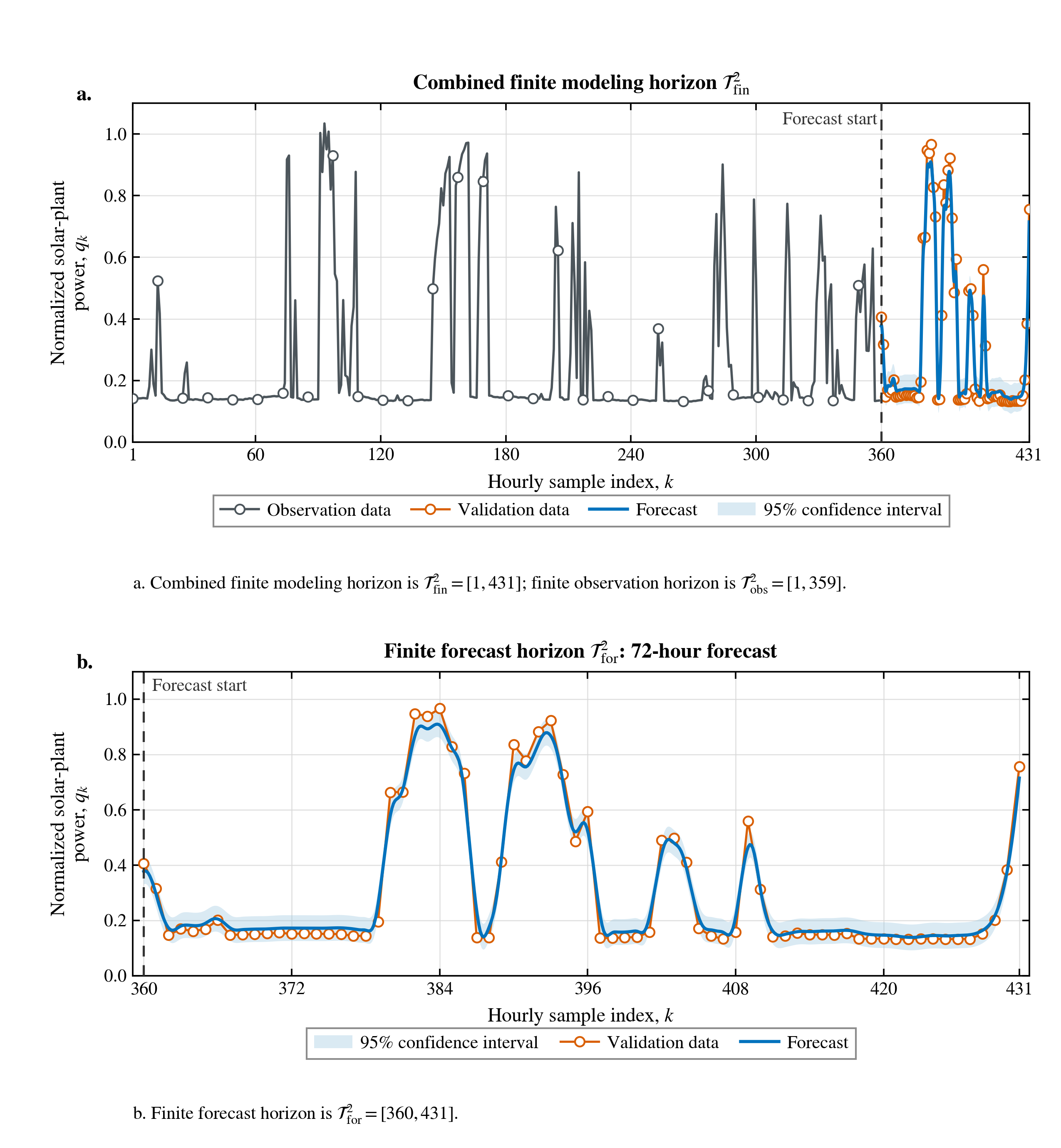}
    \caption{Phase~III finite-horizon forecast of normalized solar-plant
    power over $N_{\mathrm f}=72$ hourly steps.\\
    a. The combined observation--forecast subset is
    $\mathcal T_{\mathrm{obs}}^{2}\cup\mathcal T_{\mathrm{for}}^{2}
    =[1,\;431]\subseteq\mathcal T_{\mathrm{fin}}$, and the finite observation
    horizon is $\mathcal T_{\mathrm{obs}}^{2}=[1,\;359]$. The forecast-start
    line at $k=360$ separates the observation data from the validation
    data and their recursively computed forecasts.\\
    b. Detailed representation over the finite forecast horizon
    $\mathcal T_{\mathrm{for}}^{2}=[360,\;431]$, including the validation data,
    numerical forecast, and $95\%$ confidence interval.}
    \label{fig:phase3_power_forecast_72h}
\end{figure}
\begin{figure}[!htbp]
\centering
\includegraphics[width=0.98\textwidth]
{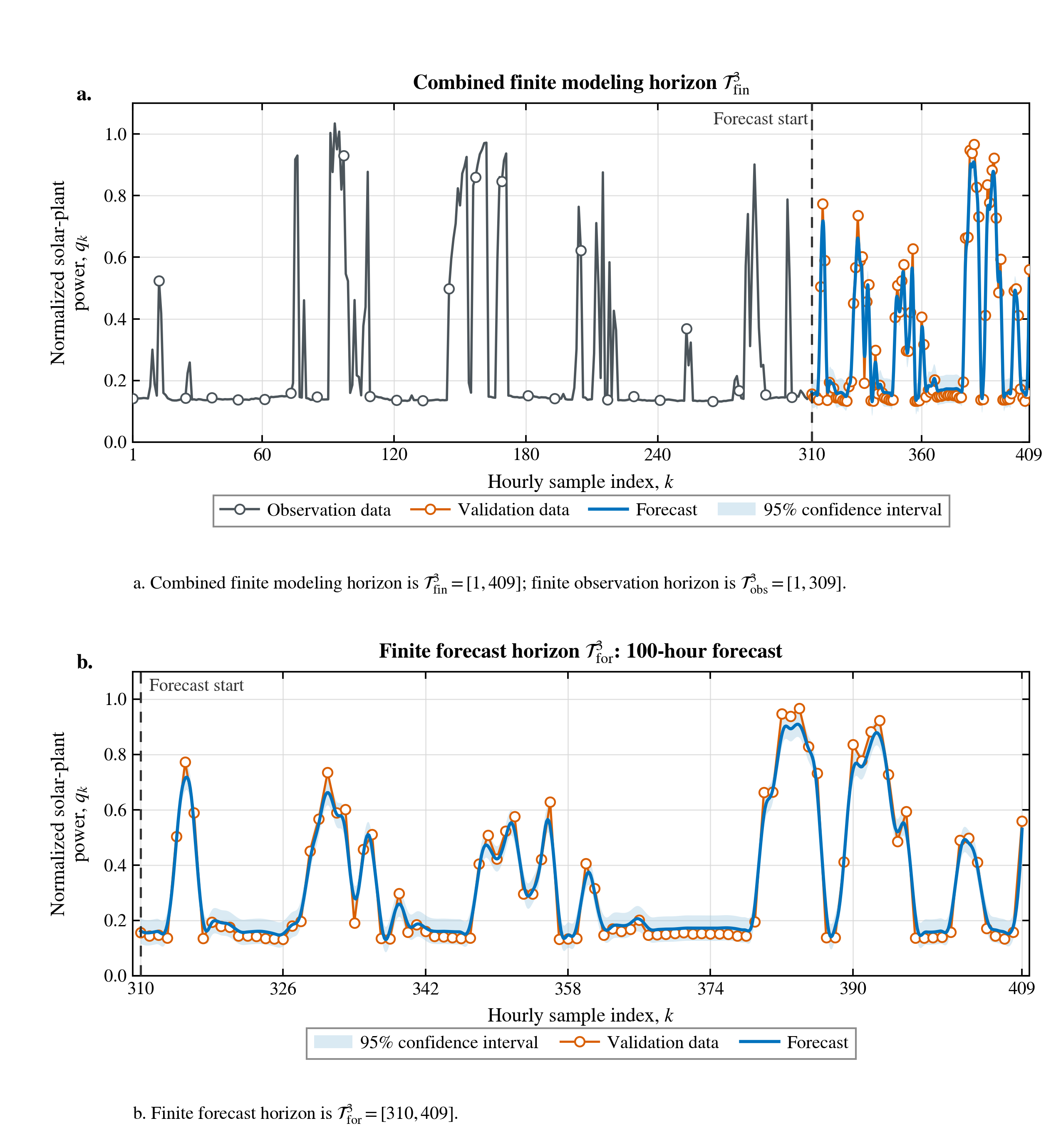}
\caption{Phase~III finite-horizon forecast of normalized solar-plant
power over $N_{\mathrm f}=100$ hourly steps.\\
a. The combined observation--forecast subset is
$\mathcal T_{\mathrm{obs}}^{3}\cup\mathcal T_{\mathrm{for}}^{3}
=[1,\;409]\subseteq\mathcal T_{\mathrm{fin}}$, and the finite observation
horizon is $\mathcal T_{\mathrm{obs}}^{3}=[1,\;309]$. The forecast-start
line at $k=310$ separates the observation data from the validation
data and their recursively computed forecasts.\\
b. Detailed representation over the finite forecast horizon
$\mathcal T_{\mathrm{for}}^{3}=[310,\;409]$, including the validation
data, numerical forecast, and $95\%$ confidence interval.}
\label{fig:phase3_power_forecast_100h}
\end{figure}
\begin{figure}[!htbp]
\centering
\includegraphics[width=0.98\textwidth]
{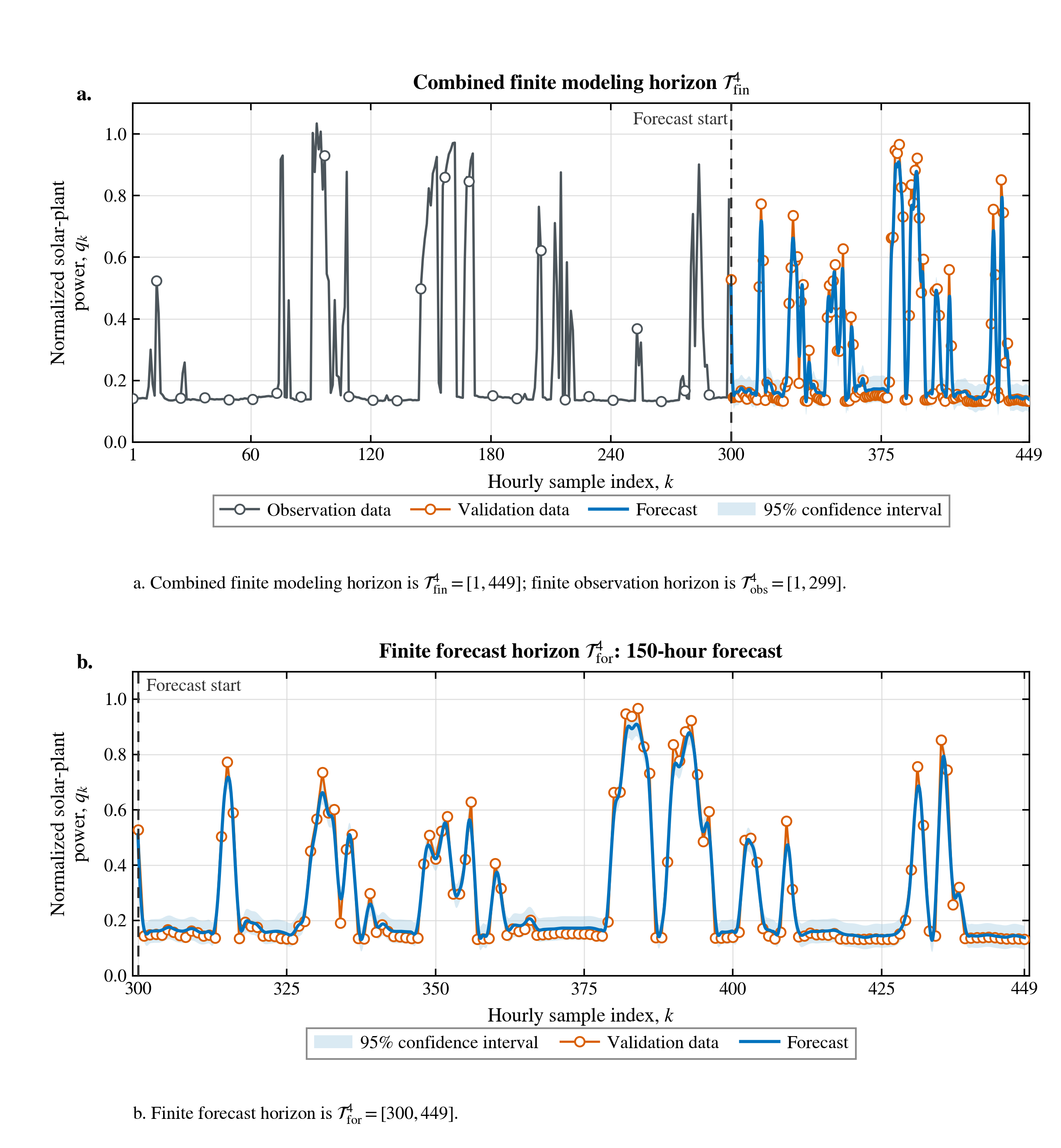}
\caption{Phase~III finite-horizon forecast of normalized solar-plant
power over $N_{\mathrm f}=150$ hourly steps.\\
a. The combined observation--forecast subset is
$\mathcal T_{\mathrm{obs}}^{4}\cup\mathcal T_{\mathrm{for}}^{4}
=[1,\;449]\subseteq\mathcal T_{\mathrm{fin}}$, and the finite observation
horizon is $\mathcal T_{\mathrm{obs}}^{4}=[1,\;299]$. The forecast-start
line at $k=300$ separates the observation data from the validation
data and their recursively computed forecasts.\\
b. Detailed representation over the finite forecast horizon
$\mathcal T_{\mathrm{for}}^{4}=[300,\;449]$, including the validation
data, numerical forecast, and $95\%$ confidence interval.}
\label{fig:phase3_power_forecast_150h}
\end{figure}

All four experiments achieve a forecast correlation above $0.99$, with
$R_Q^{(\mathrm{for})}$ ranging from $0.9923$ to $0.9975$. Thus, the predicted
power trajectories preserve the temporal shape of the validation data even
for the $150$-hour recursion. The relative error increases gradually from
$0.0863$ for the $48$-hour forecast to $0.1073$ for the $150$-hour forecast,
while $\mathcal R_{Q/\mathrm{for}}$ decreases from $5.9792$ to $1.9933$.
This trend is consistent with the more demanding combination of a longer
recursive horizon and fewer identification samples per forecasted step; it
does not, by itself, establish a causal dependence on the ratio. Overall, the
results show that the selected scalar models can propagate the physically
reconstructed Phase~II channel information across all four finite forecast horizons while
maintaining strong temporal agreement and moderate relative amplitude error.

The numerical Phase~III results complete the proposed sequence of
decomposition, Hankel coefficient-space identification, physical-space
inverse calibration, and finite-horizon forecasting. Phase~I provides the
selected Hankel--Koopman representation, Phase~II supplies the physical
experimental channels reconstructed from the simulated coefficient dynamics,
and Phase~III converts these channel inputs into forecasts of quantity of
interest $Q_{\mathrm I}$. This section
consolidates the theoretical and numerical contributions of the complete
three-phase data-twin modeling and forecasting methodology introduced in this paper.

\section{Conclusion}\label{conclusion}

This paper developed an end-to-end methodology for the decomposition,
identification, reconstruction, and finite-horizon forecasting of coupled
experimental data. Phase~I constructs an energy-ranked Hankel--Koopman
representation of the measured channels; Phase~II identifies and simulates
their nonlinear reduced dynamics in Hankel coefficient space and reconstructs
the corresponding physical channels; and Phase~III uses these reconstructed
channels to drive the recursive forecast of a coupled quantity of interest.

The mathematical analysis accompanies each computational phase.
Theorem~\ref{thm:finite_horizon_hk_energy} establishes the finite-horizon
Hankel--Koopman energy decomposition;
Theorem~\ref{thm:nlarx_existence_uniqueness_inverse_stability} establishes
existence, uniqueness, and inverse stability for the coefficient-space NLARX
model and its physical-space reconstruction; and
Theorem~\ref{thm:phase3_composed_forecast_stability} establishes the
finite-horizon stability of the composed scalar-output forecast. Thus, the
algorithmic constructions are connected to explicitly stated hypotheses and
proven finite-horizon properties, while the numerical conclusions are
supported by the reported experiments.

The originality of the work lies in two new methodological components and
their end-to-end coupling. First, the Hankel--Koopman finite-horizon energy
decomposition (HKFED) constructs orthogonal energy directions and linked
spectral--spatial--temporal triplets, which are ranked according to
finite-horizon energy and persistence. Pareto selection balances reconstruction
accuracy, matrix similarity, and reduced dimension, while anti-diagonal
recovery maps the selected representation to the physical measurement space.
Second, the inverse-calibrated coupled multi-output NLARX methodology identifies
and simulates the nonlinear dynamics in Hankel coefficient space, whereas its
Tikhonov-based and Pareto-based selection procedures evaluate the reconstructed
physical channels. Phase~III employs an established recursive NLARX forecasting
structure, but introduces within the complete framework the transfer mechanism
through which the physical channel trajectories reconstructed from the
simulated Hankel coefficients become exogenous inputs to the
quantity-of-interest forecast.

The framework was evaluated using a solar-power-plant case study in which
coupled meteorological measurements served as physical input channels and the
generated solar power constituted the quantity of interest. For forecast
horizons of $48$, $72$, $100$, and $150$ hours, the correlations range from
$0.9923$ to $0.9975$, while the relative errors range from $0.0863$ to
$0.1073$. Correlations remain above $0.99$ as
$\mathcal R_{Q/\mathrm{for}}$ decreases from $5.9792$ to $1.9933$. This
behavior is consistent with the increasingly demanding combination of a longer
recursive horizon and fewer identification samples per forecasted step,
although it does not by itself establish a causal dependence on this ratio.

The complete workflow maintains traceability from the experimental record to
the selected Koopman-energy triplets, simulated reduced dynamics,
reconstructed physical channels, NLARX regressors, model-selection objectives,
and final forecast. Modal energies quantify the retained latent directions,
Koopman-energy modes map the latent coordinates to physical measurements,
nonlinear feature maps represent delayed interactions, and Pareto objectives
document the model-selection process. Through problem-specific choices of
normalization, Hankel depth, admissible model families, calibration and
forecast windows, and quantity of interest, the methodology can be adapted to
other coupled nonlinear processes involving finite predictive horizons.
Future work will focus on extending the proposed framework to a broader range of nonlinear applications, particularly those involving two-dimensional spatial domains.



\section*{Declarations}

\subsection*{Use of AI-Assisted Technologies}

During the preparation of this manuscript, the authors used ChatGPT
(OpenAI) solely to assist with English-language drafting and editing based on
author-provided scientific content, LaTeX table construction, consolidation of
mathematical notation, and section-consistency checks. The scientific concepts,
mathematical formulations and proofs, algorithms, computational procedures,
data analysis, numerical results, and conclusions were developed and verified
exclusively by the authors. The authors reviewed and approved all AI-assisted
text and take full responsibility for the final content of the manuscript.

\subsection*{Author Contributions}
D.A.B. conceived the study, developed the methodology and mathematical
analysis, wrote the original manuscript, reviewed and edited the manuscript,
developed the software, and performed the validation. M.T. conducted the
investigation, curated the data, contributed engineering expertise in solar
photovoltaic systems, contributed to the software development, and performed
the validation. Both authors reviewed and approved the final manuscript.

\end{document}